\documentclass[a4paper,12pt]{amsart}
\usepackage{amsmath}
\usepackage{amssymb}
\usepackage{amsthm,amsxtra}
\usepackage{latexsym, mathrsfs, color, soul}
\usepackage{cite}
\usepackage{enumerate}
\usepackage{enumitem}
\usepackage[mathscr]{eucal}
\usepackage{multicol}
\usepackage{multirow}
\usepackage{adjustbox}
\usepackage{float}
\usepackage{graphicx}
\usepackage{mathtools}
\usepackage[hyphens]{url}
\usepackage[colorlinks=true]{hyperref}
\usepackage{cleveref}
\usepackage{tikz-cd}
\usepackage{tkz-graph}
\usepackage{tkz-berge}
\usetikzlibrary{positioning,arrows,shapes.geometric,trees,decorations.text}
\usepackage[T1]{fontenc}
\usepackage{lmodern}
\usepackage[top=2in, bottom=1.5in, left=1.1in, right=1.1in]{geometry}

\newcommand{\sr}[3]{\underset{\text{#2}}{\mbox{#1}}}

\makeatletter
\renewcommand{\@makefnmark}{%
  \hbox{\kern0.15em\@textsuperscript{\normalfont\@thefnmark}}%
}
\makeatother

\newtheorem{Def}{Definition}[section]
\newtheorem{Th}[Def]{Theorem}
\newtheorem{Ex}[Def]{Example}
\newtheorem{Lemma}[Def]{Lemma}
\newtheorem{Prop}[Def]{Proposition}
\newtheorem{Cor}[Def]{Corollary}
\newtheorem{Rem}[Def]{Remark}

{}
\newtheorem{Prob}[Def]{Problem}
\newtheorem{Conj}[Def]{Conjecture}

\newtheorem{Fact}[Def]{Fact}

\DeclareMathOperator{\S1k}{S_1^\kappa}
\DeclareMathOperator{\Sk}{S_{< \kappa}^\kappa}
\DeclareMathOperator{\Uk}{U_{< \kappa}^\kappa}

\DeclareMathOperator{\bk}{\mathfrak{b}_\kappa}
\DeclareMathOperator{\dk}{\mathfrak{d}_\kappa}
\DeclareMathOperator{\pk}{\mathfrak{p}_\kappa}
\DeclareMathOperator{\dlk}{\mathfrak{d}_\kappa^{<\kappa}}
\DeclareMathOperator{\fink}{Q_\kappa}
\DeclareMathOperator{\maxk}{\max\nolimits_{<\kappa}}
\DeclareMathOperator{\lessk}{\mathord{<}\kappa}
\DeclareMathOperator{\ink}{in(\kappa)}

\DeclareMathOperator{\cf}{cf}
\DeclareMathOperator{\non}{non}
\DeclareMathOperator{\cov}{cov}
\DeclareMathOperator{\dom}{dom}

\DeclareMathAlphabet{\mybb}{U}{BOONDOX-ds}{m}{n}

\begin{document}
\title[Selection principles in an uncountable setting]{Selection principles and the Scheepers diagram in an uncountable setting}
\author[ N. Alam, D. Chandra ]{ Nur Alam$^\dag$, Debraj Chandra$^*$ }
\newcommand{\acr}{\newline\indent}
\address{\llap{\dag\,}Department of Mathematics, Mirza Ghalib College, Gaya, Bihar-823001, India}
\email{nurrejwana@gmail.com}
\address{\llap{*\,}Department of Mathematics, University of Gour Banga, Malda-732103, West Bengal, India}
\email{debrajchandra1986@gmail.com}

\subjclass{Primary: 54D20; Secondary: 03E17, 03E55, 54A25, 54A35}

\maketitle

\begin{abstract}
We continue the study of selection principles in the uncountable setting initiated in \cite{kw20} and further developed in \cite{ucsp}. We investigate several problems posed in \cite{ucsp} concerning the Scheepers diagram, the $\kappa$-Hurewicz Conjecture, and related questions on $\kappa$-Menger and $K_\kappa$-spaces. We determine the Scheepers diagram in the uncountable setting up to three unresolved implications and compute the critical cardinalities of several properties occurring in the diagram. For weakly compact $\kappa$, we show that there is a model of ZFC in which the $\kappa$-Hurewicz Conjecture fails; in particular, it cannot hold under GCH at $\kappa$. Moreover, any model in which $\kappa$ is weakly compact and the $\kappa$-Hurewicz Conjecture holds must satisfy $2^{\mathfrak{b}_\kappa} = 2^\kappa$. We also study the existence of $\bk$-scale sets which are $K_\kappa$-spaces and derive a necessary tree-theoretic condition for such an example. Under the additional assumption of the Perfect Subtree Property, no $\mathfrak{b}_\kappa$-scale set is a $K_\kappa$-space. Consequently, under this assumption, every $\mathfrak{b}_\kappa$-scale set yields an alternative example of a $\kappa$-Menger space which is not a $K_\kappa$-space.
\end{abstract}

\smallskip

\noindent{\bf\keywordsname{}:} {Selection principles, uncountable setting, Scheepers diagram, critical cardinalities, $\kappa$-Menger, $\kappa$-Scheepers, $\kappa$-Hurewicz, $\kappa$-Rothberger, $K_\kappa$-spaces, $\mathfrak{b}_\kappa$-scales.}

\section{Definitions and Terminology}
\label{sec-dt}
Throughout the paper, $\kappa$ denotes an uncountable cardinal, and by a space we mean a topological space, unless otherwise stated. For undefined notions and terminology, we refer the reader to \cite{ucsp, kw20, Engelking, jech03}.

\subsection{Generalized notions of covers and selection principles}
\label{sec-csp}
Recall from \cite{ucsp} (see also \cite{kw20}) the following generalized notions of covers and selection principles. Let $\mathcal{U}$ be a cover of a space $X$. The family $\mathcal{U}$ is called a $\kappa$-cover of $X$ if $X\notin \mathcal{U}$ and for every $F\in [X]^{<\kappa}$, there exists $U\in \mathcal{U}$ such that $F\subseteq U$. The family $\mathcal{U}$ is called a $\gamma_\kappa$-cover of $X$ if it has size $\kappa$ and for each $x\in X$, the set $\{U\in \mathcal{U} : x\notin U\}$ has size less than $\kappa$. We use $\mathcal{O}_\kappa$, $\Omega_\kappa$, and $\Gamma_\kappa$ to denote, respectively, the classes of all open covers of size $\kappa$, all open $\kappa$-covers of size $\kappa$, and all open $\gamma_\kappa$-covers of spaces. If $\mathcal{A}$ is a class of covers of spaces, then $\mathcal{A}(X)$ denotes the family of all covers of $X$ belonging to $\mathcal{A}$. Let $\mathcal{A}, \mathcal{B}\in \{\mathcal{O}_\kappa, \Omega_\kappa, \Gamma_\kappa\}$. With this notation, the generalized selection principles for a space $X$ are defined as follows.

\begin{enumerate}[leftmargin=2cm,itemsep=0.3em]
\item[$\S1k(\mathcal{A},\mathcal{B})$:] For each $(\mathcal{U}_\alpha : \alpha< \kappa) \in \mathcal{A}(X)^\kappa$, there are $U_\alpha\in \mathcal{U}_\alpha$ such that $\{U_\alpha : \alpha< \kappa\}\in \mathcal{B}(X)$.

\item[$\Sk(\mathcal{A},\mathcal{B})$:] For each $(\mathcal{U}_\alpha : \alpha< \kappa) \in \mathcal{A}(X)^\kappa$, there are $\mathcal{V}_\alpha\in [\mathcal{U}_\alpha]^{<\kappa}$ such that $\bigcup_{\alpha< \kappa} \mathcal{V}_\alpha \in \mathcal{B}(X)$.

\item[$\Uk(\mathcal{A},\mathcal{B})$:] For each $(\mathcal{U}_\alpha : \alpha< \kappa) \in \mathcal{A}(X)^\kappa$ with no $\mathcal{U}_\alpha$ has a subcover of size less than $\kappa$, there are $\mathcal{V}_\alpha\in [\mathcal{U}_\alpha]^{<\kappa}$ such that $\{\bigcup \mathcal{V}_\alpha : \alpha< \kappa\}\in \mathcal{B}(X)$.
\end{enumerate}

\begin{Fact}[{\cite[Fact~2.4]{ucsp}}]
\label{fact1}
Let $\mathcal{U}$ be a $\kappa$-cover of a space $X$. Then for every $A\in [X]^{<\kappa}$, there are $\kappa$ many sets $U\in \mathcal{U}$ such that $A\subseteq U$.
\end{Fact}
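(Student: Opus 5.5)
The plan is a direct counting argument by contradiction. It uses only the two defining features of a $\kappa$-cover: every set of size less than $\kappa$ is contained in some member, and $X\notin\mathcal{U}$. Fix $A\in[X]^{<\kappa}$ and put
\[
\mathcal{W}=\{U\in\mathcal{U} : A\subseteq U\}.
\]
I will assume that $|\mathcal{W}|<\kappa$ and derive a contradiction. This shows that $\mathcal{W}$ has at least $\kappa$ elements, which is what ``$\kappa$ many'' means here.

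The key step is to enlarge $A$ so that it escapes every member of $\mathcal{W}$. Since $X\notin\mathcal{U}$, every $U\in\mathcal{W}$ is a proper subset of $X$. So, using the axiom of choice, we can pick a point $x_U\in X\setminus U$ for each $U\in\mathcal{W}$. Let
\[
B=A\cup\{x_U : U\in\mathcal{W}\}.
\]
Then $|B|\le|A|+|\mathcal{W}|$. Both summands are cardinals below $\kappa$. Their sum is either finite or equal to the larger of the two, so in either case it is below $\kappa$, because $\kappa$ is an (uncountable) cardinal. Hence $B\in[X]^{<\kappa}$.

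Since $\mathcal{U}$ is a $\kappa$-cover, there is $V\in\mathcal{U}$ with $B\subseteq V$. In particular $A\subseteq V$, so $V\in\mathcal{W}$. But then $x_V\in B\subseteq V$, which contradicts the choice $x_V\notin V$. Therefore $|\mathcal{W}|\ge\kappa$.

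There is no real obstacle here. The only points needing care are checking that $|B|<\kappa$, which needs no regularity of $\kappa$ since only two cardinals are being added, and noting that the argument also covers the case $\mathcal{W}=\emptyset$. In that case $B=A$, and the $\kappa$-cover property alone already produces a member of $\mathcal{W}$.
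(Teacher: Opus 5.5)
Your proof is correct and complete. Adding one witness $x_U\in X\setminus U$ for each of the fewer than $\kappa$ members $U\supseteq A$, and then applying the $\kappa$-cover property to the enlarged set, is the standard argument for this fact; the paper only quotes the fact from \cite{ucsp} and gives no proof of its own, so there is no competing route to compare.
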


\begin{Lemma}
\label{lemincl}
For a regular cardinal $\kappa$, every $\gamma_\kappa$-cover of a space is a $\kappa$-cover of size $\kappa$.
\end{Lemma}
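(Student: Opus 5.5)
The plan is to verify the two clauses in the definition of a $\kappa$-cover directly. The size requirement costs nothing: a $\gamma_\kappa$-cover has size $\kappa$ by definition. So let $\mathcal{U}$ be a $\gamma_\kappa$-cover of $X$. The remaining work is to show that every $F\in[X]^{<\kappa}$ lies inside a single member of $\mathcal{U}$, and to deal with the side condition $X\notin\mathcal{U}$.

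For the covering clause, fix $F\in[X]^{<\kappa}$. For each $x\in F$ put $\mathcal{B}_x=\{U\in\mathcal{U}: x\notin U\}$; by the $\gamma_\kappa$-property, $|\mathcal{B}_x|<\kappa$. Let $\mathcal{B}=\bigcup_{x\in F}\mathcal{B}_x$. This is a union of fewer than $\kappa$ sets, each of size less than $\kappa$. Since $\kappa$ is regular, $|\mathcal{B}|<\kappa$; this is the one place regularity is used, and it is exactly what the argument needs. As $|\mathcal{U}|=\kappa$, the set $\mathcal{U}\setminus\mathcal{B}$ is nonempty, and in fact has size $\kappa$. Any $U\in\mathcal{U}\setminus\mathcal{B}$ contains every $x\in F$, so $F\subseteq U$. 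For $F=\emptyset$ the claim is immediate.

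The step I expect to need care is the condition $X\notin\mathcal{U}$. The definition of a $\gamma_\kappa$-cover given above does not explicitly exclude $X$ as a member. I would follow the standing convention of \cite{ucsp}, that the covers under consideration are nontrivial, so that $X\notin\mathcal{U}$ is part of the hypothesis, and I would state this convention explicitly in the proof. If one prefers not to rely on it, note that removing the single element $X$ from $\mathcal{U}$ leaves a family of size $\kappa$ that is still a $\gamma_\kappa$-cover. The argument above then applies verbatim to this smaller family, and it is a $\kappa$-cover of size $\kappa$.
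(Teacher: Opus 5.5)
Your proof is correct and is the routine verification the paper has in mind; the lemma is stated there without proof. Regularity keeps $\bigcup_{x\in F}\{U\in\mathcal{U} : x\notin U\}$ of size less than $\kappa$, so some member of the size-$\kappa$ family $\mathcal{U}$ contains $F$.

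You are also right that the clause $X\notin\mathcal{U}$ must come from a convention. Under the literal definitions, a $\gamma_\kappa$-cover with $X$ adjoined is not a $\kappa$-cover, so the convention is not optional. Your fallback only shows that $\mathcal{U}\setminus\{X\}$ is a $\kappa$-cover. That is all the implications built on this lemma need; compare the proof of Theorem~\ref{thm7}, where $X$ is removed explicitly.
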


Hence for a regular cardinal $\kappa$, the inclusions $\Gamma_\kappa(X) \subseteq \Omega_\kappa(X) \subseteq \mathcal{O}_\kappa(X)$ hold.

\begin{Rem}
\label{rem9}
Let $\kappa$ be a regular cardinal and let $X$ be a space with $|X|< \kappa$. Then $\Omega_\kappa(X) = \Gamma_\kappa(X) = \emptyset$. Moreover, if $\kappa$ is strongly inaccessible, then $\mathcal{O}_\kappa(X) = \emptyset$. Consequently, when $\kappa$ is strongly inaccessible, all the selection principles considered here are satisfied vacuously by every space $X$ with $|X|< \kappa$. Therefore, when considering nontrivial instances of these selection principles, we restrict our attention to spaces of cardinality at least $\kappa$.
\end{Rem}

Recall from \cite{ucsp} (see also \cite{kw20}) that a space $X$ is said to be $\kappa$-Hurewicz (respectively, $\kappa$-Menger and $\kappa$-Rothberger) if it satisfies $\Uk(\mathcal{O}_\kappa, \Gamma_\kappa)$ (respectively, $\Sk(\mathcal{O}_\kappa, \mathcal{O}_\kappa)$ and $\S1k(\mathcal{O}_\kappa, \mathcal{O}_\kappa)$). Similarly, a space $X$ is said to be $\kappa$-Scheepers if it satisfies $\Uk(\mathcal{O}_\kappa, \Omega_\kappa)$. A space $X$ is said to be $\kappa$-compact if every open cover of $X$ has a subcover of size less than $\kappa$. A space $X$ is said to be $\kappa$-Lindel\"{o}f if every open cover of $X$ has a subcover of size at most $\kappa$. A space $X$ is said to be $K_\kappa$ if it is union of $\kappa$ many $\kappa$-compact spaces.

\subsection{$\kappa$-Cantor space and $\kappa$-Baire space}
\label{sec-cb}
Let $\kappa$ be a regular cardinal, and let $(X_\alpha : \alpha< \kappa)$ be a family of discrete topological spaces. The $\kappa$-product topology on $\prod_{\alpha< \kappa} X_\alpha$ is the topology generated by the sets $\prod_{\alpha< \kappa} U_\alpha$, where $U_\alpha \subseteq X_\alpha$ for each $\alpha< \kappa$ and $U_\alpha = X_\alpha$ for all but less than $\kappa$ many $\alpha< \kappa$; see \cite{ucsp}. In this setting, the $\kappa$-product topology is precisely the $\kappa$-box topology; see, for example, \cite{cg16}.

The space $2^\kappa$, endowed with the $\kappa$-product topology, is called the $\kappa$-Cantor space. Identifying each subset of $\kappa$ with its characteristic function in $2^\kappa$, we equip $P(\kappa)$ with the topology induced by this identification. Similarly, the space $\kappa^\kappa$, endowed with the $\kappa$-product topology, is called the $\kappa$-Baire space. We identify each $a\in [\kappa]^\kappa$ with the increasing enumeration of its elements, regarded as an element of $\kappa^\kappa$. Under these identifications, the topologies on $[\kappa]^\kappa$ inherited from $2^\kappa$ and $\kappa^\kappa$ coincide.

A space $X$ is said to be a $P_\kappa$-space \cite{ucsp} if intersection of less than $\kappa$-many open subsets of $X$ is open. Both the $\kappa$-Cantor space and the $\kappa$-Baire space are zero-dimensional Tychonoff $P_\kappa$-spaces. Moreover, if $\kappa^{<\kappa} = \kappa$, then both spaces have bases of size $\kappa$. A cardinal $\kappa$ is weakly compact if $2^\kappa$ is $\kappa$-compact \cite[Theorem~8.23]{cn74}. A weakly compact cardinal $\kappa$ is strongly inaccessible; that is, $\kappa$ is regular and $2^\alpha< \kappa$ for every $\alpha< \kappa$. In particular, $\kappa^{<\kappa} = \kappa$.

\subsection{Cardinal characteristics and $\bk$-scales}
\label{sec-car}
Identifying the members of $[\kappa]^\kappa$ with their increasing enumerations, for $a,b\in [\kappa]^\kappa$, we write $a\leq_\kappa^* b$ if the set $\{\alpha< \kappa : a(\alpha)> b(\alpha)\}$ has size less than $\kappa$, and write $a\leq_\kappa^\kappa b$ if the set $\{\alpha< \kappa : a(\alpha)\leq b(\alpha)\}$ has size $\kappa$. Thus $a\not \leq_\kappa^* b$ if and only if $b\leq_\kappa^\kappa a$.

A family $A\subseteq [\kappa]^\kappa$ is bounded if there exists $b\in [\kappa]^\kappa$ such that $a\leq_\kappa^* b$ for every $a\in A$. The generalized bounding number $\bk$ is the least cardinality of an unbounded subset of $[\kappa]^\kappa$. A family $A\subseteq [\kappa]^\kappa$ is dominating if for every $x\in [\kappa]^\kappa$, there exists $a\in A$ such that $x\leq_\kappa^* a$. The generalized dominating number $\dk$ is the least cardinality of such a dominating family. Equivalently, the notions of boundedness and domination, as well as the cardinals $\bk$ and $\dk$, may be defined using $\kappa^\kappa$ in place of $[\kappa]^\kappa$. The systematic study of the cardinal characteristics $\bk$ and $\dk$ was initiated by Cummings and Shelah \cite{cs95}.

For $a, b\in [\kappa]^\kappa$, we write $a\subseteq_\kappa^* b$ if $|a\setminus b|<\kappa$. A family $A\subseteq [\kappa]^\kappa$ is called $\kappa$-centered if $|\bigcap B| = \kappa$ for every nonempty $B\in [A]^{<\kappa}$. A $\kappa$-pseudointersection of $A\subseteq [\kappa]^\kappa$ is an element $b\in [\kappa]^\kappa$ such that $b\subseteq_\kappa^* a$ for every $a\in A$. The cardinal $\pk$ is defined as the least cardinality of a $\kappa$-centered family $A\subseteq [\kappa]^\kappa$ having no $\kappa$-pseudointersection.

A family $X = \{x_\alpha : \alpha< \mathfrak{b}_\kappa\} \subseteq [\kappa]^\kappa$ is called a $\mathfrak{b}_\kappa$-scale if it is unbounded and $x_\alpha \leq_\kappa^* x_\beta$ for all $\alpha< \beta< \mathfrak{b}_\kappa$. Such scales exist in ZFC. Moreover, $\mathfrak{b}_\kappa$ is regular and $\kappa< \mathfrak{b}_\kappa$. Let $\fink = [\kappa]^{<\kappa}$. A set of the form $X\cup \fink$, where $X$ is a $\mathfrak{b}_\kappa$-scale, is called a $\mathfrak{b}_\kappa$-scale set.

For a topological property $Q$, the critical cardinality of $Q$, denoted by $\non(Q)$, is defined by \[\non(Q) = \min \{|X| : X\subseteq P(\kappa) \text{ and } X\text{ does not satisfy } Q\}.\]

\section{Background and Motivation}
\label{sec-bm}
In \cite{coc1} (see also \cite{coc2}), Scheepers initiated the systematic
study of selection principles in general and set-theoretic topology in the classical countable setting. The investigation of their uncountable
analogues was initiated by Korch and Weiss in \cite{kw20} and was further
developed in \cite{ucsp}. In the countable setting, Menger's Conjecture,
the Hurewicz Conjecture, and the Hurewicz Problem, together with their
solutions and related developments, have played an important role in the
theory of selection principles and have been studied extensively; see, for
example, \cite{bs01, bt06, fm88, ot11, tsaban11, tsaban08}.

Motivated by these classical problems, in \cite{ucsp}, the authors considered their uncountable analogues, namely, the $\kappa$-Menger Conjecture, the $\kappa$-Hurewicz Conjecture, and the $\kappa$-Hurewicz Problem. For a weakly compact cardinal $\kappa$, the $\kappa$-Menger Conjecture asserts that a subspace of $P(\kappa)$ is a $K_\kappa$-space if and only if it is $\kappa$-Menger, whereas the $\kappa$-Hurewicz Conjecture asserts that a subspace of $P(\kappa)$ is a $K_\kappa$-space if and only if it is $\kappa$-Hurewicz. The $\kappa$-Hurewicz Problem asks whether there exists a $\kappa$-Menger subspace of $P(\kappa)$ which is not $\kappa$-Hurewicz. Assuming that $\kappa$ is weakly compact, the authors refuted the $\kappa$-Menger Conjecture and answered the $\kappa$-Hurewicz Problem affirmatively. The $\kappa$-Hurewicz Conjecture, however, was left unresolved, leading to the following question: Is the $\kappa$-Hurewicz Conjecture consistent with ZFC + $\kappa$ is weakly compact? (see \cite[Problem~8.1]{ucsp})

In connection, the authors also posed the following questions:
\begin{enumerate}
\item Is it consistent with ZFC + $\kappa$ is weakly compact that there exists a $\bk$-scale set which is a $K_\kappa$-space? (see \cite[Problem~8.2]{ucsp})

\item Is there an alternative construction of a $\kappa$-Menger space which is not a $K_\kappa$-space, different from the construction used in the solution of the $\kappa$-Hurewicz Problem? (see \cite[Problem~8.3]{ucsp})
\end{enumerate}

Another central direction in the classical theory of selection principles is the study of the Scheepers diagram and the critical cardinalities of the selection properties occurring in it; see, for example, \cite{coc2, tsaban24, peng23}. In the uncountable setting, a few implications among the corresponding generalized selection principles were established in \cite{ucsp}, where the critical cardinalities of some of these properties were also determined. In view of the classical Scheepers diagram, they asked how the Scheepers diagram behaves in the uncountable setting and whether the remaining properties can be separated, possibly consistently (see \cite[Problem~8.4]{ucsp}). Thus understanding the implication structure and possible separations among uncountable selection principles, together with their critical cardinalities, forms a natural continuation of the study initiated in \cite{kw20, ucsp}.

In this paper, we continue the study of selection principles in the uncountable setting initiated in \cite{kw20,ucsp}, with particular emphasis on Problems~8.1--8.4 of \cite{ucsp}. In response to \cite[Problem~8.4]{ucsp}, we determine the Scheepers diagram in the uncountable setting up to three unresolved implications (see Figure~\ref{sdu}). More precisely, it remains open whether any of the implications $\Uk(\mathcal{O}_\kappa, \Gamma_\kappa) \rightarrow \Sk(\Gamma_\kappa, \Omega_\kappa)$, $\Uk(\mathcal{O}_\kappa, \Omega_\kappa) \rightarrow \Sk(\Gamma_\kappa, \Omega_\kappa)$, and $\Sk(\Gamma_\kappa, \Omega_\kappa) \rightarrow \Sk(\Omega_\kappa, \Omega_\kappa)$ holds. In connection with this analysis, we determine the critical cardinalities of $\Sk(\Gamma_\kappa, \Omega_\kappa)$, $\Uk(\mathcal{O}_\kappa, \Omega_\kappa)$, $\S1k(\Gamma_\kappa, \Omega_\kappa)$, $\S1k(\Gamma_\kappa, \mathcal{O}_\kappa)$, and $\S1k(\mathcal{O}_\kappa, \mathcal{O}_\kappa)$. The critical cardinalities of $\Sk(\Omega_\kappa, \Omega_\kappa)$ and $\S1k(\Omega_\kappa, \Omega_\kappa)$ remain open.

In Section~\ref{sec-hc}, we investigate \cite[Problem~8.1]{ucsp}. We show that there is a model of ZFC in which $\kappa$ is weakly compact and the $\kappa$-Hurewicz Conjecture fails. In particular, the $\kappa$-Hurewicz Conjecture cannot hold under GCH at $\kappa$, that is, when $2^\kappa=\kappa^+$. Although this does not settle \cite[Problem~8.1]{ucsp}, it yields a necessary condition for a positive answer: any model in which $\kappa$ is weakly compact and the $\kappa$-Hurewicz Conjecture holds must satisfy $2^{\bk} = 2^\kappa$.

Finally, in Section~\ref{sec-fp}, we consider \cite[Problems~8.2 and~8.3]{ucsp}. We derive a necessary tree-theoretic condition for a positive answer to \cite[Problem~8.2]{ucsp}. As a consequence, under the additional assumption of the Perfect Subtree Property \cite{hm23}, we obtain a negative answer to \cite[Problem~8.2]{ucsp}; the question under weak compactness alone remains open. Moreover, under the Perfect Subtree Property, every $\bk$-scale set yields an alternative example of a $\kappa$-Menger space which is not a $K_\kappa$-space, and hence gives a positive answer to \cite[Problem~8.3]{ucsp} under this additional assumption.

\section{The Scheepers diagram in the uncountable setting}
\label{sec-sd}
In this section, we consider the following problem posed in \cite{ucsp}.

\begin{Prob}[{\cite[Problem~8.4]{ucsp}}]
\label{prob9}
What is the appearance of the Scheepers diagram in the uncountable case? How about the separations of other properties (even consistently)?
\end{Prob}

\subsection{Implications and equivalences}
\label{sec-ie}
We do not consider the selection principles $\Sk(\mathcal{O}_\kappa, \Omega_\kappa)$, $\Sk(\mathcal{O}_\kappa, \Gamma_\kappa)$, $\S1k(\mathcal{O}_\kappa, \Omega_\kappa)$, and $\S1k(\mathcal{O}_\kappa, \Gamma_\kappa)$ separately, since, when $\kappa$ is strongly inaccessible and the underlying space is $T_1$, these principles do not give rise to nontrivial covering properties.

We note that some of the equivalences and implications established in this section are proved by arguments analogous to those used for their classical countable counterparts in \cite{coc1,coc2}.

\begin{Th}
\label{thm3}
Assume that $\kappa$ is regular. For a space $X$, the following properties are equivalent.
\begin{multicols}{2}
\begin{enumerate}[wide=0pt,label={\upshape(\arabic*)}]
  \item $\Sk(\mathcal{O}_\kappa, \mathcal{O}_\kappa)$
  \item $\Sk(\Omega_\kappa,\mathcal{O}_\kappa)$
  \item $\Sk(\Gamma_\kappa,\mathcal{O}_\kappa)$
  \item $\Uk(\mathcal{O}_\kappa,\mathcal{O}_\kappa)$
  \item $\Uk(\Omega_\kappa,\mathcal{O}_\kappa)$
  \item $\Uk(\Gamma_\kappa,\mathcal{O}_\kappa)$.
\end{enumerate}
\end{multicols}
\end{Th}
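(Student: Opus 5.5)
The equivalence will follow from the trivial implications between the six properties together with two nontrivial ones, $(6)\Rightarrow(1)$ and $(1)\Rightarrow(4)$. By Lemma~\ref{lemincl}, regularity of $\kappa$ gives $\Gamma_\kappa(X)\subseteq\Omega_\kappa(X)\subseteq\mathcal{O}_\kappa(X)$. Hence $(1)\Rightarrow(2)\Rightarrow(3)$ and $(4)\Rightarrow(5)\Rightarrow(6)$, since a principle that holds for a larger class of input sequences holds for a smaller one. The remaining links I plan to prove are $(3)\Rightarrow(6)$, $(6)\Rightarrow(1)$ and $(1)\Rightarrow(4)$. For $(3)\Rightarrow(6)$, take a sequence of $\gamma_\kappa$-covers none of which has a subcover of size $<\kappa$, and apply $(3)$ to get $\mathcal{V}_\alpha\in[\mathcal{U}_\alpha]^{<\kappa}$. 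The family $\{\bigcup\mathcal{V}_\alpha:\alpha<\kappa\}$ then covers $X$, and it remains only to arrange that it has size $\kappa$; this is the same issue as in $(1)\Rightarrow(4)$ and is treated below.

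\emph{$(6)\Rightarrow(1)$.} This is the heart of the argument and mimics the classical proof. Let $(\mathcal{U}_\alpha:\alpha<\kappa)$ be open covers of size $\kappa$, enumerated as $\mathcal{U}_\alpha=\{U_\alpha^\beta:\beta<\kappa\}$. Let $A$ be the set of $\alpha$ for which $\mathcal{U}_\alpha$ has a subcover of size $<\kappa$.

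If $|A|=\kappa$, choose such subcovers $\mathcal{V}_\alpha$ for $\alpha\in A$. Their union is a cover; to make it have size exactly $\kappa$, add one fresh element of $\mathcal{U}_\alpha$ at each stage $\alpha<\kappa$. This is possible because $|\mathcal{U}_\alpha|=\kappa$ and only $<\kappa$ sets have been used before stage $\alpha$, by regularity.

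Otherwise $B=\kappa\setminus A$ has size $\kappa$. For $\alpha\in B$ put $W_\alpha^\beta=\bigcup_{\gamma<\beta}U_\alpha^\gamma$. Each $W_\alpha^\beta$ is open and different from $X$. The $W_\alpha^\beta$ increase with $\beta$, and $x\in U_\alpha^{\gamma}$ implies $x\in W_\alpha^\beta$ for all $\beta>\gamma$. Since the union of all $W_\alpha^\beta$ is $X$ while none equals $X$, the distinct values form a cofinal, hence (by regularity) size-$\kappa$, subfamily $\mathcal{W}_\alpha$. This $\mathcal{W}_\alpha$ is a $\gamma_\kappa$-cover, because the members missing $x$ are among the $W_\alpha^\beta$ with $\beta\le\gamma$. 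No $\mathcal{W}_\alpha$ has a subcover of size $<\kappa$, since otherwise $\mathcal{U}_\alpha$ would have one.

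Reindex $B$ as $\kappa$ and apply $(6)$ to get finite-size (that is, size $<\kappa$) subfamilies whose unions cover $X$. Each such union is a union of $<\kappa$ many $W$'s, each a union of $<\kappa$ many $U$'s, so by regularity it is a union of $<\kappa$ members of $\mathcal{U}_\alpha$. This yields the required $\mathcal{V}_\alpha$ for $\alpha\in B$. For $\alpha\in A$ we let $\mathcal{V}_\alpha$ be empty, or pad as above, to reach size $\kappa$.

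\emph{$(1)\Rightarrow(4)$ and the main obstacle.} Given covers none of which has a subcover of size $<\kappa$, $(1)$ yields $\mathcal{V}_\alpha$ with $\bigcup_\alpha\mathcal{V}_\alpha$ a cover, so $\{\bigcup\mathcal{V}_\alpha:\alpha<\kappa\}$ covers $X$. The delicate point, which I expect to be the main obstacle, is guaranteeing that this family has size $\kappa$. Merely knowing that no $\bigcup\mathcal{V}_\alpha$ equals $X$ does not prevent repetitions.

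My plan is to partition $\kappa$ into $\kappa$ disjoint blocks $I_\xi$, each of size $\kappa$, and apply $(1)$ to each block separately, so that the unions over every block already cover $X$. I would then enlarge the $\mathcal{V}_\alpha$ recursively, still keeping them in $[\mathcal{U}_\alpha]^{<\kappa}$, so that each new union differs from the $<\kappa$ previously chosen ones. This step uses that no union of $<\kappa$ members of $\mathcal{U}_\alpha$ is $X$, so there is always a member of $\mathcal{U}_\alpha$ covering a point outside the current union. If this recursion turns out not to force distinctness directly, the fallback is to note that a family of size $<\kappa$ would have to cover $X$ inside every block. I would then try to exploit the block structure and regularity of $\kappa$ to pick $\kappa$ pairwise distinct unions.
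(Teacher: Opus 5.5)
Your argument is correct and uses the same two devices as the paper. The first is padding each $\mathcal{V}_\alpha$ with one fresh member of $\mathcal{U}_\alpha$. The second is taking cumulative unions $W_\alpha^\beta$, which turn a cover with no small subcover into a $\gamma_\kappa$-cover; this is the construction from the proof of Theorem~\ref{thm4}, which the paper only alludes to with ``similarly'' for the equivalence of (1)--(3).

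The organization differs. The paper proves $\Sk(\mathcal{A},\mathcal{O}_\kappa)\Leftrightarrow\Uk(\mathcal{A},\mathcal{O}_\kappa)$ for each $\mathcal{A}$ and calls the direction $\Sk\Rightarrow\Uk$ immediate. You merge the two reductions into $(6)\Rightarrow(1)$. You also handle covers that do have small subcovers explicitly through your set $A$, whereas the paper simply assumes them away. Finally, you treat $(1)\Rightarrow(4)$ as needing an argument, and you are right about that. Membership in $\mathcal{O}_\kappa(X)$ requires $\kappa$ distinct sets, and an $\Sk$-witness need not supply them. For instance, let $X=Y\sqcup Z$ be discrete with $|Y|=|Z|=\kappa$, and let the covers alternate between $\{Y\}\cup\{\{p\}:p\in X\}$ and $\{Z\}\cup\{\{p\}:p\in X\}$. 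Neither cover has a small subcover. The selections $\{Y,\{y_\xi\}\}$ and $\{Z,\{z_\xi\}\}$ witness $\Sk$, yet their unions take only the two values $Y$ and $Z$. So this step genuinely has to be done, and the paper's ``immediate'' skips it.

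Your plan for this step is left hedged, but it works, and you need neither the block partition nor the fallback. Apply (1) once to get $\mathcal{V}_\alpha$ whose union covers $X$, then recurse on $\alpha<\kappa$. Each earlier union $V'_\beta=\bigcup\mathcal{V}'_\beta$ ($\beta<\alpha$) is a proper subset of $X$, because $\mathcal{U}_\beta$ has no subcover of size $<\kappa$. So choose $x_\beta\in X\setminus V'_\beta$ and $U_{\alpha,\beta}\in\mathcal{U}_\alpha$ with $x_\beta\in U_{\alpha,\beta}$, and put $\mathcal{V}'_\alpha=\mathcal{V}_\alpha\cup\{U_{\alpha,\beta}:\beta<\alpha\}\in[\mathcal{U}_\alpha]^{<\kappa}$. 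Then $x_\beta\in\bigcup\mathcal{V}'_\alpha\setminus V'_\beta$ for every $\beta<\alpha$, so the unions are pairwise distinct, and they still cover $X$.

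Your own formulation also succeeds: repeatedly add a member of $\mathcal{U}_\alpha$ catching a point outside the current union, taking unions at limits. The resulting chain is strictly increasing, so it can hit each of the fewer than $\kappa$ earlier values at most once. Hence after fewer than $\kappa$ steps its value is none of them. The same recursion finishes $(3)\Rightarrow(6)$.

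One small point: in your case $|B|=\kappa$, do the fresh-member padding at every $\alpha<\kappa$, not only for $\alpha\in A$. The family $\bigcup_{\alpha\in B}\mathcal{V}_\alpha$ need not have size $\kappa$ on its own.
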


\begin{proof}
First observe that for $\mathcal{A} \in \{\mathcal{O}_\kappa, \Omega_\kappa, \Gamma_\kappa\}$, the properties $\Sk(\mathcal{A}, \mathcal{O}_\kappa)$ and
$\Uk(\mathcal{A}, \mathcal{O}_\kappa)$ are equivalent. The implication $\Sk(\mathcal{A}, \mathcal{O}_\kappa)\rightarrow \Uk(\mathcal{A}, \mathcal{O}_\kappa)$ is immediate. Conversely, suppose that $X$ satisfies $\Uk(\mathcal{A}, \mathcal{O}_\kappa)$, and let $(\mathcal{U}_\alpha : \alpha< \kappa) \in \mathcal{A}(X)^\kappa$. We assume that no $\mathcal{U}_\alpha$ has a subcover of size less than $\kappa$. By $\Uk(\mathcal{A}, \mathcal{O}_\kappa)$, there are $\mathcal{V}_\alpha \in [\mathcal{U}_\alpha]^{<\kappa}$, $\alpha< \kappa$, such that $\{\bigcup \mathcal{V}_\alpha : \alpha< \kappa\} \in \mathcal{O}_\kappa(X)$. In particular, $\bigcup_{\alpha< \kappa} \mathcal{V}_\alpha$ covers $X$. If necessary, enlarge each $\mathcal{V}_\alpha$ by one member of $\mathcal{U}_\alpha$, chosen recursively so that these additional members are pairwise distinct. The resulting selections still have size less than $\kappa$, while their union is an open cover of $X$ of size $\kappa$. Therefore, $X$ satisfies $\Sk(\mathcal{A}, \mathcal{O}_\kappa)$.

Similarly, one can prove that $\Sk(\mathcal{O}_\kappa, \mathcal{O}_\kappa)$, $\Sk(\Omega_\kappa, \mathcal{O}_\kappa)$, and $\Sk(\Gamma_\kappa, \mathcal{O}_\kappa)$ are equivalent.
\end{proof}

\begin{Th}
\label{thm4}
Assume that $\kappa$ is regular. For a space $X$, the following properties are equivalent.
\begin{enumerate}[wide=0pt,label={\upshape(\arabic*)}]
  \item $\Uk(\mathcal{O}_\kappa, \Omega_\kappa)$
  \item $\Uk(\Omega_\kappa, \Omega_\kappa)$.
  \item $\Uk(\Gamma_\kappa, \Omega_\kappa)$.
\end{enumerate}
\end{Th}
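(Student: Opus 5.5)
We will use the inclusions $\Gamma_\kappa(X)\subseteq\Omega_\kappa(X)\subseteq\mathcal{O}_\kappa(X)$, which follow from Lemma~\ref{lemincl} since $\kappa$ is regular. They give $(1)\rightarrow(2)\rightarrow(3)$ at once: a sequence of covers from a smaller class is also a sequence from a larger class, and the hypothesis ``no $\mathcal{U}_\alpha$ has a subcover of size less than $\kappa$'' and the conclusion are unchanged. So the substance is $(3)\rightarrow(1)$. As in the classical argument, we will turn arbitrary open covers into $\gamma_\kappa$-covers by taking increasing unions.

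Let $X$ satisfy $\Uk(\Gamma_\kappa,\Omega_\kappa)$, and let $(\mathcal{U}_\alpha:\alpha<\kappa)\in\mathcal{O}_\kappa(X)^\kappa$ be such that no $\mathcal{U}_\alpha$ has a subcover of size less than $\kappa$. Enumerate $\mathcal{U}_\alpha=\{U_{\alpha,\xi}:\xi<\kappa\}$ and put $W_{\alpha,\beta}=\bigcup_{\xi<\beta}U_{\alpha,\xi}$ for $\beta<\kappa$. Each $W_{\alpha,\beta}$ is open. It is a union of fewer than $\kappa$ members of $\mathcal{U}_\alpha$, so $W_{\alpha,\beta}\neq X$. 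The sequence $(W_{\alpha,\beta}:\beta<\kappa)$ is $\subseteq$-increasing with union $X$.

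Let $\mathcal{W}_\alpha$ be the set of distinct values $W_{\alpha,\beta}$. We check that $\mathcal{W}_\alpha\in\Gamma_\kappa(X)$ and that it has no subcover of size less than $\kappa$.
\begin{itemize}
\item \emph{Size $\kappa$.} If only fewer than $\kappa$ distinct values occurred, regularity of $\kappa$ would make the sequence eventually constant. That constant value would then equal the union $X$, a contradiction. Hence $|\mathcal{W}_\alpha|=\kappa$.
\item \emph{$\gamma_\kappa$-property.} Given $x\in X$, pick $\xi_x$ with $x\in U_{\alpha,\xi_x}$. Then $x\in W_{\alpha,\beta}$ for every $\beta>\xi_x$. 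So the members of $\mathcal{W}_\alpha$ missing $x$ are among the fewer than $\kappa$ sets $W_{\alpha,\beta}$ with $\beta\le\xi_x$.
\item \emph{No small subcover.} A subfamily of $\mathcal{W}_\alpha$ of size less than $\kappa$ has indices bounded by some $\beta^*<\kappa$, by regularity. Its union is therefore contained in $W_{\alpha,\beta^*}\neq X$.
\end{itemize}

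Now apply $\Uk(\Gamma_\kappa,\Omega_\kappa)$ to $(\mathcal{W}_\alpha:\alpha<\kappa)$. This gives $\mathcal{V}'_\alpha\in[\mathcal{W}_\alpha]^{<\kappa}$ such that $\{\bigcup\mathcal{V}'_\alpha:\alpha<\kappa\}\in\Omega_\kappa(X)$. By regularity the indices of members of $\mathcal{V}'_\alpha$ are bounded, so by monotonicity $\bigcup\mathcal{V}'_\alpha=W_{\alpha,\beta_\alpha}$ for a suitable $\beta_\alpha<\kappa$. (If $\mathcal{V}'_\alpha$ is empty, take $\beta_\alpha=0$.) Set $\mathcal{V}_\alpha=\{U_{\alpha,\xi}:\xi<\beta_\alpha\}\in[\mathcal{U}_\alpha]^{<\kappa}$. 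Then $\bigcup\mathcal{V}_\alpha=\bigcup\mathcal{V}'_\alpha$ for every $\alpha$. Hence $\{\bigcup\mathcal{V}_\alpha:\alpha<\kappa\}$ is literally the same family of sets, and so it is an open $\kappa$-cover of size $\kappa$. This witnesses $\Uk(\mathcal{O}_\kappa,\Omega_\kappa)$.

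The main point needing care is the passage from the sequence $(W_{\alpha,\beta})$ to a genuine $\gamma_\kappa$-cover, namely that $\mathcal{W}_\alpha$ has exactly $\kappa$ distinct members despite possible repetitions. This is handled by the eventual-constancy argument above, which uses both the regularity of $\kappa$ and the assumption that no $\mathcal{U}_\alpha$ has a small subcover. The rest of the argument is routine bookkeeping with bounded index sets.
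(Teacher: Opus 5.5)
Your proof is correct and follows the same route as the paper. Both replace each $\mathcal{U}_\alpha$ by the increasing unions of its initial segments to obtain a $\gamma_\kappa$-cover, apply $\Uk(\Gamma_\kappa,\Omega_\kappa)$, and pull the selected sets back to the corresponding initial segments of $\mathcal{U}_\alpha$. You also make explicit two points the paper covers with ``clearly'', namely that $\mathcal{W}_\alpha$ has exactly $\kappa$ members and has no subcover of size less than $\kappa$ (both are needed to invoke the hypothesis), and your strict-inequality indexing makes $\bigcup\mathcal{V}'_\alpha$ literally equal to some $W_{\alpha,\beta_\alpha}$.
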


\begin{proof}
We only sketch the proof of $(3)\Rightarrow (1)$, as the other implications are immediate. Assume that $X$ satisfies $\Uk(\Gamma_\kappa, \Omega_\kappa)$, and let $(\mathcal{U}_\alpha : \alpha< \kappa) \in \mathcal{O}_\kappa(X)^\kappa$ be such that no $\mathcal{U}_\alpha$ has a subcover of size less than $\kappa$. For each $\alpha< \kappa$, write $\mathcal{U}_\alpha = \{U_\xi^{(\alpha)} : \xi< \kappa\}$. For each $\xi< \kappa$, put $W_\xi^{(\alpha)} = \bigcup_{\eta\leq \xi} U_\eta^{(\alpha)}$, and let $\mathcal{W}_\alpha = \{W_\xi^{(\alpha)} : \xi< \kappa\}$. Clearly, $\mathcal{W}_\alpha \in \Gamma_\kappa(X)$. Then there are $\mathcal{H}_\alpha \in [\mathcal{W}_\alpha]^{< \kappa}$, $\alpha< \kappa$, such that $\{\bigcup \mathcal{H}_\alpha : \alpha< \kappa\}\in \Omega_\kappa(X)$. For each $\alpha< \kappa$, define \[\mathcal{V}_\alpha = \bigcup \{\{U_{\eta}^{(\alpha)} : \eta\leq \xi\}: W_{\xi}^{(\alpha)}\in \mathcal{H}_\alpha\},\] and these selections witness that $X$ satisfies $\Uk(\mathcal{O}_\kappa, \Omega_\kappa)$.
\end{proof}

\begin{Th}
\label{thm5}
Assume that $\kappa$ is regular. For a space $X$, the following properties are equivalent.
\begin{enumerate}[wide=0pt, label={\upshape(\arabic*)}, leftmargin=*]
  \item $\Uk(\mathcal{O}_\kappa, \Gamma_\kappa)$
  \item $\Uk(\Omega_\kappa, \Gamma_\kappa)$
  \item $\Uk(\Gamma_\kappa, \Gamma_\kappa)$.
\end{enumerate}
\end{Th}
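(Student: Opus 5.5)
The proof will follow the pattern of Theorem~\ref{thm4}. Since $\kappa$ is regular, Lemma~\ref{lemincl} gives $\Gamma_\kappa(X)\subseteq\Omega_\kappa(X)\subseteq\mathcal{O}_\kappa(X)$. Hence the implications $(1)\Rightarrow(2)\Rightarrow(3)$ are immediate: a property quantifying over a larger class of input sequences is stronger. Only $(3)\Rightarrow(1)$ requires an argument.

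Assume $X$ satisfies $\Uk(\Gamma_\kappa,\Gamma_\kappa)$. Let $(\mathcal{U}_\alpha:\alpha<\kappa)\in\mathcal{O}_\kappa(X)^\kappa$ be such that no $\mathcal{U}_\alpha$ has a subcover of size less than $\kappa$. Enumerate $\mathcal{U}_\alpha=\{U^{(\alpha)}_\xi:\xi<\kappa\}$ and set $W^{(\alpha)}_\xi=\bigcup_{\eta\le\xi}U^{(\alpha)}_\eta$ and $\mathcal{W}_\alpha=\{W^{(\alpha)}_\xi:\xi<\kappa\}$.

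The first task is to check that $\mathcal{W}_\alpha\in\Gamma_\kappa(X)$ and that $\mathcal{W}_\alpha$ has no subcover of size less than $\kappa$.
\begin{itemize}
\item \emph{Size $\kappa$.} If $|\mathcal{W}_\alpha|<\kappa$, then by regularity the increasing sequence stabilizes below some $\xi_0<\kappa$. Since $\mathcal{W}_\alpha$ covers $X$, we get $X=W^{(\alpha)}_{\xi_0}$, so $\{U^{(\alpha)}_\eta:\eta\le\xi_0\}$ would be a small subcover, a contradiction. The same argument shows that no $W^{(\alpha)}_\xi$ equals $X$.
\item \emph{$\gamma_\kappa$-property.} For $x\in X$, pick $\eta$ with $x\in U^{(\alpha)}_\eta$. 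Then $x\in W^{(\alpha)}_\xi$ for all $\xi\ge\eta$, so $x$ misses fewer than $\kappa$ members of $\mathcal{W}_\alpha$.
\item \emph{No small subcover.} A subfamily of size less than $\kappa$ has its indices bounded by some $\xi<\kappa$, so its union is contained in $W^{(\alpha)}_\xi\neq X$.
\end{itemize}

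Now apply $\Uk(\Gamma_\kappa,\Gamma_\kappa)$ to obtain $\mathcal{H}_\alpha\in[\mathcal{W}_\alpha]^{<\kappa}$ with $\{\bigcup\mathcal{H}_\alpha:\alpha<\kappa\}\in\Gamma_\kappa(X)$. Define
\[\mathcal{V}_\alpha=\bigcup\{\{U^{(\alpha)}_\eta:\eta\le\xi\}:W^{(\alpha)}_\xi\in\mathcal{H}_\alpha\}.\]
Each $\mathcal{V}_\alpha$ is a union of fewer than $\kappa$ sets, each of size less than $\kappa$, so $|\mathcal{V}_\alpha|<\kappa$ by regularity. Moreover $\bigcup\mathcal{V}_\alpha=\bigcup\mathcal{H}_\alpha$, so $\{\bigcup\mathcal{V}_\alpha:\alpha<\kappa\}$ is the same family, which lies in $\Gamma_\kappa(X)$. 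This witnesses $\Uk(\mathcal{O}_\kappa,\Gamma_\kappa)$.

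The main point needing care is the verification that $\mathcal{W}_\alpha$ genuinely has size $\kappa$ and has no subcover of size less than $\kappa$, so that the hypothesis $\Uk(\Gamma_\kappa,\Gamma_\kappa)$ is applicable. This rests on regularity of $\kappa$ together with the assumption that $\mathcal{U}_\alpha$ has no small subcover. There is one further subtlety: distinct $\alpha$ might yield equal unions, so the family $\{\bigcup\mathcal{V}_\alpha\}$ could have fewer than $\kappa$ elements. This causes no trouble, because the family obtained is literally the one produced by the hypothesis, which already has size $\kappa$.
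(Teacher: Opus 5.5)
Your proof is correct and is the argument the paper intends. Theorem~\ref{thm5} is stated there without proof, but it is clearly meant to follow its sketch of Theorem~\ref{thm4}: the same cumulative unions $W^{(\alpha)}_\xi=\bigcup_{\eta\le\xi}U^{(\alpha)}_\eta$ and the same recovery of $\mathcal{V}_\alpha$. You also verify the points the paper dismisses with ``clearly'': that $\mathcal{W}_\alpha$ has size $\kappa$ and has no small subcover.

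One step is stated too quickly, namely $|\mathcal{V}_\alpha|<\kappa$. Because the sequence $(W^{(\alpha)}_\xi)$ can repeat, the index set $\{\xi : W^{(\alpha)}_\xi\in\mathcal{H}_\alpha\}$ may be larger than $|\mathcal{H}_\alpha|$. It is still bounded below $\kappa$: each $W\in\mathcal{H}_\alpha$ is a proper subset of $X$, so it equals $W^{(\alpha)}_\xi$ only for $\xi$ in a bounded interval, and regularity bounds the union of these fewer than $\kappa$ intervals. Alternatively, you can simply fix one index for each member of $\mathcal{H}_\alpha$.
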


\begin{Th}
\label{thm6}
Assume $\kappa^{<\kappa} = \kappa$. For a space $X$, the properties $\S1k(\mathcal{O}_\kappa, \mathcal{O}_\kappa)$ and $\S1k(\Omega_\kappa, \mathcal{O}_\kappa)$ are equivalent.
\end{Th}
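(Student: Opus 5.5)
The implication $\S1k(\mathcal{O}_\kappa,\mathcal{O}_\kappa)\rightarrow \S1k(\Omega_\kappa,\mathcal{O}_\kappa)$ is immediate. Since $\kappa^{<\kappa}=\kappa$ implies that $\kappa$ is regular, Lemma~\ref{lemincl} gives $\Omega_\kappa(X)\subseteq \mathcal{O}_\kappa(X)$, so every sequence from $\Omega_\kappa(X)$ is a sequence from $\mathcal{O}_\kappa(X)$. For the converse, we follow the classical argument of \cite{coc1}. Assume $X$ satisfies $\S1k(\Omega_\kappa,\mathcal{O}_\kappa)$ and let $(\mathcal{U}_\alpha:\alpha<\kappa)\in \mathcal{O}_\kappa(X)^\kappa$. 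We handle two degenerate cases first. If $X\in\mathcal{U}_\alpha$ for $\kappa$ many $\alpha$, or if some union of fewer than $\kappa$ many members is already all of $X$, then the selection can be arranged directly or by a modification of the trick below. The substantive case is when no $\mathcal{U}_\alpha$ has a subcover of size $<\kappa$; we concentrate on it.

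First fix a partition $\kappa=\bigcup_{n<\kappa}A_n$ into $\kappa$ many pairwise disjoint sets, each of size $\kappa$. For each $n<\kappa$ and each $\beta\in A_n$, consider the family
\[\mathcal{W}_\beta=\Bigl\{\bigcup\mathcal{F} : \mathcal{F}\in \Bigl[\bigcup_{\alpha\in A_n}\mathcal{U}_\alpha\Bigr]^{<\kappa}\Bigr\}.\]
Simpler is to build one family per $n$, using
\[\mathcal{W}_n=\Bigl\{U_{\alpha_1}\cup\dots : \text{unions of }<\kappa\text{ members } U_{\alpha}\in\mathcal{U}_{\alpha},\ \alpha\in s,\ s\in [A_n]^{<\kappa}\Bigr\},\]
where distinct indices are used. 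Because $|\mathcal{U}_\alpha|=\kappa$ and $\kappa^{<\kappa}=\kappa$, each $\mathcal{W}_n$ has size $\kappa$. It is a $\kappa$-cover: given $F\in[X]^{<\kappa}$, regularity lets us cover each point of $F$ by a member of a different $\mathcal{U}_\alpha$, $\alpha\in A_n$. Moreover $X\notin\mathcal{W}_n$ by the no-small-subcover assumption. Thus $\mathcal{W}_n\in\Omega_\kappa(X)$, and this is the step where $\kappa^{<\kappa}=\kappa$ is used.

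Now apply $\S1k(\Omega_\kappa,\mathcal{O}_\kappa)$ to $(\mathcal{W}_n:n<\kappa)$. This gives $W_n=\bigcup_{\alpha\in s_n}U_\alpha^{(n)}$ with $s_n\in[A_n]^{<\kappa}$ and $U^{(n)}_\alpha\in\mathcal{U}_\alpha$, such that $\{W_n:n<\kappa\}$ covers $X$. Since the $A_n$ are disjoint, the sets $s_n$ are pairwise disjoint. So for $\alpha\in\bigcup_n s_n$ we select $U_\alpha=U^{(n)}_\alpha$, and for the remaining $\alpha$ we select arbitrary members of $\mathcal{U}_\alpha$. Then $\{U_\alpha:\alpha<\kappa\}\supseteq\bigcup_n\{U^{(n)}_\alpha:\alpha\in s_n\}$ covers $X$.

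It remains to ensure that the selected family has size $\kappa$, as membership in $\mathcal{O}_\kappa$ requires. We do this by recursively choosing the arbitrary members on the leftover indices to be new, which is possible since each $|\mathcal{U}_\alpha|=\kappa$. If the leftover set is small, we instead reserve in advance a set of $\kappa$ indices for this purpose, e.g. by applying the construction only to half of $\kappa$. The main obstacle is the bookkeeping in verifying $\mathcal{W}_n\in\Omega_\kappa(X)$, namely the size count and $X\notin\mathcal{W}_n$, together with the degenerate cases where some $\mathcal{U}_\alpha$ admits a small subcover. For the latter, I would first try to absorb such $\alpha$ by directly selecting the members that are needed, which works because fewer than $\kappa$ selections from a single small subcover cannot be done with one element each. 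So instead I would group: if $\kappa$ many $\mathcal{U}_\alpha$ have small subcovers, I would use disjoint blocks of such indices to realise each small subcover one set per index.
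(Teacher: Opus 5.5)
Your overall strategy matches the paper's. You split $\kappa$ into $\kappa$ blocks of size $\kappa$ and form unions of fewer than $\kappa$ sets taken one per index from a block. You then bound the size of the resulting family by $\kappa^{<\kappa}=\kappa$, apply $S_1^\kappa(\Omega_\kappa,\mathcal{O}_\kappa)$, and unpack along the disjoint index sets $s_n$. The paper secures the size-$\kappa$ requirement with one reserved index $i_\alpha$ per block and pairwise distinct $M_\alpha$. Your padding of leftover indices with new members works equally well. The leftover set always has size $\kappa$, since each $A_n\setminus s_n$ is nonempty, so the ``reserve half'' fallback is unnecessary.

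The step ``$X\notin\mathcal{W}_n$ by the no-small-subcover assumption'' is false, and it is exactly where $\mathcal{W}_n\in\Omega_\kappa(X)$ is needed. That no single $\mathcal{U}_\alpha$ has a subcover of size $<\kappa$ says nothing about unions of sets drawn from \emph{different} $\mathcal{U}_\alpha$'s. For example, let $X=\kappa$ be discrete and split $\kappa=E\cup O$ into disjoint sets of size $\kappa$. Let the $\mathcal{U}_\alpha$, $\alpha\in A_n$, alternate between $\{E\}\cup\{\{\beta\}:\beta\in O\}$ and $\{O\}\cup\{\{\beta\}:\beta\in E\}$. No $\mathcal{U}_\alpha$ has a subcover of size $<\kappa$, yet $E\cup O=X\in\mathcal{W}_n$, so $\mathcal{W}_n\notin\Omega_\kappa(X)$ and the hypothesis cannot be applied.

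For the same reason, your last paragraph treats the wrong degenerate case. Whether some $\mathcal{U}_\alpha$ has a small subcover is irrelevant to the argument. The proposed remedy, ``realising'' a small subcover of $\mathcal{U}_\alpha$ one set per index over a block, does not make sense: members of that subcover need not belong to any other $\mathcal{U}_\beta$.

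The correct case split, which is what the paper uses, asks whether $X$ itself is a union of fewer than $\kappa$ sets chosen from pairwise distinct $\mathcal{U}_\alpha$ with indices in one block. Concretely, put only the \emph{proper} unions into $\mathcal{W}_n$.

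If some $\mathcal{W}_n$ then fails to be a $\kappa$-cover, take a witness $F\in[X]^{<\kappa}$. Cover the points of $F$ by members of $\mathcal{U}_\alpha$ for distinct $\alpha\in A_n$. By the choice of $F$, the resulting union cannot be proper, so it equals $X$. You then finish directly by choosing pairwise distinct new members on the remaining $\kappa$ indices.

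Otherwise every $\mathcal{W}_n$ is a $\kappa$-cover not containing $X$, and it has size exactly $\kappa$. It has at most $\kappa$ members by $\kappa^{<\kappa}=\kappa$, and at least $\kappa$ because the one-set unions from a single $\mathcal{U}_\alpha$ already give $\kappa$ distinct sets. In this case the rest of your argument goes through.

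This dichotomy absorbs all the special cases you listed ($X\in\mathcal{U}_\alpha$, small subcovers), so no assumption on individual $\mathcal{U}_\alpha$ is needed. Your phrase ``some union of fewer than $\kappa$ many members is already all of $X$'' gestures at this. It must be made precise (members from distinct $\mathcal{U}_\alpha$) and must replace the no-small-subcover assumption as the case split.

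A minor point: $\Omega_\kappa(X)\subseteq\mathcal{O}_\kappa(X)$ is immediate from the definitions, whereas Lemma~\ref{lemincl} concerns $\gamma_\kappa$-covers.
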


\begin{proof}
Obviously, $\S1k(\mathcal{O}_\kappa, \mathcal{O}_\kappa)$ implies $\S1k(\Omega_\kappa, \mathcal{O}_\kappa)$. For the converse, suppose that $X$ satisfies $\S1k(\Omega_\kappa, \mathcal{O}_\kappa)$, and let $(\mathcal{U}_\xi : \xi< \kappa)\in \mathcal{O}_\kappa(X)^\kappa$. Let $\{I_\alpha : \alpha< \kappa\}$ be a partition of $\kappa$ such that $|I_\alpha| = \kappa$ for every $\alpha< \kappa$. Choose $i_\alpha \in I_\alpha$ and $M_\alpha \in \mathcal{U}_{i_\alpha}$ so that the $M_\alpha$'s are pairwise distinct and proper subsets of $X$. This is possible since every $\mathcal{U}_{i_\alpha}$ has size $\kappa$. For each $\alpha< \kappa$, let $\mathcal{V}_\alpha$ consist of all proper open sets of the form $M_\alpha \cup \bigcup_{\xi\in F} U_\xi$, where $F\in [I_\alpha \setminus \{i_\alpha\}]^{<\kappa}$ and $U_\xi \in \mathcal{U}_\xi$ for each $\xi\in F$.

Since $\kappa^{<\kappa} = \kappa$, we have $|\mathcal{V}_\alpha| \leq \kappa$. Suppose that there exist $\alpha< \kappa$ such that $\mathcal{V}_\alpha$ is not a $\kappa$-cover of $X$. Then there exists $A\in [X]^{<\kappa}$ such that $A$ is contained in no member of $\mathcal{V}_\alpha$. Choose pairwise distinct
$\xi_x\in I_\alpha \setminus \{i_\alpha\}$, $x\in A$, and choose $U_{\xi_x} \in \mathcal{U}_{\xi_x}$ such that $x\in U_{\xi_x}$. Then $A\subseteq
M_\alpha \cup \bigcup_{x\in A} U_{\xi_x}$. By the choice of $A$, the set on the right cannot be a proper subset of $X$. Hence $X = M_\alpha \cup \bigcup_{x\in A} U_{\xi_x}$.

Selecting these sets from the corresponding covers and choosing one member from each of the remaining covers, we obtain a selection from $(\mathcal{U}_\xi : \xi< \kappa)$ that covers $X$. The remaining members can be chosen to be pairwise distinct and distinct from those already selected, since each $\mathcal{U}_\xi$ has size $\kappa$. As there are $\kappa$ many remaining indices, the selected family has size $\kappa$, and therefore belongs to $\mathcal{O}_\kappa(X)$.

We may therefore assume that each $\mathcal{V}_\alpha \in \Omega_\kappa(X)$. Applying $\S1k(\Omega_\kappa, \mathcal{O}_\kappa)$, choose
$V_\alpha \in \mathcal{V}_\alpha$, $\alpha< \kappa$, such that $\{V_\alpha : \alpha< \kappa\} \in \mathcal{O}_\kappa(X)$. For every $\alpha< \kappa$, write $V_\alpha = M_\alpha \cup \bigcup_{\xi\in F_\alpha} U_\xi^{(\alpha)}$, where $F_\alpha \in [I_\alpha \setminus \{i_\alpha\}]^{<\kappa}$ and $U_\xi^{(\alpha)} \in \mathcal{U}_\xi$. From $\mathcal{U}_{i_\alpha}$ select $M_\alpha$, and from $\mathcal{U}_\xi$, $\xi\in F_\alpha$, select $U_\xi^{(\alpha)}$. Choose arbitrary members from all remaining $\mathcal{U}_\xi$'s. Thus $X$ satisfies $\S1k(\mathcal{O}_\kappa, \mathcal{O}_\kappa)$.
\end{proof}

\begin{Th}
\label{thm7}
Assume that $\kappa$ is regular. For a $P_\kappa$-space, the properties $\S1k(\Gamma_\kappa, \Gamma_\kappa)$ and $\Sk(\Gamma_\kappa, \Gamma_\kappa)$ are equivalent.
\end{Th}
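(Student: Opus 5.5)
The implication $\S1k(\Gamma_\kappa,\Gamma_\kappa)\rightarrow\Sk(\Gamma_\kappa,\Gamma_\kappa)$ is immediate: a single selected set $U_\alpha$ is a selection $\mathcal{V}_\alpha=\{U_\alpha\}\in[\mathcal{U}_\alpha]^{<\kappa}$. So the work is the converse. I would follow the classical countable argument, where $\Sk(\Gamma,\Gamma)$ implies $\S1(\Gamma,\Gamma)$ via intersections, and replace finite intersections by intersections of fewer than $\kappa$ sets, using the $P_\kappa$ hypothesis to keep them open.

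Assume $X$ is a $P_\kappa$-space satisfying $\Sk(\Gamma_\kappa,\Gamma_\kappa)$, and let $(\mathcal{U}_\alpha:\alpha<\kappa)\in\Gamma_\kappa(X)^\kappa$. Enumerate $\mathcal{U}_\alpha=\{U^{(\alpha)}_\xi:\xi<\kappa\}$ injectively. For $\alpha,\xi<\kappa$ put
\[W^{(\alpha)}_\xi=\bigcap_{\beta\le\alpha}U^{(\beta)}_\xi.\]
This set is open because $X$ is $P_\kappa$ and $|\alpha+1|<\kappa$. I claim $\mathcal{W}_\alpha=\{W^{(\alpha)}_\xi:\xi<\kappa\}$ is a $\gamma_\kappa$-cover. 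For $x\in X$, the set of $\xi$ with $x\notin W^{(\alpha)}_\xi$ is the union over $\beta\le\alpha$ of the sets $\{\xi:x\notin U^{(\beta)}_\xi\}$. Each of these has size $<\kappa$, and there are fewer than $\kappa$ of them, so by regularity the union has size $<\kappa$.

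The delicate point is that $\mathcal{W}_\alpha$ must have size $\kappa$ (and contain no copy of $X$, if one insists). Repetitions $W^{(\alpha)}_\xi=W^{(\alpha)}_\eta$ can occur. I would handle this by noting that each point misses fewer than $\kappa$ members. If $\mathcal{W}_\alpha$ had size $<\kappa$, some single value $W$ would be repeated for $\kappa$ many indices $\xi$, and I expect to derive a contradiction with $\mathcal{U}_0$ being injective and $\gamma_\kappa$. If this fails, I would instead thin to a subfamily of $\kappa$ many indices on which the $W$'s are distinct, using a recursive choice. Such thinning preserves the $\gamma_\kappa$ property, since removing indices only shrinks the exceptional sets. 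If the whole family could collapse to $X$, then $\mathcal{W}_\alpha$ with $X$ removed is still handled by the same recursive argument. This bookkeeping is the step I expect to be the main obstacle.

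Apply $\Sk(\Gamma_\kappa,\Gamma_\kappa)$ to get $\mathcal{H}_\alpha\in[\mathcal{W}_\alpha]^{<\kappa}$ with $\bigcup_\alpha\mathcal{H}_\alpha\in\Gamma_\kappa(X)$. Let $\mathcal{H}=\bigcup_\alpha\mathcal{H}_\alpha$. Since $\mathcal{H}$ has size $\kappa$ and each $\mathcal{H}_\alpha$ has size $<\kappa$, regularity lets me recursively choose an increasing sequence $\alpha_0<\alpha_1<\cdots$ of length $\kappa$ and distinct $H_\nu\in\mathcal{H}_{\alpha_\nu}$, say $H_\nu=W^{(\alpha_\nu)}_{\xi_\nu}$, with all $\xi_\nu$ distinct. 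Any $\kappa$-sized subfamily of a $\gamma_\kappa$-cover is again a $\gamma_\kappa$-cover, so $\{H_\nu:\nu<\kappa\}\in\Gamma_\kappa(X)$.

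Now define the single selections. For $\beta$ with $\alpha_{\nu-1}<\beta\le\alpha_\nu$ (at limits, $\sup_{\mu<\nu}\alpha_\mu<\beta\le\alpha_\nu$), let $U_\beta=U^{(\beta)}_{\xi_\nu}$. Then $H_\nu\subseteq U_\beta$, since $\beta\le\alpha_\nu$. Given $x$, fewer than $\kappa$ many $\nu$ satisfy $x\notin H_\nu$, and each block contains fewer than $\kappa$ many $\beta$ (arrange $\alpha_\nu$ so that blocks are small, or use regularity). Hence $x\notin U_\beta$ for fewer than $\kappa$ many $\beta$. Distinctness of the $U_\beta$ needed for size $\kappa$ follows from injectivity of the enumerations and the distinct $\xi_\nu$. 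Any remaining collisions can be fixed by replacing $\xi_\nu$ by a larger index, which only helps.
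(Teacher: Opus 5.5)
The gap is in your last step: the claim that $\{U_\beta:\beta<\kappa\}$ has size $\kappa$. Injectivity of each enumeration only controls coincidences inside a single cover. It says nothing about $U^{(\beta)}_\xi=U^{(\beta')}_{\xi'}$ for different covers, so distinct $\xi_\nu$ do not give distinct sets (you also assert these without justification, and they are not needed). Your proposed repair, replacing $\xi_\nu$ by a larger index, is not available either. The enumerations carry no monotonicity, so this destroys the inclusion $H_\nu\subseteq U^{(\beta)}_{\xi_\nu}$ on which your $\gamma_\kappa$ estimate rests.

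The conclusion can in fact fail for your construction, because the paper's definition of a $\gamma_\kappa$-cover does not forbid $X$ as a member. Let $\{V_\xi:\xi<\kappa\}$ be an injectively enumerated $\gamma_\kappa$-cover by proper open sets. Let $\mathcal{U}_\gamma$ carry $X$ at index $\gamma+1$ and $V_\xi$ at every other index $\xi$. For $\nu\ge 1$ you get $W^{(\nu)}_{\nu+1}=V_{\nu+1}$, so $\mathcal{H}_0=\emptyset$, $\mathcal{H}_\nu=\{V_{\nu+1}\}$ ($\nu\ge 1$) is a legitimate witness. Your recursion may then take $\alpha_\nu=1+\nu$ and $\xi_\nu=\alpha_\nu+1$, all distinct. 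This gives $U_\beta=U^{(\beta)}_{\beta+1}=X$ for every $\beta\ge 1$ and $U_0=U^{(0)}_2=V_2$, so only two sets are selected.

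The missing step is the paper's. Discard $X$ from every $\mathcal{U}_\beta$ at the outset; this keeps each a $\gamma_\kappa$-cover, and makes every $U_\beta$ a proper subset of $X$. Then no set $U$ can equal $U_\beta$ for $\kappa$ many $\beta$, since any $x\notin U$ would violate the $\gamma_\kappa$ estimate you proved for the indexed selection. By regularity the family has $\kappa$ members. This is the same repetition argument you already use, correctly, for $|\mathcal{W}_\alpha|=\kappa$, a point the paper only asserts; your thinning fallback there is unnecessary.

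Otherwise your proof is the paper's: the same intersections $W^{(\alpha)}_\xi$, the same extraction along a cofinal $A=\{\alpha_\nu:\nu<\kappa\}$, and blocks that are the fibres of $d(\beta)=\min(A\setminus\beta)$. One small fix: your limit-stage block $\sup_{\mu<\nu}\alpha_\mu<\beta\le\alpha_\nu$ omits $\beta=\sup_{\mu<\nu}\alpha_\mu$. Instead, assign each $\beta$ to the least $\nu$ with $\alpha_\nu\ge\beta$.
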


\begin{proof}
Clearly, $\S1k(\Gamma_\kappa, \Gamma_\kappa)$ implies $\Sk(\Gamma_\kappa, \Gamma_\kappa)$. We now prove the converse. Suppose that $X$ satisfies $\Sk(\Gamma_\kappa, \Gamma_\kappa)$, and let $(\mathcal{U}_\xi : \xi< \kappa)\in \Gamma_\kappa(X)^\kappa$. Removing $X$ from each $\mathcal{U}_\xi$, if necessary, we may assume that $X\notin \mathcal{U}_\xi$. Write $\mathcal{U}_\xi = \{U_\beta^{(\xi)} : \beta< \kappa\}$, $\xi< \kappa$, where the members of each enumeration are pairwise distinct. For $\alpha, \beta< \kappa$, put $W_\beta^{(\alpha)} = \bigcap_{\xi\leq \alpha} U_\beta^{(\xi)}$, and let $\mathcal{W}_\alpha = \{W_\beta^{(\alpha)} : \beta< \kappa\}$. Then $\mathcal{W}_\alpha \in \Gamma_\kappa(X)$ for every $\alpha< \kappa$.

Apply $\Sk(\Gamma_\kappa,\Gamma_\kappa)$ to $(\mathcal{W}_\alpha : \alpha< \kappa)$. Choose $\mathcal{V}_\alpha \in [\mathcal{W}_\alpha]^{<\kappa}$, $\alpha< \kappa$, such that $\mathcal{V} = \bigcup_{\alpha< \kappa} \mathcal{V}_\alpha \in \Gamma_\kappa(X)$. Select a set $A\in [\kappa]^\kappa$ and pairwise distinct members $W_\alpha \in \mathcal{V}_\alpha$, $\alpha\in A$. Then $\{W_\alpha : \alpha \in A\}$ is a $\gamma_\kappa$-cover of $X$. For each $\alpha\in A$, choose $\beta_\alpha < \kappa$ such that $W_\alpha = W_{\beta_\alpha}^{(\alpha)} = \bigcap_{\xi\leq \alpha} U_{\beta_\alpha}^{(\xi)}$. Since $\kappa$ is regular and $|A| = \kappa$, the set $A$ is cofinal in $\kappa$. For each $\xi< \kappa$, let $d(\xi) = \min (A\setminus \xi)$, and select $U_\xi = U_{\beta_{d(\xi)}}^{(\xi)} \in \mathcal{U}_\xi$.

We show that $\{U_\xi : \xi< \kappa\} \in \Gamma_\kappa(X)$. Fix $x\in X$. If $x\notin U_\xi$, then since $\xi \leq d(\xi)$, $x\notin W_{d(\xi)}$. Therefore, \[\{\xi< \kappa : x\notin U_\xi\} \subseteq \bigcup_{\substack{\alpha\in A\\ x\notin W_\alpha}} d^{-1}(\{\alpha\}).\] For every $\alpha\in A$, $|d^{-1}(\{\alpha\})| < \kappa$, and since $\{W_\alpha : \alpha\in A\}$ is a $\gamma_\kappa$-cover, the set $\{\alpha\in A : x\notin W_\alpha\}$ has size less than $\kappa$. By regularity of $\kappa$, the set $\{\xi< \kappa : x\notin U_\xi\}$ also has size less than $\kappa$.

Finally, each $U_\xi$ is a proper subset of $X$. Hence no fixed $U_\xi$ can occur for $\kappa$ many indices; otherwise, choosing a point outside that set would contradict the preceding conclusion. Since $\kappa$ is regular, it follows that the size of $\{U_\xi : \xi< \kappa\}$ is $\kappa$. Thus $\{U_\xi : \xi< \kappa\} \in \Gamma_\kappa(X)$. Therefore, $X$ satisfies $\S1k(\Gamma_\kappa, \Gamma_\kappa)$.
\end{proof}

\begin{Th}
\label{thm8}
Assume $\kappa^{<\kappa} = \kappa$. For a $P_\kappa$-space $X$, the properties $\S1k(\Omega_\kappa, \Gamma_\kappa)$ and $\Sk(\Omega_\kappa, \Gamma_\kappa)$ are equivalent.
\end{Th}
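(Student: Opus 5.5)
The implication $\S1k(\Omega_\kappa,\Gamma_\kappa)\rightarrow\Sk(\Omega_\kappa,\Gamma_\kappa)$ is immediate. For the converse I will mimic the proof of Theorem~\ref{thm7}, replacing the intersections of $\gamma_\kappa$-covers by intersections of $\kappa$-covers. Suppose $X$ satisfies $\Sk(\Omega_\kappa,\Gamma_\kappa)$ and let $(\mathcal{U}_\xi:\xi<\kappa)\in\Omega_\kappa(X)^\kappa$.

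For each $\alpha<\kappa$ let $\mathcal{W}_\alpha$ be the family of all sets $\bigcap_{\xi\leq\alpha}U_\xi$ with $U_\xi\in\mathcal{U}_\xi$ for $\xi\leq\alpha$. Each such set is open because $X$ is a $P_\kappa$-space and $|\alpha+1|<\kappa$. Since $\kappa^{<\kappa}=\kappa$, we have $|\mathcal{W}_\alpha|\le\kappa$. It is a $\kappa$-cover: given $F\in[X]^{<\kappa}$, pick $U_\xi\supseteq F$ in each $\mathcal{U}_\xi$, and the intersection still contains $F$. It is also proper, because each member lies in a member of $\mathcal{U}_0$, which is not $X$. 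Its size is exactly $\kappa$, since by Fact~\ref{fact1} there are $\kappa$ many choices in $\mathcal{U}_\alpha$ containing any given small set. One more check is needed here: distinct choices might produce the same intersection. If $|\mathcal{W}_\alpha|<\kappa$, then by regularity some single $W\in\mathcal{W}_\alpha$ would contain every $F\in[X]^{<\kappa}$, hence contain all of $X$, a contradiction. So $\mathcal{W}_\alpha\in\Omega_\kappa(X)$.

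Next apply $\Sk(\Omega_\kappa,\Gamma_\kappa)$ to get $\mathcal{V}_\alpha\in[\mathcal{W}_\alpha]^{<\kappa}$ with $\mathcal{V}=\bigcup_\alpha\mathcal{V}_\alpha\in\Gamma_\kappa(X)$. Since $|\mathcal{V}|=\kappa$ and each $\mathcal{V}_\alpha$ is small, regularity gives $A\in[\kappa]^\kappa$ and pairwise distinct $W_\alpha\in\mathcal{V}_\alpha$ for $\alpha\in A$. The family $\{W_\alpha:\alpha\in A\}$ is a subfamily of size $\kappa$ of a $\gamma_\kappa$-cover, hence is itself a $\gamma_\kappa$-cover. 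Write $W_\alpha=\bigcap_{\xi\le\alpha}U^{(\alpha)}_\xi$. For $\xi<\kappa$ set $d(\xi)=\min(A\setminus\xi)$ and select $U_\xi=U^{(d(\xi))}_\xi\in\mathcal{U}_\xi$, so that $W_{d(\xi)}\subseteq U_\xi$.

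The verification then proceeds exactly as in Theorem~\ref{thm7}. Fix $x\in X$; then
\[\{\xi<\kappa : x\notin U_\xi\}\subseteq\bigcup\{d^{-1}(\{\alpha\}):\alpha\in A,\ x\notin W_\alpha\}.\]
This is a union of fewer than $\kappa$ sets, each of size less than $\kappa$, so it has size less than $\kappa$ by regularity. Each $U_\xi\neq X$, so no single set can repeat $\kappa$ times, and $\{U_\xi:\xi<\kappa\}$ has size $\kappa$. Hence it belongs to $\Gamma_\kappa(X)$.

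I expect the main obstacle to be the bookkeeping showing that $\mathcal{W}_\alpha$ really lies in $\Omega_\kappa(X)$, that is, it has size exactly $\kappa$ and excludes $X$. This is where $\kappa^{<\kappa}=\kappa$, the $P_\kappa$ property and Fact~\ref{fact1} are all used. The diagonal selection itself is routine once Theorem~\ref{thm7} is available.
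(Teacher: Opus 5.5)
Your proof is correct and follows the paper's argument. You use the same families $\mathcal{W}_\alpha$ of intersections of the first $\alpha+1$ covers and the same check that $\mathcal{W}_\alpha\in\Omega_\kappa(X)$; the paper just notes $|\mathcal{W}_\alpha|\le\kappa^{<\kappa}=\kappa$ and that a $\kappa$-cover by proper sets has size at least $\kappa$. You then apply the diagonal selection $U_\xi=U^{(d(\xi))}_\xi$ with $d(\xi)=\min(A\setminus\xi)$ from the proof of Theorem~\ref{thm7}, which you write out in full where the paper only refers back to an earlier proof (and the regularity of $\kappa$ you invoke throughout is automatic from $\kappa^{<\kappa}=\kappa$ by K\"onig's theorem).
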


\begin{proof}
We only prove that $\Sk(\Omega_\kappa, \Gamma_\kappa)$ implies $\S1k(\Omega_\kappa, \Gamma_\kappa)$, as the converse is immediate. Suppose that $X$ satisfies $\Sk(\Omega_\kappa, \Gamma_\kappa)$, and let $(\mathcal{U}_\xi : \xi< \kappa) \in \Omega_\kappa(X)^\kappa$. For each $\alpha< \kappa$, define $\mathcal{W}_\alpha = \{\bigcap_{\xi\leq \alpha} U_\xi : U_\xi \in \mathcal{U}_\xi \text{ for each } \xi\leq \alpha\}$. Then $\mathcal{W}_\alpha \in \Omega_\kappa(X)$ for every $\alpha< \kappa$. Indeed, if $F\in [X]^{<\kappa}$, then for every $\xi\leq \alpha$, there is $U_\xi\in \mathcal{U}_\xi$ such that $F\subseteq U_\xi$. Hence $F\subseteq \bigcap_{\xi\leq \alpha} U_\xi$. Also no member of $\mathcal{W}_\alpha$ is equal to $X$, since every $U_\xi\in \mathcal{U}_\xi$ is a proper subset of $X$. Furthermore, \[|\mathcal{W}_\alpha| \leq \kappa^{|\alpha+1|} \leq \kappa^{<\kappa} = \kappa,\] and therefore, $|\mathcal{W}_\alpha| = \kappa$.

The remainder of the proof proceeds as in the proof of Theorem~\ref{thm3}.
\end{proof}

\begin{Prop}
Assume that $\kappa$ is regular. For a space $X$, $\Sk(\Gamma_\kappa, \Omega_\kappa)$ implies $\Uk(\mathcal{O}_\kappa, \Omega_\kappa)$.
\end{Prop}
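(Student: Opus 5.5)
By Theorem~\ref{thm4}, $\Uk(\mathcal{O}_\kappa, \Omega_\kappa)$ is equivalent to $\Uk(\Gamma_\kappa, \Omega_\kappa)$ when $\kappa$ is regular. I will nevertheless start from an arbitrary sequence in $\mathcal{O}_\kappa(X)$ and convert it to $\gamma_\kappa$-covers myself, imitating the proof of Theorem~\ref{thm4}. Let $(\mathcal{U}_\alpha : \alpha<\kappa)\in\mathcal{O}_\kappa(X)^\kappa$ with no $\mathcal{U}_\alpha$ having a subcover of size less than $\kappa$. Enumerate $\mathcal{U}_\alpha=\{U^{(\alpha)}_\xi:\xi<\kappa\}$ and put $W^{(\alpha)}_\xi=\bigcup_{\eta\le\xi}U^{(\alpha)}_\eta$. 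These sets are open, being unions of open sets, so no $P_\kappa$ assumption is needed. The family $\mathcal{W}_\alpha=\{W^{(\alpha)}_\xi:\xi<\kappa\}$ is increasing.

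I then check that $\mathcal{W}_\alpha\in\Gamma_\kappa(X)$. Each $W^{(\alpha)}_\xi$ is a proper subset of $X$, since $\mathcal{U}_\alpha$ has no subcover of size less than $\kappa$. Every $x$ lies in all $W^{(\alpha)}_\xi$ with $\xi$ beyond some index. The family has size $\kappa$: it is increasing and never reaches $X$, so if it took fewer than $\kappa$ values, regularity of $\kappa$ would make it eventually constant equal to some proper $W$. But the union of the whole increasing family is $X$, a contradiction.

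Next I apply $\Sk(\Gamma_\kappa,\Omega_\kappa)$ to $(\mathcal{W}_\alpha:\alpha<\kappa)$. This gives $\mathcal{H}_\alpha\in[\mathcal{W}_\alpha]^{<\kappa}$ with $\bigcup_\alpha\mathcal{H}_\alpha\in\Omega_\kappa(X)$. Let $\xi_\alpha=\sup\{\xi: W^{(\alpha)}_\xi\in\mathcal{H}_\alpha\}$, which is less than $\kappa$ by regularity. Set $\mathcal{V}_\alpha=\{U^{(\alpha)}_\eta:\eta\le\xi_\alpha\}\in[\mathcal{U}_\alpha]^{<\kappa}$. Then $\bigcup\mathcal{V}_\alpha=W^{(\alpha)}_{\xi_\alpha}$ contains every member of $\mathcal{H}_\alpha$, and it is not $X$. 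Hence for any $F\in[X]^{<\kappa}$ there are $\alpha$ and $H\in\mathcal{H}_\alpha$ with $F\subseteq H\subseteq\bigcup\mathcal{V}_\alpha$. So $\mathcal{V}=\{\bigcup\mathcal{V}_\alpha:\alpha<\kappa\}$ is a $\kappa$-cover of $X$ not containing $X$.

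The step I expect to be the main obstacle is that $\mathcal{V}$ must have size exactly $\kappa$, since $\Omega_\kappa$ demands this and distinct $\alpha$ might yield equal unions. The bound $|\mathcal{V}|\le\kappa$ is clear from the indexing. For the lower bound I argue as follows. Suppose $|\mathcal{V}|<\kappa$, and pick for each member $V\in\mathcal{V}$ a point $x_V\notin V$, possible since $V\neq X$. The set $\{x_V : V\in\mathcal{V}\}$ has size less than $\kappa$, so it lies inside some member $V$ of $\mathcal{V}$, which contradicts $x_V\notin V$. Thus $\mathcal{V}\in\Omega_\kappa(X)$, and $X$ satisfies $\Uk(\mathcal{O}_\kappa,\Omega_\kappa)$.
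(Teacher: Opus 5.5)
Your proof is correct and matches the intended argument. The paper states this proposition without a separate proof because it follows from the immediate implication $\Sk(\Gamma_\kappa,\Omega_\kappa)\rightarrow\Uk(\Gamma_\kappa,\Omega_\kappa)$ (take unions of the selected families of size less than $\kappa$) together with Theorem~\ref{thm4}; you have inlined the increasing-union construction from the proof of Theorem~\ref{thm4}. The one unstated detail is that $\xi_\alpha<\kappa$ even though $\xi\mapsto W^{(\alpha)}_\xi$ may repeat values: your observation that the sequence is not eventually constant makes each fiber $\{\xi : W^{(\alpha)}_\xi = H\}$ a bounded interval, and regularity bounds the union of fewer than $\kappa$ of them.
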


\begin{Prop}
For a space $X$, $\Uk(\mathcal{O}_\kappa, \Omega_\kappa)$ implies $\Sk(\mathcal{O}_\kappa, \mathcal{O}_\kappa)$.
\end{Prop}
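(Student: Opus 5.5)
The plan is to derive $\Uk(\mathcal{O}_\kappa,\mathcal{O}_\kappa)$ and then invoke Theorem~\ref{thm3}. By that theorem, $\Uk(\mathcal{O}_\kappa,\mathcal{O}_\kappa)$ is equivalent to $\Sk(\mathcal{O}_\kappa,\mathcal{O}_\kappa)$. Theorem~\ref{thm3} assumes $\kappa$ regular, whereas the statement does not. I will therefore first check that the direction $\Uk(\mathcal{O}_\kappa,\mathcal{O}_\kappa)\Rightarrow\Sk(\mathcal{O}_\kappa,\mathcal{O}_\kappa)$ used there never uses regularity. In the proof of Theorem~\ref{thm3}, that step only enlarges each $\mathcal{V}_\alpha$ by one new member of $\mathcal{U}_\alpha$, and the sets stay of size $<\kappa$ without any appeal to regularity. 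So I will simply rerun that argument directly, which also avoids relying on the hypothesis.

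\textbf{Step 1: the degenerate case.} Let $(\mathcal{U}_\alpha:\alpha<\kappa)\in\mathcal{O}_\kappa(X)^\kappa$. Suppose some $\mathcal{U}_{\alpha_0}$ has a subcover $\mathcal{V}_{\alpha_0}$ of size $<\kappa$. For every other $\alpha$, choose singletons $\mathcal{V}_\alpha=\{U_\alpha\}$ so that the chosen sets are pairwise distinct and distinct from the members of $\mathcal{V}_{\alpha_0}$. This is possible recursively, since fewer than $\kappa$ sets have been used at each stage and $|\mathcal{U}_\alpha|=\kappa$. The union of the $\mathcal{V}_\alpha$ covers $X$ and has size $\kappa$, so it lies in $\mathcal{O}_\kappa(X)$.

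\textbf{Step 2: the main case.} Otherwise no $\mathcal{U}_\alpha$ has a subcover of size $<\kappa$, and $\Uk(\mathcal{O}_\kappa,\Omega_\kappa)$ gives $\mathcal{V}_\alpha\in[\mathcal{U}_\alpha]^{<\kappa}$ with $\{\bigcup\mathcal{V}_\alpha:\alpha<\kappa\}\in\Omega_\kappa(X)$. Since every singleton $\{x\}$ lies in $[X]^{<\kappa}$, this family covers $X$, so $\bigcup_\alpha\mathcal{V}_\alpha$ is an open cover of $X$. It remains to ensure the union has size $\kappa$. Here I enlarge each $\mathcal{V}_\alpha$ by one new member of $\mathcal{U}_\alpha$, chosen recursively to be pairwise distinct, exactly as in Theorem~\ref{thm3}. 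The enlarged families still have size $<\kappa$, and their union has size exactly $\kappa$: it is at least $\kappa$ because of the $\kappa$ distinct new members, and at most $\kappa$ because it is contained in $\bigcup_\alpha\mathcal{U}_\alpha$.

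The only point needing care is the recursive choice of distinct new members. At stage $\alpha$, at most $|\alpha|<\kappa$ sets have already been used, while $|\mathcal{U}_\alpha|=\kappa$, so a fresh member is always available. There is no real obstacle; the proposition is essentially immediate once one notes that an $\Omega_\kappa$-cover is in particular a cover.
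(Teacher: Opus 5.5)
Your proof is correct and follows the route the paper implicitly intends (the paper gives no separate proof). Since $\Omega_\kappa(X)\subseteq\mathcal{O}_\kappa(X)$, the property $\Uk(\mathcal{O}_\kappa,\Omega_\kappa)$ yields $\Uk(\mathcal{O}_\kappa,\mathcal{O}_\kappa)$, and the enlargement argument from Theorem~\ref{thm3} converts this to $\Sk(\mathcal{O}_\kappa,\mathcal{O}_\kappa)$. You also handle the case where some $\mathcal{U}_\alpha$ has a subcover of size less than $\kappa$, and you verify that this direction needs no regularity of $\kappa$, as the statement requires. Both are correct refinements of points the paper leaves implicit.
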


\begin{Rem}
\label{rem7}
For a space $X$, the following implications are immediate.
\begin{enumerate}[wide=0pt, label={\upshape(\arabic*)}, leftmargin=*]
  \item $\S1k(\Gamma_\kappa, \Omega_\kappa) \rightarrow \Sk(\Gamma_\kappa, \Omega_\kappa)$; $\S1k(\Gamma_\kappa, \Omega_\kappa) \rightarrow \S1k(\Gamma_\kappa, \mathcal{O}_\kappa)$; $\S1k(\mathcal{O}_\kappa, \mathcal{O}_\kappa) \rightarrow \S1k(\Gamma_\kappa, \mathcal{O}_\kappa)$; and $\S1k(\Omega_\kappa, \Omega_\kappa) \rightarrow \Sk(\Omega_\kappa, \Omega_\kappa)$.

  \item If $\kappa$ is regular, then $\S1k(\Omega_\kappa, \Gamma_\kappa) \rightarrow \S1k(\Gamma_\kappa, \Gamma_\kappa) \rightarrow \S1k(\Gamma_\kappa, \Omega_\kappa)$; $\S1k(\Omega_\kappa, \Gamma_\kappa) \rightarrow \S1k(\Omega_\kappa, \Omega_\kappa)  \rightarrow \S1k(\Gamma_\kappa, \Omega_\kappa)$; $\Sk(\Omega_\kappa, \Omega_\kappa) \rightarrow \Sk(\Gamma_\kappa, \Omega_\kappa)$; $\S1k(\Gamma_\kappa, \mathcal{O}_\kappa) \rightarrow \Sk(\mathcal{O}_\kappa, \mathcal{O}_\kappa)$; and $\S1k(\Gamma_\kappa, \Gamma_\kappa) \rightarrow \Uk(\mathcal{O}_\kappa, \Gamma_\kappa) \rightarrow \Uk(\mathcal{O}_\kappa, \Omega_\kappa)$.

  \item If $\kappa^{<\kappa} = \kappa$, then $\S1k(\Omega_\kappa, \Omega_\kappa)  \rightarrow \S1k(\mathcal{O}_\kappa, \mathcal{O}_\kappa)$.
\end{enumerate}
\end{Rem}

\subsection{Non-implications and critical cardinalities}
\label{sec-ni}
We begin by recalling the following results from \cite{ucsp} concerning the critical cardinalities of certain selection principles.

\begin{Th}[{\cite[Corollary~3.2]{ucsp}}]
\label{thm14}
Assume $\kappa^{<\kappa} = \kappa$. Then $\non(\kappa\text{-Hurewicz}) = \bk$.
\end{Th}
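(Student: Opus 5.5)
The proof splits into the two inequalities $\non(\kappa\text{-Hurewicz})\geq\bk$ and $\non(\kappa\text{-Hurewicz})\leq\bk$. The lower bound comes from a bounding argument. The upper bound comes from exhibiting an unbounded family of size $\bk$ inside $[\kappa]^\kappa\subseteq P(\kappa)$ that is not $\kappa$-Hurewicz.

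\emph{Lower bound.} Let $X\subseteq P(\kappa)$ with $|X|<\bk$, and let $(\mathcal{U}_\alpha:\alpha<\kappa)\in\mathcal{O}_\kappa(X)^\kappa$ be such that no $\mathcal{U}_\alpha$ has a subcover of size less than $\kappa$. Enumerate $\mathcal{U}_\alpha=\{U^{(\alpha)}_\xi:\xi<\kappa\}$. For $x\in X$ define $f_x\in\kappa^\kappa$ by $f_x(\alpha)=\min\{\xi:x\in U^{(\alpha)}_\xi\}$. Since $|\{f_x:x\in X\}|<\bk$, there is $g\in\kappa^\kappa$ with $f_x\leq^*_\kappa g$ for every $x\in X$. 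Put $\mathcal{V}_\alpha=\{U^{(\alpha)}_\xi:\xi\leq g(\alpha)\}\in[\mathcal{U}_\alpha]^{<\kappa}$ and $W_\alpha=\bigcup\mathcal{V}_\alpha$.

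For each $x$, the set of $\alpha$ with $x\notin W_\alpha$ is contained in $\{\alpha:f_x(\alpha)>g(\alpha)\}$, so it has size less than $\kappa$. No $W_\alpha$ equals $X$, by the assumption on $\mathcal{U}_\alpha$. It remains to see that $\{W_\alpha:\alpha<\kappa\}$ has size $\kappa$. If some set $W$ occurred as $W_\alpha$ for $\kappa$ many $\alpha$, then any point $x\notin W$ would be missed by $\kappa$ many $W_\alpha$, a contradiction. Regularity of $\kappa$ then gives $\kappa$ distinct values. Hence $\{W_\alpha:\alpha<\kappa\}\in\Gamma_\kappa(X)$, and $X$ is $\kappa$-Hurewicz.

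\emph{Upper bound.} Take an unbounded $Y\subseteq[\kappa]^\kappa$ of size $\bk$, identified with increasing functions. To guarantee nondegeneracy, add the $\kappa$ functions $h_\beta$, $\beta<\kappa$, where $h_\beta$ is the increasing enumeration of $[\beta,\kappa)$; then $h_\beta(\alpha)\geq\beta$ for every $\alpha$. Since $\kappa<\bk$, the family $Z=Y\cup\{h_\beta:\beta<\kappa\}$ still has size $\bk$ and is still unbounded. By the remark in Section~\ref{sec-cb}, the topology on $Z$ inherited from $P(\kappa)$ is the one inherited from $\kappa^\kappa$ with the $\kappa$-product topology.

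For $\alpha,\xi<\kappa$, let $U^{(\alpha)}_\xi=\{f\in Z:f(\alpha)<\xi\}$. These sets are open, since they depend on one coordinate. Because of the functions $h_\beta$, the family $\mathcal{U}_\alpha=\{U^{(\alpha)}_\xi:\xi<\kappa\}$ has size $\kappa$, covers $Z$, consists of proper subsets, and has no subcover of size less than $\kappa$: a union of fewer than $\kappa$ of its members is $U^{(\alpha)}_\eta$ for some $\eta<\kappa$ by regularity, and this set misses $h_\eta$.

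Suppose $Z$ were $\kappa$-Hurewicz. Then there are $\mathcal{V}_\alpha\in[\mathcal{U}_\alpha]^{<\kappa}$ such that $\{\bigcup\mathcal{V}_\alpha:\alpha<\kappa\}\in\Gamma_\kappa(Z)$. Since each $\mathcal{U}_\alpha$ is increasing in $\xi$, we have $\bigcup\mathcal{V}_\alpha=U^{(\alpha)}_{g(\alpha)}$ with $g(\alpha)=\sup\{\xi:U^{(\alpha)}_\xi\in\mathcal{V}_\alpha\}<\kappa$, again by regularity. The $\gamma_\kappa$-cover property says that every $f\in Z$ satisfies $f(\alpha)<g(\alpha)$ for all but fewer than $\kappa$ many $\alpha$, so $Z$ is bounded by $g$. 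This is a contradiction, so $Z$ is not $\kappa$-Hurewicz and $\non(\kappa\text{-Hurewicz})\leq\bk$.

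The main obstacle is the degenerate case in which some coordinate cover $\mathcal{U}_\alpha$ has a small subcover or has size less than $\kappa$, which would make it ineligible for the $\Uk$ principle. Adjoining the functions $h_\beta$ is intended to exclude this case. The remaining point to check carefully is that the two topologies on $[\kappa]^\kappa$ agree, so that the cover sets are open in $P(\kappa)$. The hypothesis $\kappa^{<\kappa}=\kappa$ is not used in the argument as planned; at most it is needed as a setting assumption about the spaces $2^\kappa$ and $\kappa^\kappa$.
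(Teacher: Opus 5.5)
Your lower bound $\non(\kappa\text{-Hurewicz})\geq\mathfrak{b}_\kappa$ is correct, including the check that $\{W_\alpha:\alpha<\kappa\}$ has $\kappa$ distinct members. The paper only quotes this theorem from \cite{ucsp} without proof, so the comparison is with the argument itself.

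\textbf{The gap.} The upper bound fails at the sentence ``The $\gamma_\kappa$-cover property says that every $f\in Z$ satisfies $f(\alpha)<g(\alpha)$ for all but fewer than $\kappa$ many $\alpha$.'' Membership of $\mathcal{W}=\{\bigcup\mathcal{V}_\alpha:\alpha<\kappa\}$ in $\Gamma_\kappa(Z)$ only says that each $f$ lies outside fewer than $\kappa$ \emph{members} of $\mathcal{W}$. It says nothing about \emph{indices}, because $\alpha\mapsto\bigcup\mathcal{V}_\alpha$ need not be injective.

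This is not a technicality in your setting. Since members of $Z$ are increasing, $f(\alpha)\geq\alpha$, so $U^{(\alpha)}_0=U^{(\alpha)}_\alpha=\emptyset$. A witness may therefore take $\bigcup\mathcal{V}_\alpha=\emptyset$ on $\kappa$ many indices. For instance, keep any given witness on a $\kappa$-sized index set carrying pairwise distinct members, and use $\emptyset$ on a disjoint $\kappa$-sized set. The resulting family is still in $\Gamma_\kappa(Z)$, because $\emptyset$ is counted once. Yet your inequality fails for every $f$ at $\kappa$ many coordinates.

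In the lower bound you only needed the easy direction (index-wise to member-wise). Here you need the converse, and it is false as stated.

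\textbf{The repair.} Use the device from the proof of Theorem~\ref{thm7}, together with the fact that members of $Z$ are increasing.
\begin{enumerate}
\item Since $|\mathcal{W}|=\kappa$, choose $A\in[\kappa]^\kappa$ such that the sets $\bigcup\mathcal{V}_\alpha$, $\alpha\in A$, are pairwise distinct. Then for every $f\in Z$ the set $E_f=\{\alpha\in A: f(\alpha)\geq g(\alpha)\}$ has size less than $\kappa$.
\item This alone does not bound $Z$: a family of arbitrary functions can be bounded on a cofinal set of coordinates and still be unbounded. So put $d(\beta)=\min(A\setminus\beta)$ and $g'(\beta)=g(d(\beta))$.
\item For increasing $f$ with $d(\beta)\notin E_f$, you get $f(\beta)\leq f(d(\beta))<g'(\beta)$.
\item The set $\{\beta:d(\beta)\in E_f\}=\bigcup_{\alpha\in E_f}d^{-1}(\{\alpha\})$ has size less than $\kappa$, since each $d^{-1}(\{\alpha\})\subseteq\alpha+1$ and $\kappa$ is regular.
\end{enumerate}
Hence $f\leq^*_\kappa g'$ for every $f\in Z$, which is the contradiction you want.

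\textbf{Two smaller points.}
\begin{enumerate}
\item The claim $g(\alpha)<\kappa$ needs a word. The map $\xi\mapsto U^{(\alpha)}_\xi$ is not injective, so $\{\xi:U^{(\alpha)}_\xi\in\mathcal{V}_\alpha\}$ can have size $\kappa$. It is still bounded, because each value is taken on a bounded interval of $\xi$'s: a value taken cofinally often would be all of $Z$. Alternatively, take the supremum over one chosen index per member of $\mathcal{V}_\alpha$.
\item Your closing remark is inaccurate. Regularity of $\kappa$ is used throughout: suprema of fewer than $\kappa$ ordinals, the counting arguments, and the agreement of the two topologies on $[\kappa]^\kappa$. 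That regularity is exactly what $\kappa^{<\kappa}=\kappa$ provides, by K\"onig's theorem.
\end{enumerate}
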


\begin{Th}[{\cite[Lemma~3.3]{ucsp}}]
\label{thm16}
Assume $\kappa^{<\kappa} = \kappa$. Then $\non(\S1k(\Gamma_\kappa, \Gamma_\kappa)) = \bk$.
\end{Th}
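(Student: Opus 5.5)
The equality splits into two inequalities, and the upper bound is essentially free from earlier results. First note that $\kappa^{<\kappa}=\kappa$ forces $\kappa$ to be regular, since by K\"onig's theorem $\kappa^{\cf\kappa}>\kappa$. Hence Remark~\ref{rem7}(2) applies and gives $\S1k(\Gamma_\kappa,\Gamma_\kappa)\rightarrow\Uk(\mathcal{O}_\kappa,\Gamma_\kappa)$, i.e.\ every $\S1k(\Gamma_\kappa,\Gamma_\kappa)$ space is $\kappa$-Hurewicz. Therefore $\non(\S1k(\Gamma_\kappa,\Gamma_\kappa))\leq\non(\kappa\text{-Hurewicz})=\bk$ by Theorem~\ref{thm14}.

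For the lower bound, I would fix $X\subseteq P(\kappa)$ with $|X|<\bk$ and a sequence $(\mathcal{U}_\xi:\xi<\kappa)\in\Gamma_\kappa(X)^\kappa$. I would remove $X$ from each $\mathcal{U}_\xi$ if necessary, and enumerate $\mathcal{U}_\xi=\{U^{(\xi)}_\beta:\beta<\kappa\}$ injectively. For $x\in X$, the set $\{\beta:x\notin U^{(\xi)}_\beta\}$ has size less than $\kappa$, so by regularity it is bounded in $\kappa$. Let $f_x(\xi)<\kappa$ be a bound above it, so that $x\in U^{(\xi)}_\beta$ for all $\beta\geq f_x(\xi)$. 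This gives fewer than $\bk$ functions $f_x\in\kappa^\kappa$, so there is $g\in\kappa^\kappa$ with $f_x\leq^*_\kappa g$ for every $x\in X$. Select $U_\xi=U^{(\xi)}_{g(\xi)}$. For each $x$, the set $\{\xi:x\notin U_\xi\}\subseteq\{\xi:g(\xi)<f_x(\xi)\}$ has size less than $\kappa$.

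It remains to check that $\{U_\xi:\xi<\kappa\}$ has size exactly $\kappa$, and this is the step needing care. I would argue exactly as at the end of the proof of Theorem~\ref{thm7}. Each $U_\xi$ is a proper subset of $X$. If one set occurred for $\kappa$ many indices $\xi$, then a point outside it would be missed $\kappa$ many times, contradicting the previous paragraph. Regularity then yields $\kappa$ distinct members, so the selection lies in $\Gamma_\kappa(X)$.

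If $|X|<\kappa$, then $\Gamma_\kappa(X)=\emptyset$ by Remark~\ref{rem9}, and the property holds vacuously, so no extra case is needed. I expect no serious obstacle. The only delicate points are the use of regularity to define $f_x$ and the size-$\kappa$ requirement on the selected cover.
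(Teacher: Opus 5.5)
Your proof is correct. The paper gives no proof of its own here; it simply quotes \cite[Lemma~3.3]{ucsp}. Your argument is the standard one and follows the same pattern as the paper's proof of the analogous Theorem~\ref{lemma11}: the upper bound comes from the implication $\S1k(\Gamma_\kappa,\Gamma_\kappa)\rightarrow\Uk(\mathcal{O}_\kappa,\Gamma_\kappa)$ together with Theorem~\ref{thm14}, and the lower bound from bounding the functions $f_x$, $x\in X$, by a single $g$ and selecting the $g(\xi)$-th member.

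The only difference is cosmetic. You obtain $\kappa$ distinct selected sets by removing $X$ and using the proper-subset/regularity argument from the end of the proof of Theorem~\ref{thm7}. The proof of Theorem~\ref{lemma11} instead recursively picks indices $\geq g(\alpha)$ so that the chosen members are pairwise distinct. That version has the minor advantage of also working verbatim when covers consist of open subsets of $P(\kappa)$, as in Lemma~\ref{lemma5}.
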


\begin{Th}[{\cite[Corollary~4.3]{ucsp}}]
\label{thm15}
Assume $\kappa^{<\kappa} = \kappa$. Then $\non(\kappa\text{-Menger}) = \dk$.
\end{Th}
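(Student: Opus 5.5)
The plan is to prove the two inequalities $\dk \leq \non(\kappa\text{-Menger})$ and $\non(\kappa\text{-Menger}) \leq \dk$ separately, in the classical way: $\kappa$-Menger covers are translated into functions in $\kappa^\kappa$ and compared with $\leq_\kappa^*$.

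\emph{Lower bound.} Let $X\subseteq P(\kappa)$ with $|X|<\dk$, and let $(\mathcal{U}_\alpha : \alpha<\kappa)\in\mathcal{O}_\kappa(X)^\kappa$. Enumerate $\mathcal{U}_\alpha=\{U_{\alpha,\beta}:\beta<\kappa\}$. For $x\in X$ define $f_x\in\kappa^\kappa$ by
\[
f_x(\alpha)=\min\{\beta : x\in U_{\alpha,\beta}\}.
\]
The family $\{f_x : x\in X\}$ has size $<\dk$, so it is not dominating. Hence there is $h\in\kappa^\kappa$ such that, for every $x\in X$, $h\not\leq_\kappa^* f_x$. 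In particular, for each $x$ there is at least one $\alpha$ with $f_x(\alpha)<h(\alpha)$ (in fact there are $\kappa$ many). Put
\[
\mathcal{V}_\alpha=\{U_{\alpha,\beta}:\beta\leq h(\alpha)\}\in[\mathcal{U}_\alpha]^{<\kappa}.
\]
Then every $x\in X$ lies in $U_{\alpha,f_x(\alpha)}\in\mathcal{V}_\alpha$ for such an $\alpha$, so $\bigcup_\alpha\mathcal{V}_\alpha$ covers $X$. Padding each $\mathcal{V}_\alpha$ by one new member, chosen recursively and pairwise distinct as in the proof of Theorem~\ref{thm3}, makes the union have size $\kappa$. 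So $X$ is $\kappa$-Menger. This direction does not even use $\kappa^{<\kappa}=\kappa$.

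\emph{Upper bound.} Fix a dominating family $D\subseteq[\kappa]^\kappa$ of size $\dk$. Close it under the operations $x\mapsto\max(x,s)$, where $s$ ranges over increasing functions that agree with the identity outside an initial segment of length $<\kappa$. Call the result $D'$. The pointwise max of two increasing functions is increasing, so $D'\subseteq[\kappa]^\kappa$. Since $\kappa^{<\kappa}=\kappa$, we have $|D'|=\dk\cdot\kappa=\dk$.

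I will show that $D'$ is not $\kappa$-Menger. For $\alpha,\beta<\kappa$ let
\[
U_{\alpha,\beta}=\{x\in[\kappa]^\kappa : x(\alpha)\leq\beta\}=\{x : |x\cap(\beta+1)|\geq\alpha+1\}.
\]
Whether $x$ belongs to this set is decided by $x\cap(\beta+1)$, i.e.\ by fewer than $\kappa$ coordinates, so it is open in $P(\kappa)$. For each $\alpha$, the sets $U_{\alpha,\beta}$, $\beta<\kappa$, form an open cover of $D'$ of size $\kappa$. If $D'$ were $\kappa$-Menger, we could pick $\mathcal{V}_\alpha\in[\mathcal{U}_\alpha]^{<\kappa}$ covering $D'$ jointly. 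Regularity of $\kappa$ gives $g(\alpha)=\sup\{\beta : U_{\alpha,\beta}\in\mathcal{V}_\alpha\}<\kappa$, and we may take $g$ increasing. Choose $x\in D$ with $g+1\leq_\kappa^* x$, and let $x'=\max(x,s)$ for a suitable $s$ that fixes the fewer than $\kappa$ exceptional coordinates. Then $x'\in D'$ and $x'(\alpha)>g(\alpha)$ for all $\alpha$, so $x'$ lies in no member of $\bigcup_\alpha\mathcal{V}_\alpha$, a contradiction.

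The main obstacle I expect is this upper-bound bookkeeping. One must check that the sets $U_{\alpha,\beta}$ are genuinely open in the topology on $P(\kappa)$ and that the covers have size exactly $\kappa$; if needed, discard repetitions, or work in $\kappa^\kappa$ and transfer via the identification of Section~\ref{sec-cb}. One must also check that the closure $D'$ stays inside $[\kappa]^\kappa$ with cardinality $\dk$, which is exactly where $\kappa^{<\kappa}=\kappa$ is used. The lower bound is routine.
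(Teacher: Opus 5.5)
Your proof is correct. The paper does not prove this statement itself; it imports it from \cite[Corollary~4.3]{ucsp}. The closest in-paper model is Theorem~\ref{thm2}, which goes through a continuous-image characterization (Lemma~\ref{lemma7}) for zero-dimensional $\kappa$-Lindel\"of $P_\kappa$-spaces. There, $\kappa^{<\kappa}=\kappa$ is used to make every subspace of $P(\kappa)$ $\kappa$-Lindel\"of.

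You bypass that machinery. Your lower bound is a direct, non-continuous coding $x\mapsto f_x$ of the given covers, in the same spirit as the paper's proof of Theorem~\ref{lemma11}. Since members of $\mathcal{O}_\kappa$ already have size $\kappa$, no clopen refinement is needed, and the argument shows that every space of size $<\dk$ is $\kappa$-Menger.

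In the upper bound, closing $D$ under $x\mapsto\max(x,s)$ replaces the two pieces of bookkeeping seen in the proof of Lemma~\ref{lemma7}: discarding the coordinates on which the family is bounded, and handling the exceptional set $E$. The closure also settles your worry about cover sizes. It makes $\{x(\alpha):x\in D'\}$ unbounded in $\kappa$ for every $\alpha$, so $\{U_{\alpha,\beta}\cap D':\beta<\kappa\}$ has $\kappa$ distinct members. For a raw dominating family, discarding repetitions would not help: if every member of $D$ contains $0$, all the sets $U_{0,\beta}\cap D$ coincide. The characterization route gives a reusable structural criterion; yours is shorter and more general.

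Two small repairs are needed.
\begin{enumerate}
\item The condition $x(\alpha)\leq\beta$ says that the order type, not the cardinality, of $x\cap(\beta+1)$ exceeds $\alpha$.
\item The ``suitable $s$'' should be exhibited. For example, pick $\beta<\kappa$ with $E\subseteq\beta$. Then pick $\gamma<\kappa$ above $\beta$ and above every $g(\alpha)$, $\alpha<\beta$. Let $\delta$ be the least additively indecomposable ordinal above $\gamma$, and put $s(\alpha)=\gamma+\alpha$ for $\alpha<\delta$ and $s(\alpha)=\alpha$ for $\alpha\geq\delta$. This $s$ is strictly increasing, agrees with the identity from $\delta$ on, and exceeds $g$ on $E$.
\end{enumerate}
Since these modifiers depend only on $\gamma$, $\kappa$ many of them suffice for both the unboundedness and the fixing step. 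Hence $|D'|=\dk\cdot\kappa=\dk$ without any appeal to $\kappa^{<\kappa}=\kappa$, and your argument in fact uses only the regularity of $\kappa$.
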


\begin{Th}[{\cite[Corollary~6.5]{ucsp}}]
\label{thm17}
Assume $\kappa^{<\kappa} = \kappa$. Then $\non(\S1k(\Omega_\kappa, \Gamma_\kappa)) = \pk$.
\end{Th}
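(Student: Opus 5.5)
This is a result quoted from \cite{ucsp} (Corollary~6.5), so I only reconstruct the argument. Write $\mathcal{S}$ for $\S1k(\Omega_\kappa,\Gamma_\kappa)$. I will prove $\non(\mathcal{S})=\pk$ by two inequalities. The main tool is a dictionary between $\kappa$-covers and $\kappa$-centered families, and between $\gamma_\kappa$-covers and $\kappa$-pseudointersections. Throughout I use Fact~\ref{fact1}, Lemma~\ref{lemincl}, and the equivalence $\S1k(\Omega_\kappa,\Gamma_\kappa)\leftrightarrow\Sk(\Omega_\kappa,\Gamma_\kappa)$ from Theorem~\ref{thm8}. The latter applies because subspaces of $P(\kappa)$ are $P_\kappa$-spaces.

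\emph{Lower bound: every $X\subseteq P(\kappa)$ with $|X|<\pk$ satisfies $\mathcal{S}$.}
\begin{enumerate}[label=(\roman*)]
\item Fix $(\mathcal{U}_\xi:\xi<\kappa)\in\Omega_\kappa(X)^\kappa$ and enumerate $\mathcal{U}_\xi=\{U^\xi_\beta:\beta<\kappa\}$.
\item Code everything by subsets of $\kappa\times\kappa\cong\kappa$: for $x\in X$ let $a_x=\{(\xi,\beta): x\in U^\xi_\beta\}$.
\item The family $\{a_x:x\in X\}$, together with the tail sets $\{(\xi,\beta):\xi\ge\eta\}$ for $\eta<\kappa$, is $\kappa$-centered. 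Indeed, for $A\in[X]^{<\kappa}$ and each $\xi$, Fact~\ref{fact1} gives $\kappa$ many $\beta$ with $A\subseteq U^\xi_\beta$.
\item This family has size $<\pk$ (as $\kappa<\pk$ holds, or is absorbed), so it has a $\kappa$-pseudointersection $b$.
\item Since $b$ is almost contained in every tail set, its projection to the first coordinate is unbounded in $\kappa$, with fewer than $\kappa$ points of $b$ in each column. So I can pick a $\kappa$-sized set of columns $\xi$ and one $(\xi,\beta_\xi)\in b$ in each.
\item The chosen sets $U^\xi_{\beta_\xi}$ form a $\gamma_\kappa$-cover of $X$, because $b\subseteq^*_\kappa a_x$ for every $x$.
\item Pairwise distinctness and size $\kappa$ follow as in the last paragraph of Theorem~\ref{thm7}. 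For the remaining columns I select arbitrarily, but this spoils the $\gamma_\kappa$ property. It is simpler to obtain a $\Sk$ selection first and then invoke Theorem~\ref{thm8}, or to reindex as in Theorem~\ref{thm7}: assign the covers with indices in blocks of a partition of $\kappa$ and take intersections, using that $X$ is $P_\kappa$.
\end{enumerate}

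\emph{Upper bound: a subset of $P(\kappa)$ of size $\pk$ failing $\mathcal{S}$.}
\begin{enumerate}[label=(\roman*)]
\item Take a $\kappa$-centered family $A\subseteq[\kappa]^\kappa$ of size $\pk$ with no $\kappa$-pseudointersection. Close it under intersections of fewer than $\kappa$ members; this does not change the size, since $\pk^{<\kappa}$ is handled by $\kappa^{<\kappa}=\kappa$ and the regularity of $\pk$, or we can work with $A$ directly.
\item Let $X=A\cup\fink$ with the subspace topology. The natural covers are $U_n=\{x: n\in x\}$ for $n<\kappa$; these are clopen in $P(\kappa)$.
\item To get a $\kappa$-cover of $X$ I need every $F\in[X]^{<\kappa}$ to lie in some $U$. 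For $F\subseteq A$ this holds by centeredness, since $\bigcap F$ is nonempty. The elements of $\fink$ are the problem, so I instead use sets of the form $\{x:x\not\subseteq\alpha\}$ combined with $U_n$, or replace $\fink$ by complements. The cleanest choice seems to be $X=\{\kappa\setminus a: a\in A\}$, or else $X=A$ with covers $\mathcal{U}=\{U_n:n<\kappa\}$.
\item A $\kappa$-cover from the $U_n$'s gives index sets $b$ with $b\subseteq^*_\kappa x$ for all $x\in X$ exactly when $\{U_n:n\in b\}$ is a $\gamma_\kappa$-cover. So a $\gamma_\kappa$ selection yields a $\kappa$-pseudointersection, a contradiction.
\item Using $\kappa$ copies of the same cover, $\mathcal{S}$ fails.
\end{enumerate}

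\emph{Main obstacle.} The hardest points are the technical ones: ensuring $X\notin\mathcal{U}$, that $|\mathcal{U}|=\kappa$, and that selections from distinct covers do not collapse, together with getting from a pseudointersection to one selection per cover. I expect the block-partition trick of Theorems~\ref{thm6} and~\ref{thm7} to handle the latter.
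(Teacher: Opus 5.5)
The paper gives no proof of this statement. It is quoted from \cite{ucsp} (Corollary~6.5), so there is no in-paper argument to compare with. Your reconstruction is the natural Galvin--Miller-type argument and is correct in substance, but several steps are left uncommitted. Here is how to close them.

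\emph{Lower bound.} Since $b\subseteq_\kappa^*\{(\zeta,\beta):\zeta>\xi\}$, every column of $b$ has size $<\kappa$. The direct finish is therefore $\mathcal{V}_\xi=\{U^\xi_\beta:(\xi,\beta)\in b\}\in[\mathcal{U}_\xi]^{<\kappa}$.
\begin{itemize}
\item The members of $\bigcup_{\xi<\kappa}\mathcal{V}_\xi$ not containing $x$ are indexed by points of $b\setminus a_x$, so there are fewer than $\kappa$ of them.
\item Each $U^\xi_\beta$ is a proper subset of $X$, so no single set is indexed by $\kappa$ many points of $b$ (pick $y$ outside it and look at $b\setminus a_y$). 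By regularity the union has size $\kappa$.
\end{itemize}
Thus $X$ satisfies $\Sk(\Omega_\kappa,\Gamma_\kappa)$. Theorem~\ref{thm8} then gives $\S1k(\Omega_\kappa,\Gamma_\kappa)$, since subspaces of $P(\kappa)$ are $P_\kappa$-spaces.

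Picking one point in $\kappa$ many columns, as in (v)--(vi), is not enough by itself. The block-partition variant you mention is also not the device of Theorem~\ref{thm7}, which uses $\bigcap_{\xi\le\alpha}$ and $d(\xi)=\min(A\setminus\xi)$. The $\Sk$ route makes both unnecessary.

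You should also justify $\kappa<\pk$. For a $\kappa$-centered family $\{c_\alpha:\alpha<\kappa\}$, choosing $e_\alpha\in\bigcap_{\beta\le\alpha}c_\beta\setminus\{e_\gamma:\gamma<\alpha\}$ gives a $\kappa$-pseudointersection.

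\emph{Upper bound.} Commit to $X=A$. With $A\cup\fink$ the sets $U_n$ do not even cover, since $\emptyset\in\fink$ lies in none of them. The choice $\{\kappa\setminus a:a\in A\}$ is only the complemented copy of the same construction.

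The worry $X\in\mathcal{U}$ is resolved as follows. Note that $|\bigcap A|<\kappa$, since otherwise $\bigcap A$ would be a $\kappa$-pseudointersection of $A$. Take $\mathcal{U}=\{U_n\cap X:n\in\kappa\setminus\bigcap A\}$. For $F\in[A]^{<\kappa}$ the set $\bigcap F$ has size $\kappa$, so it contains some $n\notin\bigcap A$; hence $\mathcal{U}$ is a $\kappa$-cover. It has size $\kappa$ by Fact~\ref{fact1}.

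In (iv), ``exactly when'' is wrong as stated, because $n\mapsto U_n\cap X$ need not be injective. Instead, take a selection $\{U_{n_\xi}\cap X:\xi<\kappa\}\in\Gamma_\kappa(X)$ from $\kappa$ copies of $\mathcal{U}$. Let $b$ contain one index $n_\xi$ for each distinct selected set. Then $|b|=\kappa$, and $b\setminus x$ injects into the set of selected members not containing $x$. So $b$ is a $\kappa$-pseudointersection of $A$, a contradiction.

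Finally, delete the closure of $A$ under intersections of fewer than $\kappa$ members in (i). It is not needed, and the reasons you give do not justify the equality $\pk^{<\kappa}=\pk$ it requires. If the closure were larger than $\pk$, it would no longer witness $\non(\S1k(\Omega_\kappa,\Gamma_\kappa))\le\pk$.
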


For $A\subseteq \kappa^\kappa$, we define \[\maxk(A) = \left\{\alpha \mapsto \sup \{f(\alpha) : f\in F\} : F\in [A]^{<\kappa}\right\}\] with the usual convention that $\sup \emptyset = 0$. Following \cite[Definition~1.11]{mrs05}, a family $A \subseteq \kappa^\kappa$ is said to be $\lessk$-dominating if for each $g\in \kappa^\kappa$, there exists $F\in [A]^{<\kappa}$ such that $g(\alpha) < \sup \{f(\alpha) : f\in F\}$ for all $\alpha< \kappa$. The $\lessk$-dominating number is defined by \[\dlk = \min \{|A| : A\subseteq \kappa^\kappa \text{ is $\lessk$-dominating}\}.\]

\begin{Lemma}
\label{lemma8}
Assume that $\kappa$ is regular. Then \[\dlk = \min \{|A| : A \subseteq \kappa^\kappa \text{ and } \maxk(A) \text{ is dominating}\}.\]
\end{Lemma}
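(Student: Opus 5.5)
We prove the two inequalities between $\dlk$ and the cardinal $\mu = \min \{|A| : A \subseteq \kappa^\kappa,\ \maxk(A) \text{ dominating}\}$.

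For $\mu \leq \dlk$, let $A$ be $\lessk$-dominating. Given $g \in \kappa^\kappa$, choose $F \in [A]^{<\kappa}$ with $g(\alpha) < \sup\{f(\alpha) : f \in F\}$ for every $\alpha$. Then the function $h_F = (\alpha \mapsto \sup\{f(\alpha): f\in F\})$ belongs to $\maxk(A)$ and dominates $g$ everywhere, so in particular $g \leq_\kappa^* h_F$. We must check that $h_F$ really lies in $\kappa^\kappa$: a supremum of fewer than $\kappa$ ordinals below $\kappa$ is below $\kappa$ by regularity. Hence $\maxk(A)$ is dominating, and $\mu \leq |A|$.

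For $\dlk \leq \mu$, let $A$ be such that $\maxk(A)$ is dominating. An eventual domination $g \leq_\kappa^* h_F$ must be upgraded to strict domination at every coordinate. This is the main obstacle. Two defects need repair: the inequality is only non-strict, and it fails on a set of size $<\kappa$, hence on a bounded set $\alpha < \gamma$ by regularity.

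The plan is to enlarge $A$ by a set of size $\leq \kappa$ that fixes both defects. First note $\mu \geq \kappa$, or even $\mu > \kappa$. Indeed, if $|A| < \kappa$ then $\maxk(A)$ is bounded by a single function, so it is not dominating. If $|A| = \kappa$ then $|\maxk(A)| \le \kappa^{<\kappa}$, and a diagonal argument against $\kappa$ functions shows it is not dominating; it suffices anyway to know $\mu \ge \kappa$ to absorb an extra $\kappa$ functions.

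Let $A' = A \cup \{c_\gamma : \gamma < \kappa\}$, where $c_\gamma$ is the constant function with value $\gamma$. Given $g$, apply the hypothesis to $g' = g+1$ (pointwise successor), obtaining $F \in [A]^{<\kappa}$ with $g'(\alpha) \le h_F(\alpha)$ for all $\alpha \ge \gamma$. This gives $g(\alpha) < h_F(\alpha)$ for those $\alpha$. Next set $\delta = \sup\{g(\alpha)+1 : \alpha < \gamma\}$, which is $< \kappa$ by regularity. Then $F' = F \cup \{c_\delta\}$ has size $<\kappa$ and satisfies $g(\alpha) < \sup_{f \in F'} f(\alpha)$ for every $\alpha < \kappa$. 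Thus $A'$ is $\lessk$-dominating, and $\dlk \le |A| + \kappa = |A|$ since $|A| \ge \kappa$.

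The step needing most care is justifying $|A| \geq \kappa$. We handle it via regularity: if $|A|<\kappa$, the single function $\sup_{f\in A} f + 1$ is not dominated by any element of $\maxk(A)$.
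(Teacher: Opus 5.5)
Your proof is correct and follows the paper's argument: the easy direction is the same, and for the converse you likewise show $|A|\ge\kappa$, apply domination to $g+1$, bound the exceptional set by regularity, and patch the initial segment by adjoining $\kappa$ extra functions. Your constant functions $c_\delta$ work just as well as the paper's $p_{\beta,\gamma}$ and are slightly simpler. The side claim $\mu>\kappa$ is true but not justified as written when $\kappa^{<\kappa}>\kappa$. The fix is to enumerate $A=\{f_i:i<\kappa\}$ and note that $\maxk(A)$ is pointwise bounded by the $\kappa$ functions $\sup_{i<j}f_i$, $j<\kappa$. As you say, though, only $\mu\ge\kappa$ is needed.
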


\begin{proof}
Put $\delta = \min \{|A| : A \subseteq \kappa^\kappa \text{ and } \maxk(A) \text{ is dominating}\}$. We first show that $\delta \leq \dlk$. Let $A \subseteq \kappa^\kappa$ be a $\lessk$-dominating family. For each $g\in \kappa^\kappa$, there exists $F\in [A]^{<\kappa}$ such that $g(\alpha) < \sup \{f(\alpha) : f\in F\}$ for all $\alpha < \kappa$. Define $h_F\in \kappa^\kappa$ by $h_F(\alpha) = \sup \{f(\alpha) : f\in F\}$ for all $\alpha< \kappa$. Then $h_F\in \maxk(A)$ and $g\leq_\kappa^* h_F$. Hence $\maxk(A)$ is dominating. It follows that $\delta \leq \dlk$.

For the reverse inequality, let $A \subseteq \kappa^\kappa$ satisfy $|A| = \delta$ and suppose that $\maxk(A)$ is dominating. Notice first that $|A| \geq \kappa$. Indeed, if $|A|< \kappa$, then by the regularity of $\kappa$, the function $h\in \kappa^\kappa$ defined by $h(\alpha) = \sup \{f(\alpha) : f\in A\}$ is well defined. Every member of $\maxk(A)$ is pointwise bounded by $h$, and hence $\maxk(A)$ cannot be dominating, a contradiction.

For $\beta, \gamma< \kappa$, define $p_{\beta,\gamma} \in \kappa^\kappa$ by
\[p_{\beta,\gamma}(\alpha) = \begin{cases}
\gamma, & \mbox{if } \alpha< \beta\\
0, & \mbox{if } \beta \leq \alpha< \kappa.
\end{cases}\]
Let $B = A\cup \{p_{\beta,\gamma} : \beta, \gamma< \kappa\}$. Since $|A| \geq \kappa$, $|B| = |A| = \delta$. We claim that $B$ is $\lessk$-dominating. Let $g\in \kappa^\kappa$, and define $g^+ \in \kappa^\kappa$ by $g^+(\alpha) = g(\alpha) + 1$ for all $\alpha< \kappa$. Since $\maxk(A)$ is dominating, there exists $F\in [A]^{<\kappa}$ such that $g^+ \leq_\kappa^* h_F$, where $h_F(\alpha) = \sup \{f(\alpha) : f\in F\}$. Thus $E = \{\alpha< \kappa : g(\alpha) + 1> h_F(\alpha)\}$ has size less than $\kappa$. Since $\kappa$ is regular, $E$ is not cofinal in $\kappa$. Choose $\beta< \kappa$ such that $E \subseteq \beta$. Again by the regularity of $\kappa$, there exists $\gamma< \kappa$ such that $g(\alpha) < \gamma$ for every $\alpha< \beta$. Set $G = F\cup \{p_{\beta,\gamma}\}$. Clearly, $G\in [B]^{<\kappa}$. If $\alpha< \beta$, then $g(\alpha) < \gamma = p_{\beta,\gamma}(\alpha) \leq \sup \{f(\alpha) : f\in G\}$.

If $\beta \leq \alpha< \kappa$, then $\alpha \notin E$, and hence $g(\alpha) + 1\leq h_F(\alpha)$. Therefore, $g(\alpha) < h_F(\alpha) \leq \sup \{f(\alpha) : f\in G\}$. Consequently, $g(\alpha) < \sup \{f(\alpha) : f\in G\}$ for all $\alpha< \kappa$. Thus $B$ is $\lessk$-dominating, and hence $\dlk \leq |B| = \delta$. Combining the two inequalities, we obtain $\dlk = \delta$.
\end{proof}

The following observations concerning $\dlk$ follow from results of Matet--Ros{\l}anowski--Shelah \cite{mrs05}, together with the definition of $\dlk$. For the convenience of the reader, we include a proof using the preceding lemma.

\begin{Lemma}
\label{lemma9}
Assume that $\kappa$ is regular.
\begin{enumerate}[wide=0pt, label={\upshape(\arabic*)}, leftmargin=*, ref={\theLemma(\arabic*)}]
  \item\label{lemma901} Then $\bk \leq \dlk \leq \dk$.
  \item\label{lemma902} If $\lambda^{<\kappa} < \dk$ for every $\lambda < \dk$, then $\dlk = \dk$.
\end{enumerate}
\end{Lemma}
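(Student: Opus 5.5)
We work with the characterization of $\dlk$ given by Lemma~\ref{lemma8}: $\dlk$ is the least $|A|$ with $A\subseteq\kappa^\kappa$ and $\maxk(A)$ dominating. Both parts of Lemma~\ref{lemma9} then follow by comparing sizes of families of functions.

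\emph{Part (1), upper bound $\dlk\le\dk$.} Let $D$ be a dominating family of size $\dk$. Since $D\subseteq\maxk(D)$ (take singletons $F=\{f\}$), the family $\maxk(D)$ is dominating. By Lemma~\ref{lemma8}, $\dlk\le|D|=\dk$.

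\emph{Part (1), lower bound $\bk\le\dlk$.} Suppose $A\subseteq\kappa^\kappa$ has $|A|<\bk$. We show that $\maxk(A)$ is not dominating. Since $|A|<\bk$, there is $g\in\kappa^\kappa$ with $f\le_\kappa^* g$ for all $f\in A$.
\begin{itemize}
\item[] Fix $F\in[A]^{<\kappa}$. Each $f\in F$ satisfies $f(\alpha)\le g(\alpha)$ outside a set $E_f$ of size $<\kappa$.
\item[] By regularity of $\kappa$, the union $E=\bigcup_{f\in F}E_f$ has size $<\kappa$.
\item[] Outside $E$ we get $\sup_{f\in F}f(\alpha)\le g(\alpha)$. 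Hence every member of $\maxk(A)$ is $\le_\kappa^*$-below $g$.
\end{itemize}
Now suppose $\maxk(A)$ were dominating. Then $g+1\le_\kappa^* h$ for some $h\in\maxk(A)$, while also $h\le_\kappa^* g$. This would force $g(\alpha)+1\le g(\alpha)$ on a set of size $\kappa$, which is impossible. So $\maxk(A)$ is not dominating. By Lemma~\ref{lemma8}, every $A$ with $\maxk(A)$ dominating has $|A|\ge\bk$, that is, $\bk\le\dlk$.

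\emph{Part (2).} Let $A$ witness $\dlk$, i.e.\ $|A|=\dlk$ and $\maxk(A)$ dominating, and suppose toward a contradiction that $\lambda=|A|<\dk$. Each member of $\maxk(A)$ is determined by some $F\in[A]^{<\kappa}$. Therefore
\[
|\maxk(A)|\le|[A]^{<\kappa}|\le\lambda^{<\kappa}<\dk
\]
by the hypothesis. But $\maxk(A)$ is dominating, so $|\maxk(A)|\ge\dk$, a contradiction. Hence $\dlk\ge\dk$, and together with part (1) we get $\dlk=\dk$.

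The only step needing care is the lower bound in part (1). It uses regularity of $\kappa$ to absorb fewer than $\kappa$ exceptional sets, and the $g+1$ trick to rule out $g$ itself being dominated. One should also note the finite-$|A|$ and small-$|A|$ cases, but these are already covered: a family of size $<\bk$ is bounded regardless of its size.
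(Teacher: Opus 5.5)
Your proposal is correct and follows essentially the same route as the paper: both parts go through Lemma~\ref{lemma8}. The lower bound bounds every member of $\maxk(A)$ by a single $g$ using regularity of $\kappa$, the upper bound uses that $\maxk(D)$ is dominating whenever $D$ is, and part (2) is the count $|\maxk(A)|\le\lambda^{<\kappa}<\dk$; your explicit $g+1$ argument that a bounded family cannot be dominating just fills in a step the paper asserts without detail.
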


\begin{proof}
$(1)$. We first prove that $\bk \leq \dlk$. Let $A\subseteq \kappa^\kappa$ with $|A|< \bk$. By the definition of $\bk$, the family $A$ is bounded. Thus there exists $g\in \kappa^\kappa$ such that $f \leq_\kappa^* g$ for every $f\in A$. We claim that $\maxk(A)$ is also bounded by $g$. Indeed, let $h\in \maxk(A)$. Then there exists $F\in [A]^{<\kappa}$ such that $h(\alpha) = \sup \{f(\alpha) : f\in F\}$ for all $\alpha< \kappa$. For each $f\in F$, put $E_f = \{\alpha< \kappa : f(\alpha) > g(\alpha)\}$. Since $f \leq_\kappa^* g$, we have $|E_f|< \kappa$. As $|F|< \kappa$ and $\kappa$ is regular, $|\bigcup_{f\in F} E_f|< \kappa$.

For each $\alpha \in \kappa \setminus \bigcup_{f\in F} E_f$, we have $f(\alpha) \leq g(\alpha)$ for every $f\in F$, and hence $h(\alpha) = \sup \{f(\alpha) : f\in F\} \leq g(\alpha)$. Therefore, $h \leq_\kappa^* g$. Since $h\in \maxk(A)$ was arbitrary, $\maxk(A)$ is bounded and hence cannot be dominating. By Lemma~\ref{lemma8}, $\bk \leq \dlk$.

Since for every dominating family $D \subseteq \kappa^\kappa$, $\maxk(D)$ is also dominating, again by Lemma~\ref{lemma8}, we get $\dlk \leq \dk$.

$(2)$. Suppose that $\lambda^{<\kappa} < \dk$ for every $\lambda< \dk$. Assume, towards a contradiction, that there exists a family $A\subseteq \kappa^\kappa$ such that $\maxk(A)$ is dominating and $|A| = \lambda< \dk$. Since \[|\maxk(A)| \leq |[A]^{<\kappa}| \leq \lambda^{<\kappa},\] we obtain $|\maxk(A)| < \dk$. By the definition of $\mathfrak d_\kappa$, $\maxk(A)$ cannot be dominating, which is a contradiction. Hence by Lemma~\ref{lemma8}, $\dk \leq \dlk$. From $(1)$, we have $\dlk = \dk$.
\end{proof}

The following characterization is the uncountable analogue of the classical characterization of the Scheepers property in terms of continuous images (see \cite[Theorem~2.1]{tsaban01}). Its proof follows the classical argument, with finite maxima replaced by pointwise suprema of fewer than $\kappa$ many functions.

\begin{Lemma}
\label{lemma7}
Assume that $\kappa$ is regular. Let $X$ be a zero-dimensional $\kappa$-Lindel\"{o}f $P_\kappa$-space. Then $X$ is $\kappa$-Scheepers if and only if for every continuous image $Y$ of $X$ into $\kappa^\kappa$, the family $\maxk(Y)$ is not dominating.
\end{Lemma}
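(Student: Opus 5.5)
We prove the two directions separately, following the classical argument of \cite[Theorem~2.1]{tsaban01}. Finite unions are replaced by unions of fewer than $\kappa$ sets, and finite maxima by $\maxk$. Throughout, ``$\maxk(Y)$ dominating'' is read with respect to $\leq_\kappa^*$.

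\emph{($\Rightarrow$).} Assume $X$ is $\kappa$-Scheepers and let $f\colon X\to \kappa^\kappa$ be continuous with $Y=f[X]$. The property $\Uk(\mathcal{O}_\kappa,\Omega_\kappa)$ is preserved by continuous images, because preimages of the covers are used. It is also hereditary for closed subsets, and $\kappa^\kappa$ embeds as a closed subspace of $(\kappa+1)^\kappa$ or of a similar space. Rather than rely on these facts, however, I will work directly on $X$.

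For $\alpha,\beta<\kappa$ put $U^\alpha_\beta=\{x : f(x)(\alpha)\le\beta\}$. This set is clopen, since it is a preimage of a basic clopen set. For fixed $\alpha$ the family $\{U^\alpha_\beta:\beta<\kappa\}$ is increasing and covers $X$.

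\emph{Case 1.} For some $\alpha$ this family has a subcover of size less than $\kappa$. Then $f(\cdot)(\alpha)$ is bounded on $X$ below some $\beta_\alpha$. If this happens for $\kappa$ many $\alpha$, I will absorb it into the argument by a case split or a coordinate-by-coordinate modification: replace $f$ by $f'(x)(\alpha)=f(x)(\alpha)$ on the unbounded coordinates and handle the bounded coordinates by the constant bound. Alternatively, I will modify the covers by adding dummy distinct sets so that the hypothesis ``no subcover of size $<\kappa$'' holds.

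\emph{Case 2 (the main case).} Apply $\Uk$ and obtain $\beta_\alpha<\kappa$ such that $\{U^\alpha_{\beta_\alpha}:\alpha<\kappa\}$ is a $\kappa$-cover. Here each $\mathcal{V}_\alpha$ has size less than $\kappa$ and is increasing, so by regularity its union is $U^\alpha_{\beta_\alpha}$ with $\beta_\alpha=\sup$. Define $g(\alpha)=\beta_\alpha+1$.

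Given $F\in[Y]^{<\kappa}$, pick preimages $A\in[X]^{<\kappa}$. By Fact~\ref{fact1}, $A\subseteq U^\alpha_{\beta_\alpha}$ for $\kappa$ many $\alpha$; one has to check that these are $\kappa$ many distinct indices, which follows from the sets being distinct or from a repetition argument. At each such $\alpha$ we get $\sup_{y\in F}y(\alpha)\le \beta_\alpha<g(\alpha)$. Hence $g\not\le_\kappa^*\sup F$, so no member of $\maxk(Y)$ dominates $g$, and $\maxk(Y)$ is not dominating.

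\emph{($\Leftarrow$).} Let $(\mathcal{U}_\alpha)\in\mathcal{O}_\kappa(X)^\kappa$ with no subcover of size less than $\kappa$. Since $X$ is zero-dimensional, $\kappa$-Lindel\"of and a $P_\kappa$-space, each $\mathcal{U}_\alpha$ can be refined to a disjoint clopen cover $\{C^\alpha_\beta:\beta<\kappa\}$. To do this, take a clopen refinement of size $\le\kappa$, enumerate it, and set $C_\beta=B_\beta\setminus\bigcup_{\eta<\beta}B_\eta$, which is clopen by the $P_\kappa$ property. Assign each $C^\alpha_\beta$ to some member of $\mathcal{U}_\alpha$ containing it.

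Define $f(x)(\alpha)=$ the least $\gamma$ such that $x\in\bigcup\{C^\alpha_\beta:\beta\le\gamma\}$, or more simply the unique $\beta$ with $x\in C^\alpha_\beta$. This $f$ is continuous into $\kappa^\kappa$, because basic open sets are intersections of fewer than $\kappa$ clopen sets. By hypothesis there is $g$ with $g\not\le^*_\kappa h$ for all $h\in\maxk(f[X])$, that is, $\{\alpha: h(\alpha)<g(\alpha)\}$ has size $\kappa$. Let $\mathcal{V}_\alpha$ be the sets of $\mathcal{U}_\alpha$ assigned to $C^\alpha_\beta$ for $\beta<g(\alpha)$; there are fewer than $\kappa$ of them. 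For $A\in[X]^{<\kappa}$, apply this to $h=\sup f[A]$: for $\kappa$ many $\alpha$ we get $A\subseteq\bigcup\mathcal{V}_\alpha$.

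It remains to ensure $X\notin\{\bigcup\mathcal{V}_\alpha\}$ and that this family has size $\kappa$. Since no $\mathcal{U}_\alpha$ has a subcover of size less than $\kappa$, each $\bigcup\mathcal{V}_\alpha\ne X$. Size $\kappa$ follows because each proper set covers a given point set $A$ only if $A$ avoids the missing point, so combined with Fact~\ref{fact1}-style reasoning, fewer than $\kappa$ sets cannot form a $\kappa$-cover when $|X|\ge\kappa$.

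I expect the main obstacle to be the bookkeeping in the forward direction. The point is that the given functions are arbitrary elements of $\kappa^\kappa$, which forces the degenerate Case 1 to be handled, together with the distinctness and size-$\kappa$ requirements on the resulting cover.
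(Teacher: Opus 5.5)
\textbf{Gap: the bounded coordinates in the forward direction are never actually handled.} You split into ``some coordinate is bounded'' versus ``no coordinate is bounded''. In Case~2 you then apply $\Uk(\mathcal{O}_\kappa,\Omega_\kappa)$ to the whole sequence $(\{U^\alpha_\beta:\beta<\kappa\}:\alpha<\kappa)$. That is only legitimate when \emph{every} coordinate is unbounded, since a bounded coordinate gives a cover containing $X$ itself. Case~1 is left as a menu of strategies, and one of them cannot work. Adding dummy distinct sets to a cover that already has a subcover of size $<\kappa$ leaves that small subcover in place, so the hypothesis of $\Uk$ is still not met. You identify this as the main obstacle, but the proposal does not resolve it.

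\textbf{How to fix it.} The paper uses a different split. Let $B$ be the set of $\alpha$ for which $\{f(x)(\alpha):x\in X\}$ is bounded in $\kappa$, say by $b_\alpha<\kappa$.
\begin{enumerate}
\item If $|B|=\kappa$, put $g(\alpha)=b_\alpha+1$ on $B$ and $g=0$ elsewhere. Every $h\in\maxk(Y)$ satisfies $h(\alpha)\le b_\alpha<g(\alpha)$ for all $\alpha\in B$, so $g\not\leq_\kappa^* h$. This finishes the case without using the Scheepers property at all.
\item If $|B|<\kappa$, enumerate $\kappa\setminus B=\{a_\xi:\xi<\kappa\}$. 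Each cover $\{U^{a_\xi}_\beta:\beta<\kappa\}$ has size $\kappa$ and no subcover of size $<\kappa$, by regularity. Apply $\Uk$ only to these covers, set $g(a_\xi)=\beta_\xi+1$ and $g=0$ on $B$. Your Fact~\ref{fact1} argument then gives $\kappa$ many $\xi$ with $h(a_\xi)<g(a_\xi)$.
\end{enumerate}

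\textbf{Minor points in the main case.} $\bigcup\mathcal{V}_\alpha$ need not equal $U^\alpha_{\beta_\alpha}$. When the supremum is not attained it is $\{x:f(x)(\alpha)<\beta_\alpha\}$. You only use the inclusion $\bigcup\mathcal{V}_\alpha\subseteq U^\alpha_{\beta_\alpha}$, which does hold. It is also cleaner to apply Fact~\ref{fact1} directly to the $\kappa$-cover $\{\bigcup\mathcal{V}_\alpha:\alpha<\kappa\}$: distinct sets automatically come from distinct indices. In the reverse direction, the size-$\kappa$ argument does not need $|X|\ge\kappa$. If there were fewer than $\kappa$ distinct proper sets $\bigcup\mathcal{V}_\alpha$, choosing one point outside each would give a member of $[X]^{<\kappa}$ contained in none of them.

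\textbf{What is correct.} Apart from this, your argument is sound and essentially the paper's. The reverse direction matches it, and you supply the verification the paper leaves as ``it can be shown''. Your main case constructs $g$ directly on $X$ via preimages. The paper instead argues by contradiction on $Y$, tacitly using that $\kappa$-Scheepers passes to continuous images; your version avoids that step.
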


\begin{proof}
Suppose first that $X$ is $\kappa$-Scheepers. Let $Y$ be a continuous image of $X$ into $\kappa^\kappa$. We show that $\maxk(Y)$ is not dominating. Suppose, to the contrary, that $\maxk(Y)$ is dominating. Put \[A = \{\alpha< \kappa : \{y(\alpha) : y\in Y\} \text{ is cofinal in } \kappa\}.\] We claim that $|\kappa \setminus A|< \kappa$. Otherwise, for each $\alpha\in \kappa \setminus A$, choose $b_\alpha< \kappa$ such that $y(\alpha)\leq b_\alpha$ for every $y\in Y$. Define $g\in \kappa^\kappa$ by
\[g(\alpha) = \begin{cases}
b_\alpha+1, & \mbox{if } \alpha\in \kappa \setminus A\\
0, & \mbox{otherwise}.
\end{cases}\]
Then for every $F\in [Y]^{<\kappa} \setminus \{\emptyset\}$ and every $\alpha\in \kappa \setminus A$, \[\sup \{f(\alpha) : f\in F\}\leq b_\alpha < b_\alpha+1 = g(\alpha).\] Since $|\kappa \setminus A| = \kappa$, this shows that $g\not \leq_\kappa^* h$ for every $h\in \maxk(Y)$, contradicting the assumption that $\maxk(Y)$ is dominating. Thus $|\kappa \setminus A|< \kappa$, and hence $|A| = \kappa$.

Write $A = \{a_\alpha : \alpha< \kappa\}$. For $\alpha, \xi< \kappa$, let $U_\xi^{(\alpha)} = \{y\in Y : y(a_\alpha) \leq \xi\}$ and $\mathcal{U}_\alpha = \{U_\xi^{(\alpha)} : \xi< \kappa\}$. Since $\{y(a_\alpha) : y\in Y\}$ is cofinal in $\kappa$, $\mathcal{U}_\alpha$ is an open cover of $Y$ of size $\kappa$ having no subcover of cardinality less than $\kappa$. Since $Y$ is $\kappa$-Scheepers, for each $\alpha< \kappa$, there is $\mathcal{V}_\alpha \in [\mathcal{U}_\alpha]^{<\kappa}$ such that $\mathcal{V} = \{\bigcup \mathcal{V}_\alpha : \alpha< \kappa\} \in \Omega_\kappa(Y)$. Define $c\in \kappa^\kappa$ by
\[c(\beta) = \begin{cases}
\sup\{\xi+1 : U_\xi^{(\alpha)} \in \mathcal{V}_\alpha\}, & \mbox{if } \beta = a_\alpha \text{ for some } \alpha< \kappa\\
0, & \mbox{if } \beta\in \kappa \setminus A.
\end{cases}\]
Moreover, $\bigcup \mathcal{V}_\alpha \subseteq \{y\in Y : y(a_\alpha) < c(a_\alpha)\}$ for all $\alpha< \kappa$. Choose $d\in \kappa^\kappa$ given by $d(\alpha) = c(\alpha)+1$ for all $\alpha< \kappa$. Since $\maxk(Y)$ is dominating, there is $F\in [Y]^{<\kappa}$ such that \[d\leq_\kappa^* \left(\alpha \mapsto \sup \{f(\alpha) : f\in F\}\right).\] Thus there is $E\in [\kappa]^{<\kappa}$ such that for every $\beta\in \kappa \setminus E$, $c(\beta)+1 \leq \sup \{f(\beta) : f\in F\}$. It follows that for every $\alpha< \kappa$ with $a_\alpha \notin E$, we have $F\not \subseteq \bigcup \mathcal{V}_\alpha$.

For each $\alpha< \kappa$ with $a_\alpha \in E$, choose $y_\alpha \in
Y \setminus \bigcup \mathcal{V}_\alpha$, which is possible since $\mathcal{V}$ is a $\kappa$-cover. Then $H = F\cup \{y_\alpha : a_\alpha \in E\}$ has size less than $\kappa$, but $H\not \subseteq \bigcup \mathcal{V}_\alpha$ for every $\alpha< \kappa$. This contradicts $\mathcal{V} \in \Omega_\kappa(Y)$. Hence $\maxk(Y)$ is not dominating.

Conversely, suppose that for every continuous image $Y$ of $X$ into $\kappa^\kappa$, the family $\maxk(Y)$ is not dominating. Let $(\mathcal{U}_\alpha : \alpha< \kappa) \in \mathcal{O}_\kappa(X)^\kappa$. For each $\alpha< \kappa$, using zero-dimensionality and $\kappa$-Lindel\"{o}fness, choose a clopen refinement $\{C_\xi^{(\alpha)} : \xi< \kappa\}$ of $\mathcal{U}_\alpha$ which covers $X$. For each $\xi< \kappa$, choose $U_\xi^{(\alpha)} \in \mathcal{U}_\alpha$ such that $C_\xi^{(\alpha)} \subseteq U_\xi^{(\alpha)}$. Put $D_\xi^{(\alpha)} = \bigcup_{\eta \leq \xi} C_\eta^{(\alpha)}$.

Define $\Phi : X\to \kappa^\kappa$ by $\Phi(x)(\alpha) = \min \{\xi< \kappa : x\in D_\xi^{(\alpha)}\}$. Then $\Phi$ is continuous. Let $Y = \Phi[X]$. By assumption, $\maxk(Y)$ is not dominating. There exists $g\in \kappa^\kappa$ such that $g\not \leq_\kappa^* h$ for every $h\in \maxk(Y)$. For each $\alpha< \kappa$, let $\mathcal{V}_\alpha = \{U_\xi^{(\alpha)} : \xi\leq g(\alpha)\}$. It can be shown that $\{\bigcup \mathcal{V}_\alpha : \alpha< \kappa\}\in \Omega_{\kappa}(X)$. Hence $X$ is $\kappa$-Scheepers.
\end{proof}

\begin{Th}
\label{thm2}
Assume $\kappa^{<\kappa} = \kappa$. Then $\non(\kappa\text{-Scheepers}) = \dlk$.
\end{Th}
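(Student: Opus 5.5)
We will prove the two inequalities $\non(\kappa\text{-Scheepers}) \geq \dlk$ and $\non(\kappa\text{-Scheepers}) \leq \dlk$ separately, using Lemma~\ref{lemma7} and Lemma~\ref{lemma8} throughout. Subspaces of $P(\kappa)\cong 2^\kappa$ are zero-dimensional $P_\kappa$-spaces. Since $\kappa^{<\kappa}=\kappa$, the space $2^\kappa$ has a base of size $\kappa$, so every subspace is $\kappa$-Lindel\"of. Hence Lemma~\ref{lemma7} applies to every $X\subseteq P(\kappa)$.

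For the lower bound, let $X\subseteq P(\kappa)$ with $|X|<\dlk$. Any continuous image $Y\subseteq\kappa^\kappa$ of $X$ satisfies $|Y|\leq|X|<\dlk$. By Lemma~\ref{lemma8}, $\maxk(Y)$ is then not dominating. By Lemma~\ref{lemma7}, $X$ is $\kappa$-Scheepers. Thus every non-$\kappa$-Scheepers subset of $P(\kappa)$ has size at least $\dlk$.

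For the upper bound, we will use Lemma~\ref{lemma8} to choose $A\subseteq\kappa^\kappa$ with $|A|=\dlk$ and $\maxk(A)$ dominating. We then move $A$ into $P(\kappa)$ by the standard coding. For $f\in\kappa^\kappa$, first replace $f$ by a strictly increasing $\tilde f\geq f$, for instance $\tilde f(\alpha)=\sup_{\beta\leq\alpha}(f(\beta)+\beta)+1$; by regularity of $\kappa$ this stays below $\kappa$. View $\tilde f$ as an element of $[\kappa]^\kappa$ via its range. The set $\tilde A=\{\tilde f: f\in A\}$ still has size at most $\dlk$. Moreover $\maxk(\tilde A)$ is still dominating, because pointwise $\sup_{f\in F}\tilde f(\alpha)\geq\sup_{f\in F}f(\alpha)$.

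The identification of $[\kappa]^\kappa$ as a subspace of $P(\kappa)$ with its increasing enumeration in $\kappa^\kappa$ is a homeomorphism, as stated in Section~\ref{sec-cb}. So $X=\tilde A\subseteq P(\kappa)$ has the continuous image $\tilde A$ in $\kappa^\kappa$ with $\maxk$ dominating. By Lemma~\ref{lemma7}, $X$ is not $\kappa$-Scheepers. It remains to check $|X|=\dlk$, or at least $|X|\leq\dlk$, which suffices for the upper bound. Here $|X|\geq\kappa$ by the argument in Lemma~\ref{lemma8}, and possibly fewer elements only strengthens the bound. Hence $\non(\kappa\text{-Scheepers})\leq\dlk$.

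The main point needing care is the coding step. We must confirm that replacing $f$ by $\tilde f$ and passing to $[\kappa]^\kappa$ preserves "$\maxk$ is dominating", and that the homeomorphism claimed in Section~\ref{sec-cb} holds. Monotonicity of $\maxk$ under pointwise enlargement handles the first. For the second, we will simply invoke the stated coincidence of topologies.
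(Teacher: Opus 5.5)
Your proposal is correct and follows the paper's proof: the lower bound via Lemma~\ref{lemma8} and Lemma~\ref{lemma7} is the same, and for the upper bound the paper simply applies Lemma~\ref{lemma7} to the witness $A\subseteq\kappa^\kappa$ from Lemma~\ref{lemma8}, leaving implicit the transfer into $P(\kappa)$ that you carry out explicitly through $[\kappa]^\kappa$. There is one small slip in that transfer: your sample formula $\tilde f(\alpha)=\sup_{\beta\leq\alpha}(f(\beta)+\beta)+1$ need not be strictly increasing (for $f(0)=\omega$, $f(1)=0$ it gives $\tilde f(0)=\tilde f(1)=\omega+1$), but $\tilde f(\alpha)=\bigl(\sup_{\beta\leq\alpha}f(\beta)\bigr)+\alpha+1$ is strictly increasing, dominates $f$ pointwise, and stays below $\kappa$, so the rest of your argument goes through unchanged.
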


\begin{proof}
Let $X\subseteq P(\kappa)$ with $|X|< \dlk$. Since $P(\kappa)$ has a base of size $\kappa$, the space $X$ is $\kappa$-Lindel\"{o}f. Let $Y$ be a continuous image of $X$ into $\kappa^\kappa$. Then $|Y|\leq |X|< \dlk$. By Lemma~\ref{lemma8}, $\maxk(Y)$ is not dominating. Hence by Lemma~\ref{lemma7}, $X$ is $\kappa$-Scheepers. Therefore, $\dlk \leq \non(\kappa\mbox{-Scheepers})$.

For the reverse inequality, let $A\subseteq \kappa^\kappa$ be such that $|A| = \dlk$ and $\maxk(A)$ is dominating (see Lemma~\ref{lemma8}). Since $\maxk(A)$ is dominating, Lemma~\ref{lemma7} implies that $A$ is not $\kappa$-Scheepers. Thus $\non(\kappa\mbox{-Scheepers}) \leq |A| = \dlk$. Consequently, $\non(\kappa\mbox{-Scheepers}) = \dlk$.
\end{proof}

\begin{Cor}
\label{cor3}
Assume $\kappa^{<\kappa} = \kappa$.
\begin{enumerate}[wide=0pt,label={\upshape(\arabic*)},leftmargin=*]
  \item If $\lambda^{<\kappa} < \dk$ for every $\lambda < \dk$, then $\non(\kappa\mbox{-Scheepers}) = \dk$.
  \item If $\dk = \kappa^+$, then $\non(\kappa\mbox{-Scheepers}) = \dk$.
\end{enumerate}
\end{Cor}

\begin{proof}
The proof of $(1)$ follows from Lemma~\ref{lemma902}; we prove only $(2)$. It is enough to show that $\dlk = \dk$. By Lemma~\ref{lemma901}, $\dlk \leq \dk$. It remains to prove the reverse inequality. Let $Y\subseteq \kappa^\kappa$ with $|Y|< \dk =\kappa^+$. Then $|Y| \leq \kappa$. Therefore, \[|\maxk(Y)|
\leq |[Y]^{<\kappa}| \leq |Y|^{<\kappa} \leq \kappa^{<\kappa} = \kappa.\] Since $\kappa< \kappa^+ = \dk$, the family $\maxk(Y)$ cannot be dominating. Hence by Lemma~\ref{lemma8}, $\dlk \geq \dk$. Thus $\dlk = \dk$.
\end{proof}

\begin{Cor}
\label{cor4}
Assume $\kappa^{<\kappa} = \kappa$. If $\bk< \dk$, then there exists a space satisfying $\S1k(\Gamma_\kappa, \Omega_\kappa)$ which is not $\kappa$-Hurewicz.
\end{Cor}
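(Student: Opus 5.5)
The plan is to exhibit a single set $X\subseteq P(\kappa)$ with $|X|=\bk$ that is not $\kappa$-Hurewicz, and then show that every small enough subset of $P(\kappa)$ satisfies $\S1k(\Gamma_\kappa,\Omega_\kappa)$. By Theorem~\ref{thm14}, $\non(\kappa\text{-Hurewicz})=\bk$, so there is $X\subseteq P(\kappa)$ with $|X|=\bk$ that is not $\kappa$-Hurewicz. I expect the natural choice to be (the image in $P(\kappa)$ of) an unbounded family, or a $\bk$-scale. It remains to prove that this $X$ satisfies $\S1k(\Gamma_\kappa,\Omega_\kappa)$, and for this the assumption $\bk<\dk$ has to be used.

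\emph{Key step (translation to functions).} Let $(\mathcal{U}_\alpha:\alpha<\kappa)\in\Gamma_\kappa(X)^\kappa$. Remove $X$ from each $\mathcal{U}_\alpha$ if it occurs, and enumerate $\mathcal{U}_\alpha=\{U^{(\alpha)}_\beta:\beta<\kappa\}$ injectively. For $x\in X$ define $f_x(\alpha)=\min\{\beta_0 : x\in U^{(\alpha)}_\beta \text{ for all } \beta\ge\beta_0\}$. This is $<\kappa$ because $x$ misses fewer than $\kappa$ members of a $\gamma_\kappa$-cover and $\kappa$ is regular. For $F\in[X]^{<\kappa}$ put $f_F(\alpha)=\sup_{x\in F}f_x(\alpha)$, which is again $<\kappa$ by regularity; the family of all such $f_F$ is exactly $\maxk(\{f_x:x\in X\})$. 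Now suppose we find $g\in\kappa^\kappa$ with $g\not\le^*_\kappa f_F$ for every such $F$. Then every $F$ has some $\alpha$ with $g(\alpha)>f_F(\alpha)$, hence $F\subseteq U^{(\alpha)}_{g(\alpha)}$. Increasing $g$ coordinatewise preserves this property, so we may recursively arrange that the selected sets $U^{(\alpha)}_{g(\alpha)}$ are pairwise distinct. The selection is then a $\kappa$-cover of size $\kappa$ not containing $X$, i.e.\ it lies in $\Omega_\kappa(X)$. So the whole task reduces to showing that $\maxk(\{f_x:x\in X\})$ is not dominating. This is the same mechanism as in the converse half of Lemma~\ref{lemma7}.

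\emph{The main obstacle.} The family $\maxk(\{f_x : x\in X\})$ has size at most $|X|^{<\kappa}=\bk^{<\kappa}$, whereas the hypothesis only gives $|X|=\bk<\dk$. If $\bk^{<\kappa}<\dk$, the argument is finished, since no family of size $<\dk$ is dominating. In general, Lemma~\ref{lemma8} gives non-domination only for $|X|<\dlk$, and Lemma~\ref{lemma9} yields only $\bk\le\dlk\le\dk$. My first attempt is therefore to exploit the structure of the set rather than its cardinality. I would take $X$ to be a $\bk$-scale $\{x_\xi:\xi<\bk\}$ (or a $\bk$-scale set, adjoining $\fink$). Since $\bk$ is regular and $\bk>\kappa$, every $F\in[X]^{<\kappa}$ is contained in an initial segment $\{x_\xi:\xi<\eta\}$ with $\eta<\bk$. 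I would then try to show that the functions $f_F$ are $\le^*_\kappa$-bounded by a family of size $\bk<\dk$, namely one bound for each such initial segment. For instance, one could compare each $f_F$ with a function obtained from $f_{x_\eta}$ together with a bounded error coming from $\fink$, in the spirit of the concentration of scale sets on $\fink$. If that comparison fails, I would fall back on the cardinal-arithmetic route. Since $\bk$ is regular and $\kappa^{<\kappa}=\kappa$, one has $\bk^{<\kappa}=\bk\cdot\sup_{\lambda<\bk}\lambda^{<\kappa}$, and I would try to bound this strictly below $\dk$, for example when $\dk=\kappa^+$ fails only trivially, or via Corollary~\ref{cor3}-type hypotheses. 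Once non-domination is secured, the selection described above witnesses $\S1k(\Gamma_\kappa,\Omega_\kappa)$ for $X$, while $X$ fails to be $\kappa$-Hurewicz, which proves the corollary.
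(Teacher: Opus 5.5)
\textbf{There is a genuine gap.} Your translation step is correct. It is essentially the mechanism of the paper's proof of Theorem~\ref{lemma11}: once $\maxk(\{f_x:x\in X\})$ is not dominating, your recursive choice of pairwise distinct $U^{(\alpha)}_{g(\alpha)}$ yields a member of $\Omega_\kappa(X)$.

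The paper finishes by importing from Matet--Ros{\l}anowski--Shelah the implication $\bk<\dk\Rightarrow\bk<\dlk$. It then uses $|X|=\bk<\dlk=\non(S_1^\kappa(\Gamma_\kappa,\Omega_\kappa))$. Your proposal never proves that $\maxk(\{f_x:x\in X\})$ is non-dominating for a set of size $\bk$. That is exactly this implication, and it is the only content of the corollary beyond Theorems~\ref{thm14} and~\ref{lemma11}.

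Neither of your two routes works as stated.
\begin{itemize}
\item The scale-set route appeals to concentration on $\fink$ (Lemma~\ref{lemma17}), which assumes weak compactness; here only $\kappa^{<\kappa}=\kappa$ is given. Moreover, the $f_x$ are determined by the arbitrary covers $\mathcal{U}_\alpha$, so the $\leq_\kappa^*$-order of the scale tells you nothing about how $f_F$ compares with $f_{x_\eta}$.
\item The fallback needs $\bk^{<\kappa}<\dk$. This does not follow from the hypotheses: for singular $\lambda<\bk$ of cofinality $<\kappa$ one has $\lambda^{<\kappa}>\lambda$, and you give no bound on such powers below $\dk$.
\end{itemize}

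\textbf{The missing idea.} The bound for each initial segment comes directly from the definition of $\bk$, and no structure on $X$ is needed. Enumerate $X=\{x_\xi:\xi<\bk\}$ arbitrarily and put $X_\eta=\{x_\xi:\xi<\eta\}$.
\begin{enumerate}
\item For $\eta<\bk$ the family $\{f_x:x\in X_\eta\}$ has size $<\bk$. So there is $g_\eta\in\kappa^\kappa$ with $f_x\leq_\kappa^* g_\eta$ for all $x\in X_\eta$.
\item As in the proof of the first part of Lemma~\ref{lemma9}, regularity of $\kappa$ gives $f_F\leq_\kappa^* g_\eta$ for every $F\in[X_\eta]^{<\kappa}$.
\item Since $\bk$ is regular and $\bk>\kappa$, every $F\in[X]^{<\kappa}$ lies in some $X_\eta$.
\item Hence every member of $\maxk(\{f_x:x\in X\})$ is $\leq_\kappa^*$-below some $g_\eta$. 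If this family were dominating, $\{g_\eta:\eta<\bk\}$ would be a dominating family of size $\bk<\dk$, a contradiction.
\end{enumerate}
This short argument is a self-contained proof of the inequality $\bk<\dlk$ that the paper cites. With it inserted, your proof goes through for any non-$\kappa$-Hurewicz $X\subseteq P(\kappa)$ of size $\bk$.
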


\begin{proof}
Since $\non(\kappa\mbox{-Hurewicz}) = \bk$ (see Theorem~\ref{thm14}), there exists a subspace $X$ of $P(\kappa)$ such that $|X| = \bk$ and $X$ is not $\kappa$-Hurewicz. Now \cite[Proposition~1.9(ii) and Theorem~1.12]{mrs05} give that if $\bk< \dk$, then $\bk< \dlk$. It follows that $|X| < \dlk = \non(\S1k(\Gamma_\kappa, \Omega_\kappa))$ (see Theorem~\ref{lemma11}), and hence $X$ satisfies  $\S1k(\Gamma_\kappa, \Omega_\kappa)$.
\end{proof}

Following Khomskii et al. \cite[Definition~2.15]{klls16}, for a regular cardinal $\kappa$, $f, g\in \kappa^\kappa$ are said to be eventually different if there exists $\beta< \kappa$ such that $f(\alpha) \neq g(\alpha)$ for every $\alpha \geq \beta$; equivalently, the set $\{\alpha< \kappa : f(\alpha) = g(\alpha)\}$ has size less than $\kappa$. A family $E \subseteq \kappa^\kappa$ is said to be eventually different if for each $g\in \kappa^\kappa$, there exists $f\in E$ which is eventually different from $g$. The cardinal \[\ink = \min \{|E| :  E \subseteq \kappa^\kappa \text{ is an eventually different family}\}\] is called the inequality number.

We obtain the following continuous-image characterization of the $\kappa$-Rothberger property.

\begin{Lemma}
\label{lemma10}
Assume that $\kappa$ is regular. Let $X$ be a zero-dimensional $\kappa$-Lindel\"{o}f $P_\kappa$-space. Then $X$ is $\kappa$-Rothberger if and only if no continuous image of $X$ into $\kappa^\kappa$ is an eventually different family.
\end{Lemma}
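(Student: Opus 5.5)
We prove both directions separately, modelling the argument on the proof of Lemma~\ref{lemma7} and on the classical characterization of the Rothberger property via eventually different families.

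\emph{Forward direction.} Suppose $X$ is $\kappa$-Rothberger and let $Y=\Phi[X]$ for a continuous $\Phi:X\to\kappa^\kappa$. We must produce $g\in\kappa^\kappa$ such that every $y\in Y$ agrees with $g$ on $\kappa$ many coordinates.

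\begin{enumerate}[wide=0pt, label={\upshape(\roman*)}, leftmargin=*]
  \item Fix a partition $\{I_\beta:\beta<\kappa\}$ of $\kappa$ into sets of size $\kappa$, and enumerate the pairs $(\beta,\alpha)$ with $\alpha\in I_\beta$ as a single $\kappa$-sequence of coordinates.
  \item For each coordinate $\alpha$, consider the cover $\mathcal{U}_\alpha=\{\{y\in Y:y(\alpha)=\xi\}:\xi<\kappa\}$ of $Y$, pulled back to $X$ via $\Phi$. These sets are clopen, since the basic sets $\{y : y(\alpha)=\xi\}$ are clopen in $\kappa^\kappa$.
  \item If some $\mathcal{U}_\alpha$ has fewer than $\kappa$ nonempty members, the coordinate $\alpha$ takes fewer than $\kappa$ values on $Y$. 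Such coordinates are handled by padding the cover with dummy sets (for instance $\emptyset$, or repetitions) so that it has size $\kappa$. The padding is harmless because we only ever use that the selected sets cover. Alternatively, discard such coordinates and argue directly on the remaining ones.
  \item For each $\beta$, apply $\kappa$-Rothberger to the sequence $(\mathcal{U}_\alpha:\alpha\in I_\beta)$. This selects values $\xi_\alpha$, $\alpha\in I_\beta$, such that every $y\in Y$ satisfies $y(\alpha)=\xi_\alpha$ for some $\alpha\in I_\beta$.
  \item Set $g(\alpha)=\xi_\alpha$. Then each $y$ agrees with $g$ on at least one coordinate in every $I_\beta$, hence on $\kappa$ many coordinates. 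So no $y\in Y$ is eventually different from $g$, and $Y$ is not an eventually different family.
\end{enumerate}

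\emph{Converse direction.} Assume no continuous image of $X$ into $\kappa^\kappa$ is eventually different, and let $(\mathcal{U}_\alpha:\alpha<\kappa)\in\mathcal{O}_\kappa(X)^\kappa$.

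\begin{enumerate}[wide=0pt, label={\upshape(\roman*)}, leftmargin=*]
  \item As in Lemma~\ref{lemma7}, use zero-dimensionality and $\kappa$-Lindel\"ofness to refine each $\mathcal{U}_\alpha$ to a clopen cover $\{C^{(\alpha)}_\xi:\xi<\kappa\}$. Then disjointify it by setting $D^{(\alpha)}_\xi=C^{(\alpha)}_\xi\setminus\bigcup_{\eta<\xi}C^{(\alpha)}_\eta$. The $P_\kappa$ property makes $\bigcup_{\eta<\xi}C^{(\alpha)}_\eta$ closed, as it is the complement of an intersection of fewer than $\kappa$ open sets; this keeps each $D^{(\alpha)}_\xi$ clopen.
  \item Choose $U^{(\alpha)}_\xi\in\mathcal{U}_\alpha$ with $C^{(\alpha)}_\xi\subseteq U^{(\alpha)}_\xi$.
  \item Define $\Phi(x)(\alpha)$ to be the unique $\xi$ with $x\in D^{(\alpha)}_\xi$. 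Continuity of $\Phi$ follows from clopenness and the $P_\kappa$ property: the preimage of a basic set fixing fewer than $\kappa$ coordinates is an intersection of fewer than $\kappa$ clopen sets.
  \item By hypothesis there is $g$ such that every $x$ satisfies $\Phi(x)(\alpha)=g(\alpha)$ for $\kappa$ many $\alpha$; we only need one such $\alpha$. Selecting $U^{(\alpha)}_{g(\alpha)}$ from each $\mathcal{U}_\alpha$ then covers $X$.
\end{enumerate}

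\emph{Main obstacle.} The delicate point is the cardinality requirement that the selected family have size exactly $\kappa$, i.e.\ lie in $\mathcal{O}_\kappa(X)$, since repetitions may occur. I would fix this as in Theorem~\ref{thm3}. Split $\kappa$ into two disjoint sets of size $\kappa$, say $J$ and $\kappa\setminus J$. Use only the coordinates in $J$ for the covering argument, which still works because the agreement set has size $\kappa$ (apply the hypothesis to the image restricted to $J$, re-indexed as $\kappa$). Then choose pairwise distinct fresh members on the coordinates outside $J$, which is possible since each $\mathcal{U}_\alpha$ has size $\kappa$.

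A similar care is needed in the forward direction: the covers $\mathcal{U}_\alpha$ must be genuine members of $\mathcal{O}_\kappa$. I would also handle this by padding or by trivial coordinates, and I expect this bookkeeping to be the main nuisance rather than a conceptual difficulty.
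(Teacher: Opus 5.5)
Your converse direction is sound and is essentially the paper's proof. It uses the same steps: clopen refinement, disjointification via the $P_\kappa$ property, the coding map into $\kappa^\kappa$, and splitting $\kappa$ into two halves of size $\kappa$, with the unused half supplying pairwise distinct fresh members.

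The gap is in the forward direction, at step (iii). The principle $S_1^\kappa(\mathcal{O}_\kappa,\mathcal{O}_\kappa)$ applies only to covers with exactly $\kappa$ distinct members, and your one-coordinate covers $\{\{y\in Y: y(\alpha)=\xi\}:\xi<\kappa\}$ need not be that large. Neither repair you offer works:
\begin{itemize}
\item Adding $\emptyset$ contributes one member and repetitions contribute none, since covers are sets of sets (your own converse argument assumes this).
\item Adding any other nonempty open sets breaks steps (iv)--(v). If a dummy set is selected, the points it covers say nothing about agreement with $g$, so it is false that you "only use that the selected sets cover".
\item Discarding the bad coordinates fails because there may be no good ones. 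If $Y\subseteq 2^\kappa$ (e.g.\ $X$ a $\kappa$-Rothberger subspace of $2^\kappa$ and $\Phi$ the inclusion), each of your covers has at most three members, so the argument never starts.
\end{itemize}
Assuming, towards a contradiction, that $Y$ is eventually different does not produce many-valued coordinates either. The space $2^\kappa$ itself is eventually different: for any $g$, the point $y$ with $y(\alpha)=1$ iff $g(\alpha)=0$ disagrees with $g$ everywhere.

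The missing idea is to merge $\kappa$ many coordinates into one cover and to use eventual difference to get the size right. This is not mere bookkeeping; it is where the hypothesis on $Y$ is actually used.

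For $I\in[\kappa]^\kappa$ let $\mathcal{C}_I=\{\{f\in Y: f(\beta)=\xi\}:\beta\in I,\ \xi<\kappa\}$, which has at most $\kappa$ members. Suppose it had fewer than $\kappa$ members. For $f\in Y$, the map $\beta\mapsto\{f'\in Y: f'(\beta)=f(\beta)\}$ sends $I$ into $\mathcal{C}_I$. By regularity, $f$ then lies in some $C\in\mathcal{C}_I$ for which $D_C=\{\beta\in I: C=\{f'\in Y:f'(\beta)=\xi\}\text{ for some }\xi\}$ has size $\kappa$. There are fewer than $\kappa$ such $C$, so you can choose pairwise disjoint $J_C\in[D_C]^\kappa$. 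Let $g$ take on $J_C$ the value representing $C$. This $g$ agrees with every $f\in Y$ on $\kappa$ many coordinates. Hence, if $Y$ is eventually different, every $\mathcal{C}_I$ is a genuine member of $\mathcal{O}_\kappa(Y)$.

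Now partition $\kappa$ into sets $I_\alpha$ ($\alpha<\kappa$) of size $\kappa$, and group the indices $\alpha$ into $\kappa$ disjoint blocks $A_\gamma$ of size $\kappa$. For each $\gamma$, apply the $\kappa$-Rothberger property (on $Y$, or pulled back to $X$ as you do) to $(\mathcal{C}_{I_\alpha}:\alpha\in A_\gamma)$. Fix a representation $(\beta_\alpha,\xi_\alpha)$ with $\beta_\alpha\in I_\alpha$ for each selected set, and put $h(\beta_\alpha)=\xi_\alpha$. The $\beta_\alpha$ are distinct because the $I_\alpha$ are disjoint. Every $f\in Y$ agrees with $h$ at one coordinate per block, hence on $\kappa$ many coordinates, contradicting eventual difference. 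This is exactly the paper's argument.
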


\begin{proof}
Suppose that $X$ is $\kappa$-Rothberger. Let $\Phi : X\to \kappa^\kappa$ be continuous and put $Y = \Phi[X]$. We show that $Y$ is not an eventually different family. Suppose, on the contrary, that $Y$ is eventually different. For $I\in [\kappa]^\kappa$, $\beta\in I$, and $\xi< \kappa$, put $C_{\beta,\xi} = \{f\in Y : f(\beta) = \xi\}$, and let $\mathcal C_I = \{C_{\beta,\xi} : \beta\in I, \xi< \kappa\}$. Each $C_{\beta,\xi}$ is clopen in $Y$, and $\mathcal{C}_I$ is an open cover of $Y$.

We claim that $|\mathcal{C}_I| = \kappa$ for every $I\in [\kappa]^\kappa$. Suppose that $|\mathcal{C}_I| < \kappa$ for some $I\in [\kappa]^\kappa$. For each $C\in \mathcal{C}_I$, put $D_C = \{\beta\in I : (\exists \xi< \kappa)\ C_{\beta,\xi} = C\}$, and let $\mathcal{D} = \{C\in \mathcal{C}_I : |D_C| = \kappa\}$. Then $\mathcal{D}$ covers $Y$. Since $|\mathcal{D}| < \kappa$ and $|D_C| = \kappa$ for every $C\in \mathcal{D}$, a recursion of length $\kappa$ yields pairwise disjoint sets $J_C\in [D_C]^\kappa$, $C\in \mathcal{D}$. For each $C\in \mathcal{D}$ and each $\beta\in J_C$, choose $\xi_\beta < \kappa$ such that $C_{\beta, \xi_\beta} = C$. Since the sets $J_C$ are pairwise disjoint, we may define $g\in \kappa^\kappa$ by $g(\beta) = \xi_\beta$ for all $\beta\in \bigcup_{C\in \mathcal{D}} J_C$, and define $g$ arbitrarily elsewhere. Let $f\in Y$. Choose $C\in \mathcal{D}$ such that $f\in C$. For each $\beta\in J_C$, $C = C_{\beta, \xi_\beta}$, and hence $f(\beta) = \xi_\beta = g(\beta)$. Therefore, $|\{\beta< \kappa : f(\beta) = g(\beta)\}| \geq |J_C| = \kappa$, which contradicts $Y$ being eventually different. Hence $|\mathcal{C}_I| = \kappa$ for every $I\in [\kappa]^\kappa$. Thus $\mathcal{C}_I \in \mathcal{O}_\kappa(Y)$ for every $I\in [\kappa]^\kappa$.

Choose a partition $\kappa = \bigcup_{\alpha< \kappa} I_\alpha$ such that $I_\alpha$'s are pairwise disjoint and $|I_\alpha| = \kappa$ for every $\alpha< \kappa$. Choose also a partition $\kappa = \bigcup_{\gamma< \kappa} A_\gamma$ such that $A_\gamma$'s are pairwise disjoint and $|A_\gamma| = \kappa$ for every $\gamma< \kappa$. For each $\gamma< \kappa$, after reindexing the sequence $(\mathcal{C}_{I_\alpha} : \alpha\in A_\gamma)$ by $\kappa$, apply
$\S1k(\mathcal{O}_\kappa, \mathcal{O}_\kappa)$. Thus for each $\alpha\in A_\gamma$, choose $C_{\beta_\alpha,\xi_\alpha} \in \mathcal{C}_{I_\alpha}$ so that $Y = \bigcup_{\alpha\in A_\gamma} C_{\beta_\alpha, \xi_\alpha}$. Since $\beta_\alpha\in I_\alpha$ and the sets $I_\alpha$ are pairwise disjoint, the ordinals $\beta_\alpha$, $\alpha< \kappa$, are pairwise distinct. Define
$h\in \kappa^\kappa$ by $h(\beta_\alpha) = \xi_\alpha$ for all $\alpha< \kappa$,
and extend $h$ arbitrarily to the remaining coordinates. Let $f\in Y$. For every $\gamma< \kappa$, there exists $\alpha\in A_\gamma$ such that $f\in C_{\beta_\alpha, \xi_\alpha}$. Thus $f(\beta_\alpha) = \xi_\alpha = h(\beta_\alpha)$. Since the sets $A_\gamma$, $\gamma< \kappa$, are pairwise disjoint, these give $\kappa$ many distinct coordinates at which $f$ and $h$ agree. Hence the size of the set $\{\beta< \kappa : f(\beta) = h(\beta)\}$ is $\kappa$. This holds for every $f\in Y$, contradicting the assumption that $Y$ is eventually different. Therefore, $Y$ is not an eventually different family.

Conversely, suppose that no continuous image of $X$ into $\kappa^\kappa$ is an eventually different family. Let $(\mathcal{U}_\alpha : \alpha< \kappa) \in \mathcal{O}_\kappa(X)^\kappa$. Choose a partition $\kappa = A\cup B$ such that $|A| = |B| = \kappa$, and write $A = \{a_\delta : \delta< \kappa\}$. For each $\alpha\in A$, since $X$ is zero-dimensional, $\mathcal{U}_\alpha$ has a clopen refinement. Since $X$ is $\kappa$-Lindel\"{o}f, we may choose a clopen refinement of cardinality at most $\kappa$. Repeating members if necessary, write it as $\{C_\xi^{(\alpha)} : \xi< \kappa\}$. For each $\xi< \kappa$, choose $U_\xi^{(\alpha)} \in \mathcal{U}_\alpha$ such that $C_\xi^{(\alpha)} \subseteq U_\xi^{(\alpha)}$, and let $P_\xi^{(\alpha)} = C_\xi^{(\alpha)} \setminus \bigcup_{\eta< \xi} C_\xi^{(\alpha)}$.

Define $\Phi : X\to \kappa^\kappa$ by $\Phi(x)(\delta) = \xi$ if and only if $x\in P_\xi^{(a_\delta)}$. Then $\Phi$ is continuous. Indeed, for every $s\in \kappa^{<\kappa}$, $\Phi^{-1}([s]) = \bigcap_{\delta\in \dom(s)} P_{s(\delta)}^{(a_\delta)}$, which is open since $|\dom(s)|< \kappa$ and $X$ is a $P_\kappa$-space. Put $Y = \Phi[X]$. By assumption, there exists $g\in \kappa^\kappa$ such that for every $x\in X$, $\{\delta< \kappa : \Phi(x)(\delta) = g(\delta)\}$ has size $\kappa$. For each $\delta< \kappa$, choose $U_{a_\delta} = U_{g(\delta)}^{(a_\delta)} \in \mathcal{U}_{a_\delta}$. Let $x\in X$. Choose $\delta< \kappa$ such that $\Phi(x)(\delta) = g(\delta)$. Then $x\in P_{g(\delta)}^{(a_\delta)} \subseteq U_{g(\delta)}^{(a_\delta)} = U_{a_\delta}$. Therefore, $X = \bigcup_{\alpha\in A} U_\alpha$.

It remains to ensure that the family of all selected members has size exactly $\kappa$, as required by the definition of $\mathcal{O}_\kappa$. Write $B = \{b_\xi : \xi< \kappa\}$. Recursively choose $U_{b_\xi} \in \mathcal{U}_{b_\xi}$ so that $U_{b_\xi} \notin \{U_{b_\eta} : \eta< \xi\}$. This is possible since $|\mathcal{U}_{b_\xi}| = \kappa$ and fewer than $\kappa$ members have been chosen at stage $\xi$. Consequently, $\{U_\alpha : \alpha< \kappa\}$ has size $\kappa$. This yields $\{U_\alpha : \alpha< \kappa\}\in \mathcal{O}_\kappa(X)$. Therefore, $X$ satisfies $\S1k(\mathcal{O}_\kappa, \mathcal{O}_\kappa)$.
\end{proof}

Arguing as in the proof of Theorem~\ref{thm2} and using the preceding lemma, we obtain the following result.

\begin{Th}
\label{thm9}
Assume $\kappa^{<\kappa} = \kappa$. Then $\non(\kappa\text{-Rothberger}) = \ink$.
\end{Th}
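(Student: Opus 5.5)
We prove the two inequalities separately, following the proof of Theorem~\ref{thm2} with Lemma~\ref{lemma10} playing the role of Lemma~\ref{lemma7}.

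\emph{Lower bound.} Let $X\subseteq P(\kappa)$ with $|X|<\ink$. Since $\kappa^{<\kappa}=\kappa$, the space $P(\kappa)$ has a base of size $\kappa$. Hence every subspace $X$ is $\kappa$-Lindel\"{o}f: from any open cover, each basic set contained in some member of the cover contributes one chosen member, giving at most $\kappa$ members. As a subspace of the $\kappa$-Cantor space, $X$ is also zero-dimensional and a $P_\kappa$-space, so Lemma~\ref{lemma10} applies to $X$. Any continuous image $Y$ of $X$ in $\kappa^\kappa$ satisfies $|Y|\le|X|<\ink$, so by the definition of $\ink$ it is not an eventually different family. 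Lemma~\ref{lemma10} then gives that $X$ is $\kappa$-Rothberger, so $\ink\le\non(\kappa\text{-Rothberger})$.

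\emph{Upper bound.} Take an eventually different family $E\subseteq\kappa^\kappa$ with $|E|=\ink$. We need a subspace of $P(\kappa)$ of the same size that is not $\kappa$-Rothberger. The first step is to move $E$ into $P(\kappa)$ by a homeomorphic embedding of $\kappa^\kappa$ into $2^\kappa$. Under $\kappa^{<\kappa}=\kappa$ one can use the map $f\mapsto\{\ulcorner f\restriction\alpha\urcorner:\alpha<\kappa\}$, where $\ulcorner\cdot\urcorner$ is a bijection $\kappa^{<\kappa}\to\kappa$. This map is a homeomorphism onto its image, because basic open sets in both spaces are determined by fewer than $\kappa$ coordinates. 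Let $Z\subseteq P(\kappa)$ be the image of $E$; then $Z$ is homeomorphic to $E$. Now $Z$ is zero-dimensional, $\kappa$-Lindel\"{o}f and $P_\kappa$, and its continuous image in $\kappa^\kappa$ given by the inverse homeomorphism is $E$ itself, which is eventually different. By Lemma~\ref{lemma10}, $Z$ is not $\kappa$-Rothberger. Hence $\non(\kappa\text{-Rothberger})\le|Z|=\ink$.

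\emph{Expected obstacle.} The main thing to check is the embedding $\kappa^\kappa\hookrightarrow P(\kappa)$ and that it is a homeomorphism, since the critical cardinality is defined via subsets of $P(\kappa)$; Theorem~\ref{thm2} implicitly used the same identification. Alternatively, one could observe that Lemma~\ref{lemma10} only needs the hypotheses on $X$ and apply it directly to $E$ regarded as a subspace of $\kappa^\kappa$. The paper's convention, however, requires the transfer to $P(\kappa)$, which the coding above provides. A secondary point is that Remark~\ref{rem9} does not interfere: $\ink\ge\kappa^+$, since any family of size $\le\kappa$ can be diagonalized against by agreeing with each member on $\kappa$ coordinates. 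So $Z$ has size at least $\kappa$ and the failure of $\kappa$-Rothberger is nontrivial. In any case, Lemma~\ref{lemma10} itself already certifies the failure.
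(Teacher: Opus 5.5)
Your proposal is correct and matches the paper's proof, which simply repeats the argument of Theorem~\ref{thm2} with Lemma~\ref{lemma10} in place of Lemma~\ref{lemma7}. For the lower bound, a subset of $P(\kappa)$ of size less than $\ink$ has only continuous images of size less than $\ink$, so none is eventually different. For the upper bound, an eventually different family of size $\ink$ is itself a space that fails to be $\kappa$-Rothberger; your explicit coding $f\mapsto\{\ulcorner f\restriction\alpha\urcorner:\alpha<\kappa\}$ of $\kappa^\kappa$ into $P(\kappa)$ just makes precise the identification the paper leaves implicit there.
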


Let $\kappa$ be a regular cardinal. Recall that a subset of $\kappa^\kappa$ is $\kappa$-meager if it is the union of at most $\kappa$ many nowhere dense subsets of $\kappa^\kappa$. Let $\mathcal{M}_\kappa$ denote the family of all $\kappa$-meager subsets of $\kappa^\kappa$. The covering number of $\mathcal{M}_\kappa$ is defined by \[\cov(\mathcal{M}_\kappa) = \min \left\{ |\mathcal{A}| : \mathcal{A} \subseteq \mathcal{M}_\kappa \text{ and } \bigcup \mathcal{A} = \kappa^\kappa\right\}.\] Thus $\cov(\mathcal{M}_\kappa)$ is the least cardinality of a family of $\kappa$-meager subsets of $\kappa^\kappa$ whose union is $\kappa^\kappa$.

Landver \cite{lander92} proved that $\ink = \cov(\mathcal{M}_\kappa)$ whenever $\kappa$ is strongly inaccessible. Consequently, we have the following.

\begin{Cor}
\label{cor6}
Assume that $\kappa$ is strongly inaccessible. Then $\non(\kappa\mbox{-Rothberger}) = \ink = \cov(\mathcal{M}_\kappa)$.
\end{Cor}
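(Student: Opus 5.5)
The statement is a direct combination of two facts, so the plan is simply to chain them. Since $\kappa$ is strongly inaccessible, it is regular and $2^\alpha<\kappa$ for every $\alpha<\kappa$. I would first check that this gives $\kappa^{<\kappa}=\kappa$. For $\alpha<\kappa$, regularity shows that every function $\alpha\to\kappa$ is bounded by some $\beta<\kappa$. Hence
\[\kappa^{\alpha}=\bigl|\textstyle\bigcup_{\beta<\kappa}\beta^\alpha\bigr| \le \kappa\cdot\sup_{\beta<\kappa}2^{|\beta|\cdot|\alpha|}=\kappa,\]
and therefore $\kappa^{<\kappa}=\kappa$. This is the hypothesis required by Theorem~\ref{thm9}.

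Next, apply Theorem~\ref{thm9}, which gives $\non(\kappa\text{-Rothberger})=\ink$. Theorem~\ref{thm9} itself rests on Lemma~\ref{lemma10}. That lemma applies to every subspace $X\subseteq P(\kappa)$, because such an $X$ is a zero-dimensional $P_\kappa$-space, and it is $\kappa$-Lindel\"of since $P(\kappa)$ has a base of size $\kappa^{<\kappa}=\kappa$. The argument then parallels that of Theorem~\ref{thm2}. If $|X|<\ink$, no continuous image of $X$ in $\kappa^\kappa$ can be eventually different, so $X$ is $\kappa$-Rothberger. Conversely, an eventually different family of size $\ink$ is its own continuous image and hence fails to be $\kappa$-Rothberger. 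That family sits inside $\kappa^\kappa$, which embeds homeomorphically into $2^\kappa\cong P(\kappa)$ when $\kappa^{<\kappa}=\kappa$, so it counts toward the critical cardinality.

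Finally, cite Landver's theorem \cite{lander92}: for strongly inaccessible $\kappa$, $\ink=\cov(\mathcal{M}_\kappa)$. Combining this with the previous step yields the corollary. No step is a real obstacle. The only point needing care is the transfer from subsets of $\kappa^\kappa$ to subsets of $P(\kappa)$ in the definition of $\non$, which I would handle with the standard embedding of $\kappa^\kappa$ into $2^\kappa$ via characteristic functions of codes of initial segments, using $\kappa^{<\kappa}=\kappa$.
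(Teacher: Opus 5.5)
Your proposal is correct and follows the paper's route: strong inaccessibility gives $\kappa^{<\kappa}=\kappa$, so Theorem~\ref{thm9} yields $\non(\kappa\text{-Rothberger})=\ink$, and Landver's theorem gives $\ink=\cov(\mathcal{M}_\kappa)$. Your extra step, transferring an eventually different family from $\kappa^\kappa$ into $P(\kappa)$ by a homeomorphic embedding, fills in a detail that the paper leaves implicit.
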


\begin{Cor}
\label{cor5}
Assume that $\kappa$ is a successor cardinal satisfying $\kappa^{<\kappa} = \kappa$. If $\dlk< \dk$, then there exists a $\kappa$-Rothberger space which is not $\kappa$-Scheepers.
\end{Cor}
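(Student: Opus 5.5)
The approach is a cardinality argument in the style of Corollary~\ref{cor4}. We take a subspace $X\subseteq P(\kappa)$ with $|X| = \dlk$ which is not $\kappa$-Scheepers and show that it is $\kappa$-Rothberger because $\dlk< \non(\kappa\mbox{-Rothberger})$. Since $\kappa^{<\kappa}=\kappa$, Theorem~\ref{thm2} gives $\non(\kappa\mbox{-Scheepers}) = \dlk$, so such an $X$ exists. By Theorem~\ref{thm9}, $\non(\kappa\mbox{-Rothberger}) = \ink$. It therefore suffices to prove $\dlk< \ink$. Then every subspace of $P(\kappa)$ of size $\dlk$ is $\kappa$-Rothberger, and in particular the chosen $X$ is $\kappa$-Rothberger but not $\kappa$-Scheepers.

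The key cardinal inequality is obtained in two steps.

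\textbf{Step 1: $\dk \leq \ink$ for successor $\kappa$.} This is the step that uses the successor hypothesis, and it is the main obstacle. I would rely on known results on the generalized inequality number: for successor $\kappa$ with $\kappa^{<\kappa}=\kappa$, one has $\cov(\mathcal{M}_\kappa)$-type invariants large, and in particular $\ink \geq \dk$. This appears in Khomskii et al.\ \cite{klls16} and in the literature on generalized Cicho\'n diagrams, where for successor $\kappa$ the eventually different number dominates $\dk$.

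If I need to argue it directly, I would take $E\subseteq \kappa^\kappa$ with $|E|< \dk$ and produce $g$ agreeing with every $f\in E$ on $\kappa$ many coordinates. Write $\kappa = \mu^+$ and code blocks: partition $\kappa$ into intervals determined by a function $h$ not dominated by suitable functions derived from $E$. On each interval of size $\mu$, the fact that $\kappa = \mu^+$ lets one enumerate fewer than $\kappa$ relevant values and cover them. Concretely, for each $f\in E$ and each coordinate $\alpha$, there are at most $\mu$ many earlier values to diagonalize against.

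Fallback: cite the known ZFC result that for successor $\kappa=\kappa^{<\kappa}$, $\mathfrak{d}_\kappa\le \cov(\mathcal M_\kappa)$-like invariant $\ink$. I expect this citation or coding argument to be the delicate part.

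\textbf{Step 2: combine the inequalities.} From Step~1 and the hypothesis $\dlk<\dk$, we get $\dlk< \dk\leq \ink$.

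Having $\dlk< \ink = \non(\kappa\mbox{-Rothberger})$, the conclusion follows immediately as described above.
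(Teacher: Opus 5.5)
Your argument is the paper's proof: the paper cites \cite{klls16} for $\ink=\dk$ at successor $\kappa$ with $\kappa^{<\kappa}=\kappa$ (you use only the half $\dk\le\ink$, which suffices), deduces $\dlk<\ink$, and then takes a non-$\kappa$-Scheepers $X\subseteq P(\kappa)$ of size $\dlk$ from Theorem~\ref{thm2}, which is $\kappa$-Rothberger by Theorem~\ref{thm9}. Your direct coding sketch for Step~1 is too vague to count as a proof on its own, but since you fall back on the same citation the paper uses, nothing is missing.
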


\begin{proof}
Since $\kappa$ is a successor cardinal satisfying $\kappa^{<\kappa} = \kappa$, $\ink = \dk$ (see \cite{klls16}). Hence $\dlk< \ink$. Since $\non(\kappa\mbox{-Scheepers}) = \dlk$ (see Theorem~\ref{thm2}), there exists $X\subseteq P(\kappa)$ such that $|X| = \dlk$ and $X$ is not $\kappa$-Scheepers. By Theorem~\ref{thm9}, $X$ is $\kappa$-Rothberger.
\end{proof}

\begin{Rem}
\label{rem8}
By \cite[Theorem~0.1]{mrs05}, assuming GCH (i.e. $2^\lambda = \lambda^+$ for every infinite cardinal $\lambda$), there is a forcing extension in which $\dlk< \dk$. Therefore, it is consistent with ZFC that there exists a $\kappa$-Rothberger space which is not $\kappa$-Scheepers.
\end{Rem}

\begin{Th}
\label{lemma11}
Assume $\kappa^{<\kappa} = \kappa$. Then $\non(\S1k(\Gamma_\kappa, \Omega_\kappa)) = \dlk$.
\end{Th}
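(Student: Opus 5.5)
Both inequalities will go through Lemma~\ref{lemma8} and the comparison with $\kappa$-Scheepers. The main tool is a continuous-image characterization of $\S1k(\Gamma_\kappa,\Omega_\kappa)$ for zero-dimensional $\kappa$-Lindel\"{o}f $P_\kappa$-spaces, modelled on Lemma~\ref{lemma7}. The target is: $X$ satisfies $\S1k(\Gamma_\kappa,\Omega_\kappa)$ if and only if for every continuous image $Y\subseteq\kappa^\kappa$ of $X$, the family $\maxk(Y)$ is not dominating. Since $P(\kappa)$ has a base of size $\kappa$ (as $\kappa^{<\kappa}=\kappa$), every $X\subseteq P(\kappa)$ qualifies. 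Granting this, the proof of Theorem~\ref{thm2} can be repeated verbatim.

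\emph{Upper bound.} Rather than prove both directions of the characterization, I would obtain $\non(\S1k(\Gamma_\kappa,\Omega_\kappa))\le\dlk$ from Theorem~\ref{thm2}. By Remark~\ref{rem7}(1), $\S1k(\Gamma_\kappa,\Omega_\kappa)\to\Sk(\Gamma_\kappa,\Omega_\kappa)$. By the proposition preceding Remark~\ref{rem7}, $\Sk(\Gamma_\kappa,\Omega_\kappa)\to\Uk(\mathcal{O}_\kappa,\Omega_\kappa)$. Hence a subset of $P(\kappa)$ of size $\dlk$ that is not $\kappa$-Scheepers also fails $\S1k(\Gamma_\kappa,\Omega_\kappa)$.

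\emph{Lower bound.} Let $X\subseteq P(\kappa)$ with $|X|<\dlk$, and let $(\mathcal{U}_\alpha:\alpha<\kappa)\in\Gamma_\kappa(X)^\kappa$. Enumerate $\mathcal{U}_\alpha=\{U^{(\alpha)}_\xi:\xi<\kappa\}$ injectively. Define $\Phi(x)(\alpha)=\min\{\xi: x\in U^{(\alpha)}_\eta \text{ for all } \eta\ge\xi\}$; this is well defined because the $\gamma_\kappa$ property together with regularity of $\kappa$ bounds the set $\{\eta:x\notin U^{(\alpha)}_\eta\}$ below $\kappa$.

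Continuity of $\Phi$ is not needed here, since only the cardinality $|\Phi[X]|\le|X|<\dlk$ is used. By Lemma~\ref{lemma8}, $\maxk(\Phi[X])$ is not dominating, so there is $g$ with $g\not\le_\kappa^* h$ for every $h\in\maxk(\Phi[X])$.

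Select $U_\alpha=U^{(\alpha)}_{g'(\alpha)}$, where $g'$ is $g$ modified to be strictly increasing (replacing $g$ by a larger function keeps the non-domination property). Then the chosen sets are pairwise distinct, so the selected family has size $\kappa$. Given $F\in[X]^{<\kappa}$, let $h=\sup_{x\in F}\Phi(x)$, which lies in $\maxk(\Phi[X])$. There are $\kappa$ many $\alpha$ with $g'(\alpha)>h(\alpha)\ge\Phi(x)(\alpha)$ for all $x\in F$, and for each such $\alpha$ we get $F\subseteq U_\alpha$. Finally, if some $U_\alpha=X$, remove $X$ from the covers at the start; this is harmless because a $\gamma_\kappa$-cover minus one element remains a $\gamma_\kappa$-cover. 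So the selection is a $\kappa$-cover of size $\kappa$, i.e.\ it lies in $\Omega_\kappa(X)$.

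The main obstacle I expect is this bookkeeping: making $g'$ both increasing and still non-dominated by $\maxk$. Pointwise enlarging $g$ preserves ``$g\not\le^*_\kappa h$'', and an increasing majorant exists by regularity of $\kappa$, so this step should go through.
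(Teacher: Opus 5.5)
Your overall route is the paper's. The upper bound comes from Theorem~\ref{thm2} through $\S1k(\Gamma_\kappa,\Omega_\kappa)\to\Sk(\Gamma_\kappa,\Omega_\kappa)\to\Uk(\mathcal{O}_\kappa,\Omega_\kappa)$. For the lower bound, your $\Phi(x)(\alpha)$ coincides with the paper's $f_x(\alpha)=\sup\{\xi+1:x\notin U^{(\alpha)}_\xi\}$, and you make the same appeal to Lemma~\ref{lemma8} and the same check that every $F\in[X]^{<\kappa}$ lies in $\kappa$ many selected sets. The continuous-image characterization you announce at the start is never actually used.

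There is, however, one false step: making $g'$ strictly increasing does not make the sets $U^{(\alpha)}_{g'(\alpha)}$ pairwise distinct. The enumerations are injective only within each cover. So for $\alpha\neq\beta$ the fact $g'(\alpha)\neq g'(\beta)$ says nothing, since the same open set can occupy position $g'(\alpha)$ in the enumeration of $\mathcal{U}_\alpha$ and position $g'(\beta)$ in that of $\mathcal{U}_\beta$ (e.g.\ when all the covers are one family listed in different orders).

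The conclusion that the selection has size $\kappa$ is still true, for a different reason. The selected family is a $\kappa$-cover of $X$ not containing $X$, and such a family cannot have fewer than $\kappa$ members: otherwise pick $x_V\in X\setminus V$ for each member $V$, and the resulting set of fewer than $\kappa$ points lies in no member. Alternatively, follow the paper and choose $\xi_\alpha\ge g(\alpha)$ recursively so that $U^{(\alpha)}_{\xi_\alpha}\neq X$ and it differs from the fewer than $\kappa$ sets already chosen. This is possible because $\{U^{(\alpha)}_\xi:\xi\ge g(\alpha)\}$ has $\kappa$ members. With either repair, the increasing majorant $g'$ is unnecessary.
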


\begin{proof}
By Theorem~\ref{thm2}, $\non(\S1k(\Gamma_\kappa, \Omega_\kappa)) \leq \dlk$. We now show that $\non(\S1k(\Gamma_\kappa, \Omega_\kappa)) \geq \dlk$. Let $X$ be a subspace of $P(\kappa)$ with $|X|< \dlk$, and let $(\mathcal{U}_\alpha : \alpha< \kappa) \in \Gamma_\kappa(X)^\kappa$. For each $\alpha< \kappa$, write $\mathcal{U}_\alpha = \{U_\xi^{(\alpha)} : \xi< \kappa\}$ without repetitions. For each $x\in X$, define $f_x\in \kappa^\kappa$ by \[f_x(\alpha) = \sup \{\xi+1 : x\notin U_\xi^{(\alpha)}\},\] where $\sup \emptyset = 0$. Put $Y = \{f_x : x\in X\}$. Since $|Y|< \dlk$, the family $\maxk(Y)$ is not dominating. Hence there exists $g\in \kappa^\kappa$ such that $g\not \leq_\kappa^* h$ for every $h\in \maxk(Y)$.

Recursively choose $\xi_\alpha< \kappa$ for $\alpha< \kappa$ such that $\xi_\alpha \geq g(\alpha)$ and $U_{\xi_\alpha}^{(\alpha)} \neq X$, and the sets $U_{\xi_\alpha}^{(\alpha)}$, $\alpha< \kappa$, are pairwise distinct. Then $\{U_{\xi_\alpha}^{(\alpha)} : \alpha< \kappa\} \in \Omega_\kappa(X)$. Therefore, $X$ satisfies $\S1k(\Gamma_\kappa, \Omega_\kappa)$. It follows that $\non(\S1k(\Gamma_\kappa, \Omega_\kappa)) \geq \dlk$. Thus $\S1k(\Gamma_\kappa, \Omega_\kappa) = \dlk$.
\end{proof}

\begin{Th}
\label{lemma12}
Assume $\kappa^{<\kappa} = \kappa$. Then $\non(\S1k(\Gamma_\kappa, \mathcal{O}_\kappa)) = \dk$.
\end{Th}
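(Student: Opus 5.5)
Both inequalities will be proved separately, modelled on the proof of Theorem~\ref{lemma11}, with $\dlk$ replaced by $\dk$.

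\emph{Upper bound.} By Remark~\ref{rem7}(2), $\S1k(\Gamma_\kappa, \mathcal{O}_\kappa)$ implies $\Sk(\mathcal{O}_\kappa, \mathcal{O}_\kappa)$, which is $\kappa$-Menger. Hence any subspace of $P(\kappa)$ that is not $\kappa$-Menger also fails $\S1k(\Gamma_\kappa, \mathcal{O}_\kappa)$. By Theorem~\ref{thm15}, such a subspace exists with $|X| = \dk$. This gives $\non(\S1k(\Gamma_\kappa, \mathcal{O}_\kappa)) \leq \dk$.

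\emph{Lower bound.} Let $X \subseteq P(\kappa)$ with $|X| < \dk$, and let $(\mathcal{U}_\alpha : \alpha< \kappa) \in \Gamma_\kappa(X)^\kappa$. Enumerate $\mathcal{U}_\alpha = \{U_\xi^{(\alpha)} : \xi< \kappa\}$ without repetitions. For $x\in X$ put
\[f_x(\alpha) = \sup\{\xi+1 : x\notin U_\xi^{(\alpha)}\}.\]
Since $\mathcal{U}_\alpha$ is a $\gamma_\kappa$-cover and $\kappa$ is regular, $f_x(\alpha)<\kappa$, so $f_x \in \kappa^\kappa$. The family $\{f_x : x\in X\}$ has size less than $\dk$, so it is not dominating. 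Fix $g\in \kappa^\kappa$ with $g \not\leq_\kappa^* f_x$ for every $x\in X$. Then for each $x$ the set $\{\alpha : f_x(\alpha) < g(\alpha)\}$ has size $\kappa$, and in particular is nonempty.

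Recursively choose $\xi_\alpha \geq g(\alpha)$ such that the sets $U_{\xi_\alpha}^{(\alpha)}$ are pairwise distinct and different from $X$. This is possible because each $\mathcal{U}_\alpha$ has $\kappa$ members, only one of them can equal $X$, fewer than $\kappa$ sets have been chosen before stage $\alpha$, and $\{\xi : \xi \geq g(\alpha)\}$ has size $\kappa$. Now let $x \in X$ and pick $\alpha$ with $f_x(\alpha) < g(\alpha) \leq \xi_\alpha$. Then $\xi_\alpha+1 > f_x(\alpha)$, so $x\in U_{\xi_\alpha}^{(\alpha)}$. Thus $\{U_{\xi_\alpha}^{(\alpha)} : \alpha<\kappa\}$ is an open cover of $X$ of size $\kappa$, that is, a member of $\mathcal{O}_\kappa(X)$. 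Hence $X$ satisfies $\S1k(\Gamma_\kappa, \mathcal{O}_\kappa)$, and $\non(\S1k(\Gamma_\kappa, \mathcal{O}_\kappa)) \geq \dk$.

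The argument is routine. The only points needing care are that $f_x$ really takes values below $\kappa$, which uses regularity and the $\gamma_\kappa$ property, and the bookkeeping that keeps the selected family at exactly size $\kappa$ and free of $X$. Neither is a serious obstacle.
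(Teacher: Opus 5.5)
Your proof is correct and is essentially the argument the paper intends when it says the proof is ``analogous to that of Theorem~\ref{lemma11}'': the upper bound comes from $S_1^\kappa(\Gamma_\kappa,\mathcal{O}_\kappa)\rightarrow\Sk(\mathcal{O}_\kappa,\mathcal{O}_\kappa)$ together with $\non(\kappa\text{-Menger})=\dk$ (Theorem~\ref{thm15}), and the lower bound is the proof of Theorem~\ref{lemma11} with the non-dominating family $\maxk(Y)$ replaced by the non-dominating family $\{f_x : x\in X\}$ itself. The regularity of $\kappa$ that you invoke is automatic, since $\kappa^{<\kappa}=\kappa$ forces $\kappa$ to be regular by K\"onig's theorem.
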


\begin{proof}
The proof is analogous to that of Theorem~\ref{lemma11} and is therefore omitted.
\end{proof}

\begin{Th}
\label{thm10}
Assume that $\kappa$ is weakly compact. Then the $\kappa$-Cantor space $2^\kappa$ satisfies $\Sk(\Omega_\kappa, \Omega_\kappa)$, but does not satisfy $\S1k(\Gamma_\kappa, \mathcal{O}_\kappa)$.
\end{Th}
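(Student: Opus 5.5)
Two halves.

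\emph{$2^\kappa$ satisfies $\Sk(\Omega_\kappa,\Omega_\kappa)$.} The plan is to use $\kappa$-compactness of $(2^\kappa)^\gamma$ for every $\gamma<\kappa$. Since $\kappa$ is strongly inaccessible, $(2^\kappa)^\gamma$ is homeomorphic to $2^{\kappa\cdot\gamma}\cong 2^\kappa$ with the $\kappa$-product topology, so it is $\kappa$-compact. Let $(\mathcal{U}_\alpha:\alpha<\kappa)\in\Omega_\kappa(2^\kappa)^\kappa$. Fix $\alpha$ and $\gamma<\kappa$, and consider the open sets
\[\{U^\gamma : U\in\mathcal{U}_\alpha\}\subseteq (2^\kappa)^\gamma.\]
Because $\mathcal{U}_\alpha$ is a $\kappa$-cover, every point $(x_i)_{i<\gamma}$ of $(2^\kappa)^\gamma$, being a set of fewer than $\kappa$ points, lies in some $U^\gamma$. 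So these sets form an open cover of $(2^\kappa)^\gamma$, and $\kappa$-compactness gives $\mathcal{V}_{\alpha,\gamma}\in[\mathcal{U}_\alpha]^{<\kappa}$ such that every $F\in[2^\kappa]^{\le|\gamma|}$ lies in a member of $\mathcal{V}_{\alpha,\gamma}$. Put $\mathcal{V}_\alpha=\mathcal{V}_{\alpha,\alpha}$ for infinite $\alpha$ (and $\mathcal{V}_{\alpha,\omega}$ for finite $\alpha$). Any $F\in[2^\kappa]^{<\kappa}$ has $|F|\le|\alpha|$ for some $\alpha$, so $\bigcup_\alpha\mathcal{V}_\alpha$ is a $\kappa$-cover. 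It does not contain $X$, and it has size $\kappa$: by Fact~\ref{fact1} we may enlarge each $\mathcal{V}_\alpha$ by one new member so that the union has size $\kappa$. The main point to check is the homeomorphism $(2^\kappa)^\gamma\cong 2^\kappa$, i.e.\ that the $\kappa$-product topology on the product agrees with the $\kappa$-product topology on $2^{\kappa\times\gamma}$. This holds because basic sets restrict fewer than $\kappa$ coordinates and $\gamma<\kappa$ with $\kappa$ regular.

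\emph{$2^\kappa$ fails $\S1k(\Gamma_\kappa,\mathcal{O}_\kappa)$.} By Theorem~\ref{lemma12}, $\non(\S1k(\Gamma_\kappa,\mathcal{O}_\kappa))=\dk$, and here $\kappa^{<\kappa}=\kappa$. A set of size less than $\dk$ would satisfy the principle, so this alone does not help; instead I construct explicit $\gamma_\kappa$-covers. Let $V_\xi^{(\alpha)}=\{x\in 2^\kappa : x\restriction[\alpha\text{-block}]\ne e_\xi\}$ in the following sense. Partition $\kappa$ into blocks $I_\alpha$ of size $\kappa$, and for each $\alpha$ put
\[U^{(\alpha)}_\xi=\{x : x(\eta)=0 \text{ for some } \eta\in I_\alpha,\ \eta\ge\xi\}\cup\cdots\]
Simpler: work in $\kappa^\kappa$-style coordinates. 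For $\alpha<\kappa$ and $\xi<\kappa$ let
\[U^{(\alpha)}_\xi=\{x\in 2^\kappa : x(\langle\alpha,\eta\rangle)=1 \text{ for some } \eta\ge\xi\}\cup\{x : x(\langle\alpha,\eta\rangle)=0\ \forall\eta\}.\]
This last set is not open, so the construction must be modified. Instead, take the clopen sets
\[C^{(\alpha)}_\xi=\{x : x(\langle\alpha,\xi\rangle)=1,\ x(\langle\alpha,\eta\rangle)=0\ \forall \eta<\xi\}\]
together with $D^{(\alpha)}_\xi=\{x : x(\langle\alpha,\eta\rangle)=0\ \forall\eta<\xi\}$. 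Every $x$ lies in all but fewer than $\kappa$ of the sets $W^{(\alpha)}_\xi=\bigcup_{\eta\le\xi}C^{(\alpha)}_\eta\cup(\text{complement of } D^{(\alpha)}_{\xi+1})$.

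The expected obstacle is producing genuinely open $\gamma_\kappa$-covers indexed so that any single selection misses a point. My first attempt is the following. Let $\mathcal{U}_\alpha=\{U^{(\alpha)}_\xi:\xi<\kappa\}$ with
\[U^{(\alpha)}_\xi=\{x: \exists\eta\le\xi\ x(\langle\alpha,\eta\rangle)=1\}\cup\{x: x(\langle\alpha,\eta\rangle)=0\ \forall\eta\le\xi,\ x(\langle\alpha,\xi+1\rangle)\ldots\}.\]
If this stalls, fall back on $\kappa$-compactness directly. Given any selection $U_\alpha=U^{(\alpha)}_{\xi_\alpha}$, the point $x$ with $x(\langle\alpha,\eta\rangle)=0$ for $\eta\le\xi_\alpha$ and $1$ at $\xi_\alpha+1$ must avoid $U_\alpha$ for every $\alpha$ simultaneously. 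So I need covers $\mathcal{U}_\alpha$ whose $\xi$-th member is the clopen set ``the first $1$ in block $\alpha$ occurs at or before $\xi$'', with the all-zero-block points handled by the $\gamma_\kappa$ requirement failing for them. Since those points lie in no member, I would restrict to the closed (hence $\kappa$-compact, since $2^\kappa$ is $\kappa$-compact) subspace of points having a $1$ in every block.

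Finally, I would transfer the failure from this subspace to $2^\kappa$ by covering the complement with one extra clopen piece added to each member. Writing this step out carefully is the main obstacle.
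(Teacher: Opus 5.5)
Your first half is correct and follows the paper's argument, with the details the paper leaves out filled in. For $\gamma<\kappa$, the space $(2^\kappa)^\gamma\cong 2^{\kappa\times\gamma}\cong 2^\kappa$ is $\kappa$-compact. The sets $U^\gamma$ with $U\in\mathcal{U}_\alpha$ form an open cover of it. Letting $\gamma$ grow with $\alpha$ then handles every $F\in[2^\kappa]^{<\kappa}$.

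The second half has a genuine gap: you never exhibit open $\gamma_\kappa$-covers of $2^\kappa$ that defeat every selection.

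\begin{itemize}
\item Your increasing sets $W^{(\alpha)}_\xi$ (first $1$ of block $\alpha$ at or before $\xi$) omit every point that vanishes on block $\alpha$, so they do not cover $2^\kappa$.
\item The proposed repair fails as stated. The set $Y$ of points with a $1$ in every block is not closed: it is an intersection of $\kappa$ many open sets, and the zero function lies in $\overline{Y}\setminus Y$. Indeed, a basic neighbourhood of the zero function fixes fewer than $\kappa$ coordinates. This leaves a free coordinate in each block, which can be set to $1$.
\item So failure of the principle on $Y$ does not transfer to $2^\kappa$, and the transfer step you yourself call the main obstacle is never carried out.
\end{itemize}

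The missing idea is to take complements of your own pairwise disjoint clopen sets, that is, to put $\mathcal{U}_\alpha=\{2^\kappa\setminus C^{(\alpha)}_\xi:\xi<\kappa\}$.

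\begin{itemize}
\item Since the $C^{(\alpha)}_\xi$ are nonempty and pairwise disjoint, these are $\kappa$ distinct proper clopen sets.
\item Each point lies outside at most one of them, so $\mathcal{U}_\alpha\in\Gamma_\kappa(2^\kappa)$ with no exceptional points at all: a point vanishing on block $\alpha$ lies in every member.
\item Given any selection $2^\kappa\setminus C^{(\alpha)}_{\xi_\alpha}$, $\alpha<\kappa$, the disjointness of the blocks lets you choose a single $x$ with $x\in C^{(\alpha)}_{\xi_\alpha}$ for every $\alpha$. Then $x$ lies in no selected set.
\end{itemize}

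This is exactly the paper's construction. Note that $2^\kappa\setminus C^{(\alpha)}_\xi=\bigcup_{\eta<\xi}C^{(\alpha)}_\eta\cup D^{(\alpha)}_{\xi+1}$, i.e.\ your ``first $1$ occurs early'' set enlarged by one clopen piece. So this is what your ``one extra clopen piece'' idea should have produced.
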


\begin{proof}
Since for every cardinal $0< \lambda< \kappa$, $(2^\kappa)^\lambda$ endowed with the box topology is $\kappa$-compact, $2^\kappa$ satisfies $\Sk(\Omega_\kappa, \Omega_\kappa)$. We next prove that $2^\kappa$ does not satisfy $\S1k(\Gamma_\kappa, \mathcal{O}_\kappa)$.

Partition $\kappa$ into pairwise disjoint sets $\kappa = \bigcup_{\alpha< \kappa} I_\alpha$, $|I_\alpha| = \kappa$. For each $\alpha< \kappa$, fix a bijection $e_\alpha : \kappa\to I_\alpha$. For each $\xi< \kappa$, put \[B_\xi^{(\alpha)} = \{x\in 2^\kappa : x(e_\alpha(\eta)) = 0 \text{ for every } \eta< \xi \text{ and } x(e_\alpha(\xi)) = 1\}.\] Each $B_\xi^{(\alpha)}$ is a nonempty clopen subset of $2^\kappa$, and for each fixed $\alpha$, the family $\{B_\xi^{(\alpha)} : \xi< \kappa\}$ is pairwise disjoint. Define $\mathcal{U}_\alpha = \{2^\kappa \setminus B_\xi^{(\alpha)} : \xi< \kappa\}$. Then $\mathcal{U}_\alpha \in \Gamma_\kappa(2^\kappa)$.

Now choose arbitrarily $U_\alpha \in \mathcal{U}_\alpha$, $\alpha< \kappa$. Write $U_\alpha = 2^\kappa \setminus B_{\xi_\alpha}^{(\alpha)}$ for some $\xi_\alpha < \kappa$. Since the sets $I_\alpha$, $\alpha< \kappa$, are pairwise disjoint, we may define $x\in 2^\kappa$ so that for every $\alpha< \kappa$,
\[x(e_\alpha(\eta)) = \begin{cases}
0, & \mbox{if } \eta< \xi_\alpha\\
1, & \mbox{if } \eta = \xi_\alpha.
\end{cases}\]
The values of $x$ on the remaining coordinates may be chosen arbitrarily. Thus $x\in B_{\xi_\alpha}^{(\alpha)}$ for every $\alpha< \kappa$. Consequently, $x\notin U_\alpha$ for every $\alpha< \kappa$. Thus $\{U_\alpha : \alpha< \kappa\}$ does not cover $2^\kappa$. Hence $2^\kappa$ does not satisfy $\S1k(\Gamma_\kappa, \mathcal{O}_\kappa)$.
\end{proof}

\begin{Cor}
\label{cor7}
Assume that $\kappa$ is weakly compact. Then the $\kappa$-Cantor space $2^\kappa$ satisfies $\Uk(\mathcal{O}_\kappa, \Gamma_\kappa)$, but does not satisfy $\S1k(\Gamma_\kappa, \Gamma_\kappa)$.
\end{Cor}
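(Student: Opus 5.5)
We prove the two halves of the corollary separately.

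\emph{The $\kappa$-Cantor space $2^\kappa$ satisfies $\Uk(\mathcal{O}_\kappa, \Gamma_\kappa)$.} The plan is to use $\kappa$-compactness, which holds because $\kappa$ is weakly compact (\cite[Theorem~8.23]{cn74}). The hypothesis of $\Uk(\mathcal{O}_\kappa,\Gamma_\kappa)$ only concerns sequences $(\mathcal{U}_\alpha : \alpha<\kappa)\in\mathcal{O}_\kappa(2^\kappa)^\kappa$ in which no $\mathcal{U}_\alpha$ has a subcover of size less than $\kappa$. Since $2^\kappa$ is $\kappa$-compact, every open cover of $2^\kappa$ has such a subcover. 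Hence there are no such sequences, and $\Uk(\mathcal{O}_\kappa, \Gamma_\kappa)$ holds vacuously.

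There is one point to double-check. In the definition, the clause ``with no $\mathcal{U}_\alpha$ has a subcover of size less than $\kappa$'' is a restriction on the sequences to be handled. It is not required that every $\mathcal{U}_\alpha$ fail to have such a subcover. This matches how the clause is used in the proofs of Theorems~\ref{thm3} and~\ref{thm4}, so the vacuous-truth argument is legitimate.

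\emph{The space $2^\kappa$ does not satisfy $\S1k(\Gamma_\kappa,\Gamma_\kappa)$.} By Remark~\ref{rem7}(2), since a weakly compact $\kappa$ is regular, we have
\[\S1k(\Gamma_\kappa,\Gamma_\kappa) \rightarrow \S1k(\Gamma_\kappa,\Omega_\kappa) \rightarrow \S1k(\Gamma_\kappa,\mathcal{O}_\kappa),\]
where the second implication is Remark~\ref{rem7}(1). Theorem~\ref{thm10} shows that $2^\kappa$ fails $\S1k(\Gamma_\kappa,\mathcal{O}_\kappa)$, so it also fails $\S1k(\Gamma_\kappa,\Gamma_\kappa)$.

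A direct argument is also available and avoids the chain of implications. Take the $\gamma_\kappa$-covers $\mathcal{U}_\alpha=\{2^\kappa\setminus B^{(\alpha)}_\xi : \xi<\kappa\}$ from the proof of Theorem~\ref{thm10}. For any selection $U_\alpha\in\mathcal{U}_\alpha$, that proof produces a point $x$ lying outside every $U_\alpha$. Then the selected family is not even a cover, so it is certainly not a $\gamma_\kappa$-cover.

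There is no real obstacle in this proof. The only step needing care is confirming that the vacuous reading of $\Uk$ is intended; the rest is routine.
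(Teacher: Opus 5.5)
Your proof is correct and follows the route the paper intends for this corollary, which is stated without proof right after Theorem~\ref{thm10}: $\kappa$-compactness of $2^\kappa$ makes $\Uk(\mathcal{O}_\kappa,\Gamma_\kappa)$ hold trivially (alternatively, $2^\kappa$ is $K_\kappa$ and hence $\kappa$-Hurewicz), and the failure of $S_1^\kappa(\Gamma_\kappa,\Gamma_\kappa)$ comes from Theorem~\ref{thm10}, either through the implications of Remark~\ref{rem7} or directly because the selected family is not even a cover. One wording fix: the clause in $\Uk$ \emph{does} require every $\mathcal{U}_\alpha$ in the sequence to lack a subcover of size less than $\kappa$, and that is why a single $\mathcal{U}_\alpha$ having such a subcover (here every one has) removes the sequence from consideration.
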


The following observation is a consequence of Theorems~\labelcref{thm16,thm17}.

\begin{Prop}
\label{prop2}
Assume that $\kappa$ is regular. If $\pk< \bk$, then $\S1k(\Gamma_\kappa, \Gamma_\kappa)$ does not imply $\S1k(\Omega_\kappa, \Gamma_\kappa)$.
\end{Prop}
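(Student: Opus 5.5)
The plan is to compare critical cardinalities, using Theorems~\ref{thm16} and~\ref{thm17}: under $\kappa^{<\kappa}=\kappa$ they give $\non(\S1k(\Gamma_\kappa,\Gamma_\kappa))=\bk$ and $\non(\S1k(\Omega_\kappa,\Gamma_\kappa))=\pk$. Those theorems assume $\kappa^{<\kappa}=\kappa$, whereas the proposition only assumes regularity. So the first step is to check which halves of the computations actually use $\kappa^{<\kappa}=\kappa$. If both needed halves survive with regularity alone, the argument goes through as stated. If not, I will add the hypothesis $\kappa^{<\kappa}=\kappa$, which is the setting of the cited results.

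The two halves needed are as follows.
\begin{itemize}
\item[(a)] There is a set $X\subseteq P(\kappa)$ with $|X|=\pk$ failing $\S1k(\Omega_\kappa,\Gamma_\kappa)$. This is the upper bound $\non(\S1k(\Omega_\kappa,\Gamma_\kappa))\le\pk$.
\item[(b)] Every $X\subseteq P(\kappa)$ with $|X|<\bk$ satisfies $\S1k(\Gamma_\kappa,\Gamma_\kappa)$. This is the lower bound $\non(\S1k(\Gamma_\kappa,\Gamma_\kappa))\ge\bk$.
\end{itemize}

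Given these, take $X$ from (a). Since $\pk<\bk$, we have $|X|=\pk<\bk$, so by (b) $X$ satisfies $\S1k(\Gamma_\kappa,\Gamma_\kappa)$ but not $\S1k(\Omega_\kappa,\Gamma_\kappa)$. This witnesses the non-implication.

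For (b), I would recall the standard argument. Given $\gamma_\kappa$-covers $\mathcal{U}_\alpha=\{U^{(\alpha)}_\xi:\xi<\kappa\}$, assign to each $x$ the function $f_x(\alpha)=\sup\{\xi+1: x\notin U^{(\alpha)}_\xi\}$. This is $<\kappa$ by regularity, since the $\gamma_\kappa$ property makes the set of such $\xi$ have size $<\kappa$. Fewer than $\bk$ such functions are bounded by some $g$. Choosing $U^{(\alpha)}_{\xi_\alpha}$ with $\xi_\alpha\ge g(\alpha)$ and the chosen sets pairwise distinct gives a $\gamma_\kappa$-cover. This uses only regularity.

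For (a), one takes a $\kappa$-centered family without $\kappa$-pseudointersection, closed under intersections of fewer than $\kappa$ members, together with $\fink$. The main obstacle is this closure step, which seems to need $\kappa^{<\kappa}=\kappa$ to keep the size at $\pk$. If the closure step fails, I would state the result under $\kappa^{<\kappa}=\kappa$, in line with Theorem~\ref{thm17}. Finally, I note that consistency of $\pk<\bk$ is not needed, since the statement is conditional.
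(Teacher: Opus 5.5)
Your route is the paper's own: the paper derives the proposition in one line from Theorems~\ref{thm16} and~\ref{thm17}, comparing $\non(S_1^\kappa(\Gamma_\kappa,\Gamma_\kappa))=\bk$ with $\non(S_1^\kappa(\Omega_\kappa,\Gamma_\kappa))=\pk$ exactly as you do. Both cited theorems carry the hypothesis $\kappa^{<\kappa}=\kappa$, so the mismatch you spotted is present in the paper's derivation as well. Your half (b) is correct and uses only regularity.

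The one weakness is that you leave (a) unresolved and offer to fall back on citing Theorem~\ref{thm17}. That fallback would prove only a weaker statement than the one given. The obstacle you identify is not real: (a) needs neither closure under intersections of fewer than $\kappa$ sets nor $\fink$.

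Take $A\subseteq[\kappa]^\kappa$ that is $\kappa$-centered, has no $\kappa$-pseudointersection, and has $|A|=\pk$.
\begin{enumerate}
\item Then $|\bigcap A|<\kappa$, since otherwise $\bigcap A$ would itself be a $\kappa$-pseudointersection.
\item For $\beta\in\kappa\setminus\bigcap A$, let $U_\beta=\{x\in A:\beta\in x\}$. This is a proper clopen subset of $A$.
\item If $F\in[A]^{<\kappa}$ is nonempty, then $|\bigcap F|=\kappa$. So some $\beta\in\bigcap F\setminus\bigcap A$ gives $F\subseteq U_\beta$. Hence $\mathcal{U}=\{U_\beta:\beta\in\kappa\setminus\bigcap A\}$ is a $\kappa$-cover of $A$.
\item $\mathcal{U}$ has size exactly $\kappa$, because fewer than $\kappa$ proper subsets cannot form a $\kappa$-cover (pick one point outside each).
\item Apply $S_1^\kappa(\Omega_\kappa,\Gamma_\kappa)$ to the constant sequence $(\mathcal{U}:\alpha<\kappa)$. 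This gives $\beta_\alpha$ with $\{U_{\beta_\alpha}:\alpha<\kappa\}\in\Gamma_\kappa(A)$.
\item Keep one index for each distinct set; this yields $b\in[\kappa]^\kappa$. For every $a\in A$, the set $b\setminus a=\{\beta\in b:a\notin U_\beta\}$ has size less than $\kappa$ by the $\gamma_\kappa$ property.
\item So $b$ is a $\kappa$-pseudointersection of $A$, a contradiction. Hence $A$ fails $S_1^\kappa(\Omega_\kappa,\Gamma_\kappa)$.
\end{enumerate}
Thus (a) uses neither regularity nor $\kappa^{<\kappa}=\kappa$, and (b) uses only regularity. Your comparison argument therefore proves the proposition exactly as stated, with no extra hypothesis.
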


\begin{Rem}
\label{rem2}
In \cite[Theorem~4.8 and Observation~4.2]{fms22}, the authors showed that, assuming GCH, it is consistent that $\pk = \kappa^+ < \bk = 2^\kappa$ for a cardinal $\kappa$ satisfying $\kappa^{<\kappa} = \kappa$.
\end{Rem}

Theorems~\labelcref{thm9,lemma11} give the following observation.

\begin{Prop}
\label{prop3}
Assume $\kappa^{<\kappa} = \kappa$. If $\ink< \dlk$, then $\S1k(\Gamma_\kappa, \Omega_\kappa)$ does not imply $\S1k(\mathcal{O}_\kappa, \mathcal{O}_\kappa)$.
\end{Prop}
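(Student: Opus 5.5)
The plan is to compare the critical cardinalities in Theorems~\ref{thm9} and~\ref{lemma11} and to use a space whose size lies strictly between them.

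First, since $\kappa^{<\kappa}=\kappa$, Theorem~\ref{thm9} gives $\non(\S1k(\mathcal{O}_\kappa,\mathcal{O}_\kappa))=\ink$. By the definition of the critical cardinality, we can therefore fix a subspace $X\subseteq P(\kappa)$ with $|X|=\ink$ that does not satisfy $\S1k(\mathcal{O}_\kappa,\mathcal{O}_\kappa)$. Concretely, one can take an eventually different family $E\subseteq\kappa^\kappa$ of size $\ink$. By Lemma~\ref{lemma10}, applied to the identity map (which is legitimate because $\kappa^\kappa$ is a zero-dimensional $P_\kappa$-space with a base of size $\kappa$, so its subspaces are $\kappa$-Lindel\"of), $E$ is not $\kappa$-Rothberger. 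One then transfers $E$ to a homeomorphic copy inside $P(\kappa)$, for instance via the embedding of $\kappa^\kappa$ onto the increasing enumerations in $[\kappa]^\kappa$ recalled in Section~\ref{sec-cb}. The only point to check here is that this transfer preserves failure of the property, since homeomorphic spaces satisfy the same selection principles.

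Second, since $|X|=\ink<\dlk=\non(\S1k(\Gamma_\kappa,\Omega_\kappa))$ by Theorem~\ref{lemma11}, minimality of the critical cardinality forces $X$ to satisfy $\S1k(\Gamma_\kappa,\Omega_\kappa)$. Thus $X$ witnesses the non-implication.

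The only mildly delicate step is ensuring that the witness for Theorem~\ref{thm9} is literally a subspace of $P(\kappa)$, which the definition of $\non$ already provides. I therefore do not expect a genuine obstacle: the statement is a direct corollary of the two critical cardinality computations.
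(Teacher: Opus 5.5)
Your proposal is correct and is exactly the paper's argument. Theorem~\ref{thm9} supplies $X\subseteq P(\kappa)$ with $|X|=\ink$ that is not $\kappa$-Rothberger, and since $|X|<\dlk$, Theorem~\ref{lemma11} forces $X$ to satisfy $S_1^\kappa(\Gamma_\kappa,\Omega_\kappa)$.

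Your supplementary ``concrete'' construction is not needed, and as phrased it is slightly off. Section~\ref{sec-cb} identifies $[\kappa]^\kappa$ with the strictly increasing members of $\kappa^\kappa$; it does not embed all of $\kappa^\kappa$ into $[\kappa]^\kappa$. Since an eventually different family need not consist of increasing functions, you would need a genuine embedding of $\kappa^\kappa$ into $2^\kappa$, for instance coding graphs through a bijection $\kappa\times\kappa\to\kappa$.
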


\begin{Rem}
\label{rem3}
Assume that the existence of a supercompact cardinal is consistent. Then it is consistent that there exists a strongly inaccessible cardinal $\kappa$ such that $\ink< \dlk$.
\end{Rem}

\begin{proof}
Start with a model of GCH containing a supercompact cardinal $\kappa$. By Theorem~2.1 of Shelah \cite{shelah20}, there is a cardinal-preserving forcing extension in which $\kappa$ remains supercompact and $\cov(\mathcal{M}_\kappa) = \kappa^+$ and $\dk = \kappa^{++}$. Since $\kappa$ remains supercompact, it is strongly inaccessible in the extension, and hence $(\kappa^+)^{<\kappa} = \kappa^+$.

Since $\kappa$ is strongly inaccessible, $\ink = \cov(\mathcal{M}_\kappa)$ (see \cite{lander92}), and so $\ink = \kappa^+$. We now show that $\dlk = \kappa^{++}$. Certainly $\dlk \leq \dk = \kappa^{++}$. Suppose that $D\subseteq \kappa^\kappa$ is such that $\maxk(D)$ is dominating and $|D|< \kappa^{++}$. Then $|D|\leq \kappa^+$. It follows that $|\maxk(D)| \leq |D|^{<\kappa} \leq( \kappa^+)^{<\kappa} = \kappa^+$. This contradicts $\dk = \kappa^{++}$. Hence by Lemma~\ref{lemma8}, $\dlk = \kappa^{++}$. Therefore, $\ink = \kappa^+ < \kappa^{++} = \dlk$.
\end{proof}

\begin{Th}
\label{thm11}
Assume $\kappa^{<\kappa} = \kappa$. If $\pk = \kappa^+ < \cov(\mathcal{M}_\kappa)$, then $\S1k(\Omega_\kappa, \Omega_\kappa)$ does not imply $\S1k(\Omega_\kappa, \Gamma_\kappa)$.
\end{Th}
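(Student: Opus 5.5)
We need a space $X\subseteq P(\kappa)$ satisfying $\S1k(\Omega_\kappa,\Omega_\kappa)$ but not $\S1k(\Omega_\kappa,\Gamma_\kappa)$. Since $\non(\S1k(\Omega_\kappa,\Gamma_\kappa)) = \pk = \kappa^+$ by Theorem~\ref{thm17}, there is $X\subseteq P(\kappa)$ of size $\kappa^+$ failing $\S1k(\Omega_\kappa,\Gamma_\kappa)$. It therefore suffices to show that every $X\subseteq P(\kappa)$ with $|X|<\cov(\mathcal{M}_\kappa)$ satisfies $\S1k(\Omega_\kappa,\Omega_\kappa)$. In other words, the plan is to prove $\non(\S1k(\Omega_\kappa,\Omega_\kappa))\geq \cov(\mathcal{M}_\kappa)$ under $\kappa^{<\kappa}=\kappa$. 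The hypothesis $\kappa^+<\cov(\mathcal{M}_\kappa)$ then gives $|X| = \kappa^+ < \cov(\mathcal{M}_\kappa)$, which finishes the proof.

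For the main step, fix $X\subseteq P(\kappa)$ with $|X|<\cov(\mathcal{M}_\kappa)$ and a sequence $(\mathcal{U}_\alpha:\alpha<\kappa)\in\Omega_\kappa(X)^\kappa$. Enumerate each $\mathcal{U}_\alpha=\{U^{(\alpha)}_\xi:\xi<\kappa\}$. Each $g\in\kappa^\kappa$ then codes the selection $U_\alpha = U^{(\alpha)}_{g(\alpha)}$. We first reduce the requirement to finitely many ``local'' demands. Using $\kappa^{<\kappa}=\kappa$, fix a partition of $\kappa$ into $\kappa$ pieces $I_\beta$, each of size $\kappa$. For a set $F\in[X]^{<\kappa}$, we want some $\alpha$ with $F\subseteq U^{(\alpha)}_{g(\alpha)}$. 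For each $F\in[X]^{<\kappa}$, consider
\[N_F=\{g\in\kappa^\kappa : F\not\subseteq U^{(\alpha)}_{g(\alpha)} \text{ for all } \alpha<\kappa\}.\]
Since each $\mathcal{U}_\alpha$ is a $\kappa$-cover, by Fact~\ref{fact1} the set $N_F$ is closed nowhere dense. Indeed, any basic open set $[s]$ with $s\in\kappa^{<\kappa}$ can be extended at a coordinate $\alpha\notin\dom(s)$ to a value $\xi$ with $F\subseteq U^{(\alpha)}_\xi$.

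However, $|[X]^{<\kappa}|$ may exceed $|X|$, so covering the $N_F$ directly is problematic. To handle this, I would instead code, for each $x\in X$, a single $\kappa$-meager set. For each $x$, let $f_x\in(2^{\kappa})^{\kappa}$ record membership, $f_x(\alpha)(\xi)=1$ iff $x\in U^{(\alpha)}_\xi$, or equivalently work coordinatewise. The aim is a family of fewer than $\cov(\mathcal{M}_\kappa)$ meager sets whose avoidance guarantees that $\{U_\alpha\}$ is a $\kappa$-cover.

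The cleaner route, which I expect to be the main obstacle, is to mimic the classical proof that $\non(\S1(\Omega,\Omega))=\cov(\mathcal M)$ through the characterization $\S1(\Omega,\Omega)=$ Rothberger in all finite powers. In our setting this becomes: $X$ satisfies $\S1k(\Omega_\kappa,\Omega_\kappa)$ iff every $\lambda$-power $X^\lambda$ for $\lambda<\kappa$, with the $\kappa$-box topology, is $\kappa$-Rothberger. The proof transfers the argument of Theorem~\ref{thm6}, grouping $\kappa$-covers by $\kappa$ blocks $I_\beta$ indexed by the $<\kappa$-sized sets. Granting this characterization, note that $|X^\lambda|\leq|X|^{<\kappa}$. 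Moreover $X^\lambda$ embeds into $P(\kappa\times\lambda)\cong P(\kappa)$. Hence by Corollary~\ref{cor6}, or Theorem~\ref{thm9} together with Landver's identity $\ink=\cov(\mathcal{M}_\kappa)$ (which may need $\kappa^{<\kappa}=\kappa$ in place of inaccessibility; otherwise use the nowhere dense sets $N_F$ directly), $X^\lambda$ is $\kappa$-Rothberger provided $|X|^{<\kappa}<\cov(\mathcal{M}_\kappa)$.

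For our $X$ of size $\kappa^+$ we have $(\kappa^+)^{<\kappa}=\kappa^+$, since $\kappa^{<\kappa}=\kappa$ and $\kappa^+$ is regular. So $|X|^{<\kappa}=\kappa^+<\cov(\mathcal{M}_\kappa)$, and the approach closes without needing a general cardinal bound.

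In fact, with $|[X]^{<\kappa}|=\kappa^+<\cov(\mathcal{M}_\kappa)$, the direct meager argument already works with no power characterization. The $\kappa^+$ sets $N_F$ from above do not cover $\kappa^\kappa$. Applying this to the coding on each block $I_\beta$, via a homeomorphism $\kappa^\kappa\cong(\kappa^\kappa)^\kappa$, yields a $g$ lying in no $N_F$. The selected sets then form a $\kappa$-cover. We also need this family to be of size $\kappa$ and not contain $X$, and the block structure arranges this by choosing distinct members as in Theorem~\ref{thm6}. Thus $X\in\S1k(\Omega_\kappa,\Omega_\kappa)\setminus\S1k(\Omega_\kappa,\Gamma_\kappa)$.
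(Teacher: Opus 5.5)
Your final paragraph is exactly the paper's argument: since $|[X]^{<\kappa}|=\kappa^+<\cov(\mathcal{M}_\kappa)$, the closed nowhere dense sets $N_F$ cannot cover $\kappa^\kappa$, and any $g$ lying outside all of them selects a $\kappa$-cover. The detours can all be dropped: the general bound for $|X|<\cov(\mathcal{M}_\kappa)$ is unproved (the paper lists this critical cardinality as open), and the power characterization, Landver's identity and the block/distinctness repair are not needed; in particular the size-$\kappa$ requirement is automatic, because a $\kappa$-cover consisting of proper subsets of $X$ cannot have fewer than $\kappa$ members.
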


\begin{proof}
Since $\non(\S1k(\Omega_\kappa, \Gamma_\kappa)) = \pk$ (see Theorem~\ref{thm17}), there exists a space $X\subseteq P(\kappa)$ with $|X| = \pk = \kappa^+$ such that $X$ does not satisfy $\S1k(\Omega_\kappa, \Gamma_\kappa)$. We now show that $X$ satisfies $\S1k(\Omega_\kappa, \Omega_\kappa)$. Since $|[X]^{<\kappa}| = \max \{\kappa^+, \kappa^{<\kappa}\}$ and $\kappa^{<\kappa} = \kappa$, we get $|[X]^{<\kappa}| = \kappa^+ < \cov(\mathcal{M}_\kappa)$. Let $(\mathcal{U}_\alpha : \alpha< \kappa)\in \Omega_\kappa(X)^\kappa$, and write $\mathcal{U}_\alpha = \{U_\xi^{(\alpha)} : \xi< \kappa\}$. For every $F\in [X]^{<\kappa}$, define \[D_F = \left\{g\in \kappa^\kappa : (\exists \alpha< \kappa)\, F\subseteq U_{g(\alpha)}^{(\alpha)}\right\}.\] Then $\kappa^\kappa \setminus D_F$ is nowhere dense for every $F\in [X]^{<\kappa}$. Since $|[X]^{<\kappa}| < \cov(\mathcal{M}_\kappa)$, the family $\{\kappa^\kappa \setminus D_F : F\in [X]^{<\kappa}\}$ cannot cover $\kappa^\kappa$. Hence $\bigcap_{F\in [X]^{<\kappa}} D_F\neq \emptyset$. Choose $g\in \bigcap_{F\in [X]^{<\kappa}} D_F$. Then $\mathcal{V} = \{U_{g(\alpha)}^{(\alpha)} : \alpha< \kappa\}$ is a $\kappa$-cover of $X$, and therefore, $X$ satisfies $\S1k(\Omega_\kappa, \Omega_\kappa)$.
\end{proof}

\begin{Rem}
\label{rem5}
By Theorem~4.2 of Honzik and Stejskalov\'{a} \cite{hs25}, it is consistent, relative to the consistency of the existence of a supercompact cardinal with a weakly compact cardinal above it, that there exists a strongly inaccessible cardinal $\kappa$ such that $\pk = \kappa^+ < \cov(\mathcal{M}_\kappa)$. \end{Rem}

\begin{Lemma}
\label{lemma15}
Assume $\kappa^{<\kappa} = \kappa$. If $X\subseteq 2^\kappa$ satisfies $\S1k(\Omega_\kappa, \Omega_\kappa)$, then $X^2$ is $\kappa$-Rothberger.
\end{Lemma}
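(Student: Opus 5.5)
We follow the classical argument that for $X$ with $S_1(\Omega,\Omega)$ every finite power is Rothberger. Let $X\subseteq 2^\kappa$ satisfy $\S1k(\Omega_\kappa, \Omega_\kappa)$ and let $(\mathcal{W}_\alpha : \alpha< \kappa)\in \mathcal{O}_\kappa(X^2)^\kappa$. Since $\kappa^{<\kappa}=\kappa$, $X^2$ is a $\kappa$-Lindel\"of, zero-dimensional $P_\kappa$-space with a base of basic clopen rectangles $[s]\times[t]$, $s,t\in 2^{<\kappa}$, and there are only $\kappa$ many of them.

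As in the proof of Theorem~\ref{thm6}, we first split $\kappa=\bigcup_{\alpha<\kappa} I_\alpha$ into $\kappa$ many pieces of size $\kappa$. For each $\alpha$ we form the family $\mathcal{V}_\alpha$ of all proper open sets $V\subseteq X$ of the following kind: there are $F\in[I_\alpha]^{<\kappa}$ and $W_\xi\in\mathcal{W}_\xi$ for $\xi\in F$ such that $V\times V\subseteq \bigcup_{\xi\in F}W_\xi$. We may shrink the $W_\xi$ to basic rectangles, so $V$ can be taken to be a union of fewer than $\kappa$ basic clopen sets, and then $|\mathcal{V}_\alpha|\le\kappa$.

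To see that $\mathcal{V}_\alpha$ is a $\kappa$-cover, fix $A\in[X]^{<\kappa}$. For each pair $(x,y)\in A^2$ pick a distinct index $\xi_{x,y}\in I_\alpha$ and a $W_{\xi_{x,y}}\ni(x,y)$. Inside it, pick a basic rectangle $[s_{x,y}]\times[t_{x,y}]$ containing $(x,y)$. Put $V=\bigcap_{x,y\in A}\bigl([s_{x,y}]\cup\dots\bigr)$; more precisely, for each $x\in A$ let $N_x$ be the intersection over $y\in A$ of the basic neighbourhoods of $x$ appearing as $[s_{x,y}]$ or $[t_{y,x}]$. Then $N_x$ is open by the $P_\kappa$ property, and we set $V=\bigcup_{x\in A}N_x$. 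Any $(u,v)\in V^2$ lies in some $N_x\times N_y\subseteq [s_{x,y}]\times[t_{x,y}]\subseteq W_{\xi_{x,y}}$, so $V\in\mathcal{V}_\alpha$ provided $V\ne X$.

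The degenerate case $V=X$ is handled as in Theorem~\ref{thm6}: we then already obtain a selection from fewer than $\kappa$ of the $\mathcal{W}_\xi$ covering $X^2$, and we pad the remaining indices with distinct members to get a cover of size $\kappa$. We also need $|\mathcal{V}_\alpha|=\kappa$, which follows from Fact~\ref{fact1}-type arguments once it is a $\kappa$-cover with $X\notin\mathcal{V}_\alpha$ and $|X|\ge\kappa$; the case $|X|<\kappa$ is handled directly.

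Now we apply $\S1k(\Omega_\kappa,\Omega_\kappa)$ to obtain $V_\alpha\in\mathcal{V}_\alpha$ such that $\{V_\alpha:\alpha<\kappa\}$ is a $\kappa$-cover of $X$. For $(x,y)\in X^2$, some $V_\alpha\supseteq\{x,y\}$, so $(x,y)\in V_\alpha^2\subseteq\bigcup_{\xi\in F_\alpha}W^{(\alpha)}_\xi$. The problem is that this uses fewer than $\kappa$ selections from each block $I_\alpha$, whereas Rothberger requires exactly one selection per cover. Since the sets $F_\alpha\subseteq I_\alpha$ are disjoint, however, the selections $W^{(\alpha)}_\xi$ ($\xi\in F_\alpha$) are made from distinct covers $\mathcal{W}_\xi$, and this is exactly the bookkeeping used in Theorem~\ref{thm6}. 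We choose arbitrary distinct members for the unused indices to make the family have size $\kappa$.

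The main obstacle is verifying the cardinality bound $|\mathcal{V}_\alpha|\le\kappa$ together with the claim that $V$ is open but proper. This is where $\kappa^{<\kappa}=\kappa$ and the $P_\kappa$ property of $2^\kappa$ are essential: unions of fewer than $\kappa$ basic clopen sets number $\kappa^{<\kappa}=\kappa$.
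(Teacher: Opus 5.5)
Your proposal is correct and follows essentially the same route as the paper. The shared steps are: blocks $I_\alpha$ of indices; the family of proper sets $V$, each a union of fewer than $\kappa$ basic sets, with $V^2$ covered by fewer than $\kappa$ selections from block $I_\alpha$; the construction $N_x=\bigcap_{y\in A}([s_{x,y}]\cap[t_{y,x}])$ giving a $\kappa$-cover; the degenerate case $V=X$; and padding of unused indices (the paper reserves a separate block $R$ for this, while your leftover indices $\bigcup_\alpha (I_\alpha\setminus F_\alpha)$ do the same job). The only point to tighten is that $N_x$ is itself basic, being an intersection of fewer than $\kappa$ basic neighbourhoods of $x$ (or you can shrink it to one, as the paper does), so that $V=\bigcup_{x\in A}N_x$ really lies in your restricted family of size at most $\kappa$.
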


\begin{proof}
Let $(\mathcal{U}_\xi : \xi< \kappa) \in \mathcal{O}_\kappa(X^2)^\kappa$. Partition $\kappa$ into pairwise disjoint sets $R$ and $I_\alpha$, $\alpha< \kappa$ such that $|R| = |I_\alpha| = \kappa$ for every $\alpha< \kappa$. Let $\mathcal{B}$ be the standard base of $X$ inherited from $2^\kappa$. Since $\kappa^{<\kappa}=\kappa$, we have $|\mathcal B|\leq\kappa$. Let \[\mathcal{C} = \left\{\bigcup \mathcal{A} : \mathcal{A} \in [\mathcal{B}]^{<\kappa}\right\}.\] Then $|\mathcal{C}| \leq \kappa$.

Fix $\alpha< \kappa$. For each nonempty $F\in [X]^{<\kappa}$, enumerate $F^2$ as $F^2 = \{p_\eta : \eta< \mu\}$, $\mu< \kappa$. Choose pairwise distinct indices $\xi_\eta \in I_\alpha$, $\eta< \mu$, and for each $\eta< \mu$, choose $U_{\xi_\eta} \in \mathcal{U}_{\xi_\eta}$ such that $p_\eta \in U_{\xi_\eta}$. Put $G = \bigcup_{\eta< \mu} U_{\xi_\eta}$. Then $G$ is an open subset of $X^2$ containing $F^2$. For each $(x,y)\in F^2$, choose $B_{x,y}, C_{x,y} \in \mathcal{B}$ such that $x\in B_{x,y}$, $y\in C_{x,y}$, and $B_{x,y}\times C_{x,y} \subseteq G$. For $x\in F$, put $N_x = \bigcap_{y\in F} \bigl(B_{x,y} \cap C_{y,x}\bigr)$. Since $|F|< \kappa$ and $2^\kappa$ is a $P_\kappa$-space, $N_x$ is open in $X$. Moreover, we may choose $N_x\in \mathcal{B}$. Set $W = \bigcup_{x\in F} N_x$. Then $W\in \mathcal{C}$, $F\subseteq W$, and $W^2\subseteq G$.

If, for some $\alpha< \kappa$ and some $F\in [X]^{<\kappa}$, we obtain $W = X$, then $X^2 = G = \bigcup_{\eta< \mu} U_{\xi_\eta}$. Thus fewer than $\kappa$ choices from distinct members of $(\mathcal{U}_\xi : \xi< \kappa)$ already cover $X^2$. Choosing one member from every remaining $\mathcal{U}_\xi$, and making the remaining choices pairwise distinct yields immediately a member of $\mathcal{O}_\kappa(X^2)$.

We may therefore assume that all the sets $W$ obtained above are proper
subsets of $X$. For each $\alpha< \kappa$, let $\mathcal{W}_\alpha$ be the family of all proper $W\in \mathcal{C}$ for which there exist $J\in [I_\alpha]^{<\kappa}$ and choices $U_\xi\in \mathcal{U}_\xi$, $\xi\in J$ such that $W^2 \subseteq \bigcup_{\xi\in J} U_\xi$. Clearly, $\mathcal{W}_\alpha \in \Omega_\kappa(X)$, $\alpha< \kappa$. There are $W_\alpha \in \mathcal{W}_\alpha$, $\alpha< \kappa$ such that $\{W_\alpha : \alpha< \kappa\}\in \Omega_\kappa(X)$. For every $\alpha< \kappa$, choose $J_\alpha \in [I_\alpha]^{<\kappa}$ and $U_\xi^{(\alpha)} \in \mathcal{U}_\xi$, $\xi\in J_\alpha$ such that $W_\alpha^2 \subseteq \bigcup_{\xi\in J_\alpha} U_\xi^{(\alpha)}$. We claim that $\bigcup_{\alpha< \kappa} \{U_\xi^{(\alpha)} : \xi\in J_\alpha\}$ covers $X^2$. Indeed, let $(x,y)\in X^2$. There is $\alpha< \kappa$ such that $(x,y)\in W_\alpha^2 \subseteq \bigcup_{\xi\in J_\alpha} U_\xi^{(\alpha)}$. Since the sets $I_\alpha$ are pairwise disjoint, the above choices come from distinct members of $(\mathcal{U}_\xi : \xi< \kappa)$. Finally, choose one member from each remaining $\mathcal U_\xi$. Using the unused set $R$, the additional choices may be made pairwise distinct, and therefore the resulting selected family has size $\kappa$. It is an open cover
of $X^2$. Thus $X^2$ satisfies $\S1k(\mathcal{O}_\kappa, \mathcal{O}_\kappa)$.
\end{proof}

Let $\kappa< \lambda \leq 2^\kappa$. A set $L\subseteq 2^\kappa$ is said to be a $\lambda \mbox{-} \kappa$-Lusin set \cite{kw20} if $|L|\geq \lambda$ and for every $\kappa$-meager set $M\subseteq 2^\kappa$, $|M\cap L|< \lambda$. A set $A\subseteq 2^\kappa$ is said to be $\kappa$-strongly measure zero (or $\kappa$-strongly null) \cite{hs01} (see also \cite{kw20}) if for every $(\xi_\alpha : \alpha< \kappa)\in \kappa^\kappa$, there exists $(x_\alpha : \alpha< \kappa)$ such that $x_\alpha \in 2^{\xi_\alpha}$, $\alpha< \kappa$ and $A\subseteq \bigcup_{\alpha< \kappa} [x_\alpha]$, where $[x_\alpha] = \{f\in 2^\kappa : f\restriction \dom(x_\alpha) = x_\alpha\}$.

\begin{Th}
\label{thm12}
Assume $\kappa^{<\kappa} = \kappa$ and GCH at $\kappa$. Then there exists a $\kappa^+$-$\kappa$-Lusin set $L\subseteq 2^\kappa$ such that $L$ is $\kappa$-Rothberger, but does not satisfy $\S1k(\Omega_\kappa, \Omega_\kappa)$.
\end{Th}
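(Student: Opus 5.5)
Following the classical Lusin-set construction, I would build $L$ by a recursion of length $\kappa^+$ that makes $L$ $\kappa$-Rothberger and also ensures $L\cap\sigma[L]$ is small for a suitable homeomorphism $\sigma$. This is what will destroy $\S1k(\Omega_\kappa,\Omega_\kappa)$ via Lemma~\ref{lemma15}.

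\emph{Setup.} Since $\kappa^{<\kappa}=\kappa$, $2^\kappa$ has a base of size $\kappa$. Every $\kappa$-meager set is therefore contained in a $\kappa$-meager $F_\kappa$-type set coded by a function in $\kappa^\kappa$ (or $2^\kappa$). Hence there are only $2^\kappa=\kappa^+$ such Borel $\kappa$-meager sets; enumerate them as $(M_\beta:\beta<\kappa^+)$. Let $\sigma:2^\kappa\to 2^\kappa$ be the coordinatewise flip $x\mapsto 1-x$. It is a homeomorphism preserving $\kappa$-meagerness. Also fix a coordinatewise "addition" $\oplus$ on $2^\kappa$ (mod $2$) with $x\oplus\sigma(x)=\mathbf 1$. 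Recursively choose $x_\beta\notin \bigcup_{\gamma<\beta}(M_\gamma\cup\sigma[M_\gamma])\cup\{x_\gamma,\sigma(x_\gamma):\gamma<\beta\}$. This is possible by the $\kappa$-Baire category theorem in $2^\kappa$ (valid since $\kappa^{<\kappa}=\kappa$), which gives $\cov(\mathcal M_\kappa)>\kappa$, and since $|\beta|\le\kappa$ we only avoid $\kappa$ meager sets. Put $L=\{x_\beta:\beta<\kappa^+\}$. Each $M_\gamma$ meets $L$ in at most $\{x_\beta:\beta\le\gamma\}$, a set of size $\le\kappa<\kappa^+$. So $L$ is a $\kappa^+$-$\kappa$-Lusin set. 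Moreover $L\cap\sigma[L]=\emptyset$ by construction.

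\emph{Rothberger.} To show $L$ is $\kappa$-Rothberger, I would use the standard argument directly rather than Lemma~\ref{lemma10}. Given $(\mathcal U_\alpha:\alpha<\kappa)\in\mathcal O_\kappa(L)^\kappa$, split $\kappa=\bigcup_{\alpha<\kappa}I_\alpha\cup R$. First handle a dense set $D=\{d_\eta:\eta<\kappa\}$ of basic points: use the indices in a set $I_0$ of size $\kappa$ to pick $U$'s covering neighborhoods of the relevant points. Concretely, extend the cover members to open subsets of $2^\kappa$. Then choose, for each $\alpha\in I_0$ indexed by a countable-to-$\kappa$ enumeration of basic open sets $[s]$, a member meeting $L$ densely, so that the union $G$ of the selected sets is open dense in the closure sense. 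The set $2^\kappa\setminus G$ is then nowhere dense, hence meets $L$ in at most $\kappa$ points. Those $\kappa$ points are then covered one by one using the remaining indices. Finally, the unused indices in $R$ provide pairwise distinct extra choices, so the selected family has size $\kappa$. The delicate point is making $G$ dense: I would choose the selections by letting, for each basic $[s]$ with $[s]\cap L\ne\emptyset$, the index $\alpha_s$ pick a member containing some point of $L\cap[s]$. Then $G$ is open and meets every basic set meeting $L$; since $L$ is dense in its closure, this suffices.

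\emph{Failure of $\S1k(\Omega_\kappa,\Omega_\kappa)$.} Suppose $L$ satisfied it. By Lemma~\ref{lemma15}, $L^2$ would be $\kappa$-Rothberger. The map $(x,y)\mapsto x\oplus y$ is continuous. Continuous images of $\kappa$-Rothberger spaces are $\kappa$-Rothberger, modulo the size-$\kappa$ bookkeeping, handled as in the proofs above. Hence $L\oplus L$ would be $\kappa$-Rothberger. The main obstacle is deriving a contradiction: I would show $L\oplus L=2^\kappa$, or at least that it contains a $\kappa$-Rothberger-failing set such as a copy of $2^\kappa$ itself (Theorem~\ref{thm10}-type argument; for general $\kappa$ directly: $2^\kappa$ is not $\kappa$-Rothberger via the covers $\{[s]:s\in 2^{\{\alpha\}}\}$-style diagonalization of the proof of Theorem~\ref{thm10}). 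To get $z\in L\oplus L$ for every $z$, strengthen the recursion. Since $z\oplus M$ is meager for meager $M$, the Lusin property gives $|L\cap(z\oplus L)|$... a counting argument shows both $L$ and $z\oplus L$ are non-meager on every basic set. To guarantee intersection, interleave in the recursion the enumeration of all $z\in 2^\kappa$ ($\kappa^+$ many under GCH). At stage $\beta$, handling $z_\beta$, add a pair $x,\,z_\beta\oplus x$, both chosen outside the $\le\kappa$ meager sets to avoid; this is possible since the union of $\kappa$ meager sets together with its $z_\beta$-translate is still meager. Then $z_\beta\in L\oplus L$ for all $\beta$, so $L\oplus L=2^\kappa$. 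This is not $\kappa$-Rothberger, which gives the contradiction. (With this modification I would drop the $\sigma$-disjointness requirement, which is no longer needed.)
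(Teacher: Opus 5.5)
Your proof is correct, but it takes a different route from the paper's.

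\textbf{The paper's route.} Apart from Lemma~\ref{lemma15}, the paper treats everything as a black box from \cite{kw20}. Proposition~8 there supplies, under GCH at $\kappa$, a $\kappa^+$-$\kappa$-Lusin set $L$ whose square, transported to $2^\kappa$ by the canonical homeomorphism $\Phi$, is not $\kappa$-strongly measure zero. Corollary~46 gives that $L$ is $\kappa$-Rothberger. Proposition~43 says that $\kappa$-Rothberger subsets of $2^\kappa$ are $\kappa$-strongly measure zero, so $L^2$ cannot be $\kappa$-Rothberger.

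\textbf{Your route.} You replace all three citations with direct arguments.
\begin{enumerate}
\item You build the Lusin set by hand with the stronger property $L\oplus L=2^\kappa$. You interleave a $\kappa^+$-enumeration of $2^\kappa$ and choose $x\notin A\cup(z_\beta\oplus A)$.
\item You prove Lusin $\Rightarrow$ $\kappa$-Rothberger directly. Because $G$ meets every basic set that meets $L$, the set $\overline{L}\setminus G$ is closed and nowhere dense in $2^\kappa$. It therefore contains at most $\kappa$ points of $L$, which you cover one index at a time.
\item You replace strong measure zero by the fact that $2^\kappa$, a continuous image of $L^2$, is not $\kappa$-Rothberger.
\end{enumerate}
This is the classical countable argument (a Lusin set whose sumset is the whole space) lifted to $\kappa$. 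It is self-contained and avoids $\kappa$-strong measure zero entirely. The price is redoing the construction instead of quoting \cite{kw20}. The paper's version is shorter and reuses an existing example. Your modified recursion still yields a Lusin set, so the Rothberger argument applies to the final $L$ unchanged.

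\textbf{Minor corrections.}
\begin{itemize}
\item The covers $\{[s]: s\in 2^{\{\alpha\}}\}$ have size $2$, so they are not in $\mathcal{O}_\kappa$. To show $2^\kappa$ is not $\kappa$-Rothberger, use the $\gamma_\kappa$-covers $\{2^\kappa\setminus B^{(\alpha)}_\xi : \xi<\kappa\}$ from the proof of Theorem~\ref{thm10}. That half of the proof does not use weak compactness.
\item Continuous images need no size bookkeeping. For a surjection $f$, the map $V\mapsto f^{-1}[V]$ is injective, so covers of size $\kappa$ pull back to covers of size $\kappa$, and selected families keep size $\kappa$.
\item The preliminary $\sigma$-construction, the remarks about a dense set $D$, and the unfinished sentence about $|L\cap(z\oplus L)|$ can simply be deleted.
\end{itemize}
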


\begin{proof}
Assume GCH at $\kappa$. By \cite[Proposition~8]{kw20}, there exists a $\kappa^+$-$\kappa$-Lusin set $L\subseteq 2^\kappa$ such that under the canonical identification $\Phi: 2^\kappa \times 2^\kappa \to 2^\kappa$, we have $\Phi[L^2]$ is not $\kappa$-strongly measure zero. By \cite[Corollary~46]{kw20}, $L$ is $\kappa$-Rothberger.

Suppose, towards a contradiction, that $L$ satisfies $\S1k(\Omega_\kappa, \Omega_\kappa)$. By Lemma~\ref{lemma15}, $L^2$ is $\kappa$-Rothberger, and hence $\Phi[L^2] \subseteq 2^\kappa$ is also $\kappa$-Rothberger. Therefore, by
\cite[Proposition~43]{kw20}, $\Phi[L^2]$ is $\kappa$-strongly measure zero, which contradicts the choice of $L$. Therefore, $L$ does not satisfy $\S1k(\Omega_\kappa, \Omega_\kappa)$.
\end{proof}

\subsection{The diagram and open problems}
\label{sec-sdu}
Thus if $\kappa$ is a strongly inaccessible cardinal, then for every $P_\kappa$-space, the results discussed above yield the Scheepers diagram
in the uncountable setting (Figure~\ref{sdu}). In Figure~\ref{sdu},
a question mark above a horizontal arrow indicates that it is unknown whether
the corresponding implication holds, whereas a question mark below a horizontal
arrow, or to the right of a vertical arrow, indicates that it is unknown whether
the reverse implication holds. The cardinal number displayed below each property
indicates its critical cardinality, while a question mark in place of a cardinal number indicates that the critical cardinality is unknown.

\begin{figure}[h]
\begin{adjustbox}{max width = \textwidth, max height = \textheight, keepaspectratio, center}
\begin{tikzcd}[column sep=6ex, row sep=4ex, arrows = {crossing over}]
& \sr{$\Uk(\mathcal{O}_\kappa, \Gamma_\kappa)$}{$\bk$}{} \arrow[r, "\textbf{?}"] \arrow[rr, bend left=20] & \sr{$\Sk(\Gamma_\kappa, \Omega_\kappa)$}{$\dlk$}{} \arrow[r, "\textbf{?}", swap] & \sr{$\Uk(\mathcal{O}_\kappa, \Omega_\kappa)$}{$\dlk$}{} \arrow[r] & \sr{$\Sk(\mathcal{O}_\kappa, \mathcal{O}_\kappa)$}{$\dk$}{}
\\
\sr{$\S1k(\Gamma_\kappa, \Gamma_\kappa)$}{$\bk$}{} \arrow[r] \arrow[ru] & \sr{$\S1k(\Gamma_\kappa, \Omega_\kappa)$}{$\dlk$}{} \arrow[rr] \arrow[ru] && \sr{$\S1k(\Gamma_\kappa, \mathcal{O}_\kappa)$}{$\dk$}{} \arrow[ru] &
\\
&& \sr{$\Sk(\Omega_\kappa, \Omega_\kappa)$}{{\bf ?}}{} \arrow[uu, "\textbf{?}", swap, sloped, pos=0.75] &&
\\
\sr{$\S1k(\Omega_\kappa, \Gamma_\kappa)$}{$\pk$}{} \arrow[r] \arrow[uu] & \sr{$\S1k(\Omega_\kappa, \Omega_\kappa)$}{{\bf ?}}{} \arrow[rr] \arrow[ru] \arrow[uu] && \sr{$\S1k(\mathcal{O}_\kappa, \mathcal{O}_\kappa)$}{$\ink$}{} \arrow[uu] &
\end{tikzcd}
\end{adjustbox}
\caption{Scheepers diagram in the uncountable setting}
\label{sdu}
\end{figure}

The unresolved questions indicated by the question marks in Figure~\ref{sdu} are formulated as the following open problems.

\begin{Prob}
\label{prob1}
Does $\Uk(\mathcal{O}_\kappa, \Gamma_\kappa)$ imply $\Sk(\Gamma_\kappa, \Omega_\kappa)$?
\end{Prob}

\begin{Prob}
\label{prob3}
Does $\Uk(\mathcal{O}_\kappa, \Omega_\kappa)$ imply $\Sk(\Gamma_\kappa, \Omega_\kappa)$?
\end{Prob}

\begin{Prob}
\label{prob4}
Does $\Sk(\Gamma_\kappa, \Omega_\kappa)$ imply $\Sk(\Omega_\kappa, \Omega_\kappa)$?
\end{Prob}

\begin{Prob}
\label{prob5}
$\non(\Sk(\Omega_\kappa, \Omega_\kappa)) =~?$
\end{Prob}

\begin{Prob}
\label{prob6}
$\non(\S1k(\Omega_\kappa, \Omega_\kappa)) =~?$
\end{Prob}

\section{$\kappa$-Hurewicz Conjecture}
\label{sec-hc}
We begin this section with the following basic implication diagram for arbitrary spaces (Figure~\ref{bd}), which follows from the results discussed in Section~\ref{sec-ie}.

\begin{figure}[h]
\begin{adjustbox}{max width = \textwidth, max height = \textheight, keepaspectratio, center}
\begin{tikzcd}[column sep=4ex, row sep=4ex]
\text{$K_\kappa$} \arrow[r, "\text{$\kappa$ is regular}"] & [1.2cm] \text{$\kappa$-Hurewicz} \arrow[r, "\text{$\kappa$ is regular}"] & [1.2cm] \text{$\kappa$-Scheepers} \arrow[r] & \text{$\kappa$-Menger} & \text{$\kappa$-Rothberger} \arrow[l]
\end{tikzcd}
\end{adjustbox}
\caption{Basic implication diagram}
\label{bd}
\end{figure}

Here we discuss the reverse implication of $K_\kappa \rightarrow \kappa\text{-Hurewicz}$, since the reverse implications corresponding to the other arrows have already been discussed in Section~\ref{sec-ni}. The following example shows that the reverse implication of $K_\kappa \rightarrow \kappa\text{-Hurewicz}$ fails for Tychonoff spaces.

\begin{Ex}
\label{ex1}
Assume that $\kappa$ is regular. Then there exists a Tychonoff $\kappa$-Hurewicz space which is not $K_\kappa$.
\end{Ex}

\begin{proof}
Let $L = [0,\kappa^+)$ be endowed with the order topology and $D = \{d_\xi : \xi< \kappa\}$ be a discrete space. Consider the topological sum $X = L\oplus D$. We first show that $X$ is $\kappa$-Hurewicz. We shall use the following observation.
\smallskip

\noindent\textbf{Claim.} Every open cover of $L$ of size at most $\kappa$ has a finite subcover.
\smallskip

Let $\mathcal U$ be an open cover of $L$ with $|\mathcal{U}| \leq \kappa$. For each $\beta< \kappa^+$, the ordinal interval $[0,\beta]$ is compact. Hence there exists $\mathcal{F}_\beta \in [\mathcal{U}]^{<\omega}$ such that $[0,\beta] \subseteq \bigcup \mathcal{F}_\beta$. For each $\mathcal{F} \in [\mathcal{U}]^{<\omega}$, put $B_{\mathcal{F}} = \{\beta< \kappa^+ : [0,\beta] \subseteq \bigcup \mathcal{F}\}$. Then \[\kappa^+ = \bigcup_{\mathcal{F}\in [\mathcal{U}]^{<\omega}} B_{\mathcal{F}}.\] Since $|[\mathcal U]^{<\omega}| \leq \kappa$ and $\cf(\kappa^+) = \kappa^+ > \kappa$, at least one of the sets $B_{\mathcal{F}}$ is cofinal in $\kappa^+$. Fix such a family $\mathcal{F} \in [\mathcal{U}]^{<\omega}$. Let $\gamma < \kappa^+$. Then there exists $\beta \in B_{\mathcal{F}}$ such that $\gamma \leq \beta$, and so $\gamma \in [0,\beta] \subseteq \bigcup \mathcal{F}$. Thus $L \subseteq \bigcup \mathcal{F}$, which proves the claim.
\smallskip

Now let $(\mathcal{U}_\alpha : \alpha < \kappa)\in \mathcal{O}_\kappa(X)^\kappa$ be such that no $\mathcal{U}_\alpha$ has a subcover of size less than $\kappa$. For each $\alpha< \kappa$, the family $\mathcal{U}_\alpha \restriction L = \{U\cap L : U\in \mathcal{U}_\alpha\}$ is an open cover of $L$ of size at most $\kappa$. By the claim, there exists $\mathcal{F}_\alpha \in [\mathcal{U}_\alpha]^{<\omega}$ such that $L\subseteq \bigcup \mathcal{F}_\alpha$. For every $\xi\leq \alpha$, choose $U_\xi^{(\alpha)} \in \mathcal{U}_\alpha$ such that $d_\xi \in U_\xi^{(\alpha)}$, and put $\mathcal{V}_\alpha = \mathcal{F}_\alpha \cup \{U_\xi^{(\alpha)} : \xi\leq \alpha\}$. Since $\alpha< \kappa$, we have $\mathcal{V}_\alpha \in [\mathcal{U}_\alpha]^{<\kappa}$. Set $V_\alpha = \bigcup \mathcal{V}_\alpha$ for every $\alpha< \kappa$. Then $\{V_\alpha : \alpha< \kappa\}\in \Gamma_\kappa(X)$. Therefore, $X$ is $\kappa$-Hurewicz.

We next show that $X$ is not $K_\kappa$. We first observe that no $\kappa$-compact subspace of $L$ is confinal in $\kappa^+$. Suppose that $A\subseteq L$ is confinal in $\kappa^+$. Consider $\mathcal{W} = \{A\cap [0,\beta) : \beta< \kappa^+\}$. Then $\mathcal{W}$ is an open cover of $A$. Let $\mathcal{W}_0 \in [\mathcal{W}]^{<\kappa}$. There is a set $B\in [\kappa^+]^{<\kappa}$ such that $\mathcal{W}_0 = \{A\cap [0,\beta) : \beta\in B\}$. Put $\gamma = \sup B$. Since $|B|< \kappa< \kappa^+ = \cf(\kappa^+)$, we have $\gamma< \kappa^+$. Therefore, $\bigcup \mathcal{W}_0 = A\cap [0,\gamma)$. Since $A$ is cofinal in $\kappa^+$ and $\gamma< \kappa^+$, there exists $a\in A$ such that $a\geq \gamma$, and hence $a\notin \bigcup \mathcal{W}_0$. Therefore, $\mathcal{W}_0$ does not cover $A$. Thus $\mathcal{W}$ has no subcover of size less than $\kappa$, and hence $A$ is not $\kappa$-compact.

Suppose, towards a contradiction, that $X$ is $K_\kappa$. Then $X = \bigcup_{\xi< \kappa} K_\xi$, where each $K_\xi$ is $\kappa$-compact. For each $\xi< \kappa$, there exists $\beta_\xi < \kappa^+$ such that $K_\xi \cap L \subseteq [0,\beta_\xi)$. It follows that \[L = \bigcup_{\xi< \kappa} (K_\xi \cap L) \subseteq [0, \sup_{\xi< \kappa} \beta_\xi).\] Since $\cf(\kappa^+) = \kappa^+ > \kappa$, we have $\beta = \sup_{\xi< \kappa} \beta_\xi < \kappa^+$. Consequently, $L \subseteq [0,\beta)$, which is a contradiction. Therefore, $X$ is not $K_\kappa$.
\end{proof}

In Example~\ref{ex1}, we have already observed that for a regular cardinal $\kappa$, a $\kappa$-Hurewicz space need not be $K_\kappa$, even in the class of Tychonoff spaces. However, for subspaces of $P(\kappa)$ and under the stronger assumption that $\kappa$ is weakly compact, the situation is less clear. In \cite{ucsp}, the authors formulated the following uncountable analogue of Hurewicz's Conjecture.

\begin{Conj}[$\kappa$-Hurewicz Conjecture]
Assume that $\kappa$ is weakly compact. Every $\kappa$-Hurewicz subspace of $P(\kappa)$ is a $K_\kappa$-space.
\end{Conj}

They also posed the following consistency problem concerning this conjecture.

\begin{Prob}[{\cite[Problem~8.1]{ucsp}}]
\label{prob7}
Is the $\kappa$-Hurewicz Conjecture consistent with ZFC + $\kappa$ is weakly compact?
\end{Prob}

Problem~\ref{prob7} asks whether there is a model of ZFC + $\kappa$ is weakly compact in which the $\kappa$-Hurewicz Conjecture holds; that is, in which every $\kappa$-Hurewicz subspace of $P(\kappa)$ is a $K_\kappa$-space. We show that if $\kappa$ is weakly compact and $2^{\bk} > 2^\kappa$, then there exists a $\kappa$-Hurewicz subspace of $P(\kappa)$ which is not $K_\kappa$; see Theorem~\ref{thm1}. Since the inequality $2^{\bk} > 2^\kappa$ is consistent with ZFC + $\kappa$ is weakly compact, it follows that there is a model of ZFC + $\kappa$ is weakly compact in which the $\kappa$-Hurewicz Conjecture fails. The proof of Theorem~\ref{thm1} relies on a few lemmas, which we establish first.

\begin{Def}[{\cite{ucsp}}]
\label{def1}
A mapping $f\in \kappa^\kappa$ is said to be
\begin{enumerate}[wide=0pt, label={\upshape(\arabic*)}, leftmargin=*]
\item strictly increasing if for all $\alpha< \beta< \kappa$, we have $f(\alpha)< f(\beta)$;

\item continuous if for any limit ordinal $\alpha< \kappa$, we have $f(\alpha) = \sup_{\beta< \alpha} f(\beta)$;

\item normal if it is strictly increasing and continuous.
\end{enumerate}
\end{Def}

Also recall from \cite{ucsp} that if $f \in \kappa^\kappa$ is normal, then \[\bigcup_{\alpha< \kappa} [f(\alpha), f(\alpha+1)) = [f(0), \kappa).\]

\begin{Lemma}[{\cite[Lemma~2.3]{ucsp}}]
\label{lemma1}
Assume that $\kappa$ is strongly inaccessible. Let $\fink \subseteq X\subseteq P(\kappa)$. If $\mathcal{U}$ is a $\kappa$-cover of $X$ by open sets in $P(\kappa)$, then there are a normal function $a\in \kappa^\kappa$ and a sequence $(U_\alpha : \alpha< \kappa)$ of pairwise distinct sets from $\mathcal{U}$ such that for every $x\in P(\kappa)$ and every $\alpha< \kappa$, $x \in U_\alpha$ whenever $x\cap [a(\alpha), a(\alpha+1)) = \emptyset$.
\end{Lemma}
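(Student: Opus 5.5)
The statement is Lemma~\ref{lemma1}. I would prove it by a recursive construction of length $\kappa$, combining the $\kappa$-cover property of $\mathcal{U}$ with weak-compactness-type closure supplied by strong inaccessibility. The key observation concerns the sets $\fink\cap P(\beta)=P(\beta)$ for $\beta<\kappa$. By strong inaccessibility, $|P(\beta)|=2^{|\beta|}<\kappa$, and $P(\beta)\subseteq \fink\subseteq X$. Since $\mathcal{U}$ is a $\kappa$-cover of $X$, Fact~\ref{fact1} gives $\kappa$ many $U\in\mathcal{U}$ with $P(\beta)\subseteq U$.

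Next I would show that such a $U$ is insensitive to coordinates beyond some bound. Fix $U\in\mathcal{U}$ open in $P(\kappa)$ with $P(\beta)\subseteq U$. For each $s\in P(\beta)$ there is a basic neighbourhood of $s$ (viewed as a characteristic function) inside $U$. This neighbourhood is determined by $s$ together with a set $D_s\in[\kappa]^{<\kappa}$ of coordinates. Let $\gamma_s=\sup D_s+1<\kappa$, using regularity. Put $\gamma=\sup_{s\in P(\beta)}\gamma_s$; this is $<\kappa$ since $|P(\beta)|<\kappa$ and $\kappa$ is regular. We may take $\gamma\geq\beta$.

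Then for any $x\in P(\kappa)$ with $x\cap[\beta,\gamma)=\emptyset$, let $s=x\cap\beta\in P(\beta)$. On $D_s\subseteq\gamma$ the functions $x$ and $s$ agree: below $\beta$ by definition of $s$, and on $[\beta,\gamma)$ both are $0$. Hence $x$ lies in the basic neighbourhood of $s$, so $x\in U$. This is the central step: each such $U$ contains every $x$ that avoids a bounded interval $[\beta,\gamma)$.

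The construction of $a$ and $(U_\alpha)$ then goes by recursion on $\alpha<\kappa$.
\begin{itemize}
\item Set $a(0)=0$.
\item At limit stages set $a(\alpha)=\sup_{\beta<\alpha}a(\beta)$, which is $<\kappa$ by regularity. This makes $a$ continuous.
\item At stage $\alpha$, with $a(\alpha)$ defined, choose $U_\alpha\in\mathcal{U}$ with $P(a(\alpha))\subseteq U_\alpha$ and $U_\alpha\notin\{U_\eta:\eta<\alpha\}$. This is possible because there are $\kappa$ candidates and fewer than $\kappa$ have been used.
\item Apply the central step with $\beta=a(\alpha)$ to get $\gamma$, and set $a(\alpha+1)=\max(\gamma,a(\alpha)+1)$. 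This makes $a$ strictly increasing.
\end{itemize}
The resulting $a$ is normal, the $U_\alpha$ are pairwise distinct, and the required property holds by the central step.

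The main obstacle is the central step: making sure the uniform bound $\gamma$ exists. This needs both $|P(\beta)|<\kappa$ (strong inaccessibility) and $P(\beta)\subseteq X$ (the hypothesis $\fink\subseteq X$), so that $U$ is forced to contain all of $P(\beta)$. Everything else is routine bookkeeping with regularity.
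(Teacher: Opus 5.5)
Your proof is correct and complete. You choose a fresh $U_\alpha\supseteq P(a(\alpha))$ via Fact~\ref{fact1}, let $a(\alpha+1)$ exceed the supports of the fewer than $\kappa$ basic neighbourhoods witnessing this inclusion, and take suprema at limits; this is the natural $\kappa$-version of the classical argument for $\omega$-covers of $[\mathbb{N}]^{<\infty}$, and since the paper gives no proof of its own (it only cites \cite[Lemma~2.3]{ucsp}), there is no different route in the paper to compare with.

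One point should be stated explicitly. For a cover by open subsets of $P(\kappa)$, the condition $X\notin\mathcal{U}$ must be read as ``no member of $\mathcal{U}$ contains $X$''. Under that reading Fact~\ref{fact1} applies, and it is what you need to make the $U_\alpha$ pairwise distinct.
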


We now observe that the proof of \cite[Theorem~3.6]{ucsp} yields a slightly stronger statement.

\begin{Lemma}
\label{lemma5}
Assume that $\kappa$ is strongly inaccessible. Let $X = \{x_\alpha : \alpha< \bk\}$ be a $\bk$-scale. Then for every $Y\subseteq X$, the space $Y\cup \fink$ is $\kappa$-Hurewicz.
\end{Lemma}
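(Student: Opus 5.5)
We follow the proof of \cite[Theorem~3.6]{ucsp}, showing that a subset $Y$ of the scale may replace the whole scale. Fix $Y\subseteq X$ and put $Z = Y\cup \fink$. Let $(\mathcal{U}_\alpha : \alpha<\kappa)\in \mathcal{O}_\kappa(Z)^\kappa$ with no $\mathcal{U}_\alpha$ having a subcover of size less than $\kappa$. We may assume the members of each $\mathcal{U}_\alpha$ are open in $P(\kappa)$.

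\emph{Step 1: reduce to $\kappa$-covers.} Since the conclusion only concerns $\bigcup\mathcal{V}_\alpha$ for $\mathcal{V}_\alpha\in[\mathcal{U}_\alpha]^{<\kappa}$, replace $\mathcal{U}_\alpha$ by the family $\mathcal{U}'_\alpha$ of unions of fewer than $\kappa$ members of $\mathcal{U}_\alpha$. Because $\kappa$ is strongly inaccessible, $|\mathcal{U}'_\alpha|=\kappa$. It is a $\kappa$-cover of $Z$, and $Z\notin\mathcal{U}'_\alpha$ by the no-small-subcover hypothesis.

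\emph{Step 2: apply Lemma~\ref{lemma1}.} Since $\fink\subseteq Z$, Lemma~\ref{lemma1} gives, for each $\alpha$, a normal $a_\alpha\in\kappa^\kappa$ and distinct $U^{(\alpha)}_\beta\in\mathcal{U}'_\alpha$ such that $x\in U^{(\alpha)}_\beta$ whenever $x\cap[a_\alpha(\beta),a_\alpha(\beta+1))=\emptyset$. Using regularity and strong inaccessibility, diagonalize to a single increasing $b\in[\kappa]^\kappa$: every interval $[b(\gamma),b(\gamma+1))$ with $\gamma\geq\alpha$ contains an interval $[a_\alpha(\beta),a_\alpha(\beta+1))$ for some $\beta$. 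To get this, let $b(\gamma+1)$ exceed $a_\alpha(\beta+1)$ for a suitable $\beta$ for each of the fewer than $\kappa$ indices $\alpha\leq\gamma$.

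\emph{Step 3: use the scale (the main step).} Now one uses that $X$ is a $\bk$-scale that is unbounded, so not dominated by the function $g$ built from $b$. In \cite[Theorem~3.6]{ucsp} this gives an index $\delta<\bk$ and a choice $\mathcal{V}_\alpha$ (a single $U^{(\alpha)}_{\beta}$ expanded back into fewer than $\kappa$ members of $\mathcal{U}_\alpha$) with two properties. First, for every $x_\xi$ with $\xi\geq\delta$, $x_\xi$ misses the relevant intervals for all but fewer than $\kappa$ many $\alpha$; this uses that $x_\delta\not\leq^*_\kappa g$ and $x_\delta\leq^*_\kappa x_\xi$, via the enumeration/interval correspondence. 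Second, the initial segment $\{x_\xi:\xi<\delta\}$ has size less than $\bk$, so it is bounded by some $h$; folding $h$ into $b$ beforehand handles these points uniformly as well. Elements of $\fink$ are bounded subsets, so they avoid all late intervals and lie in all but fewer than $\kappa$ of the sets $\bigcup\mathcal{V}_\alpha$.

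The key observation is that every verification is pointwise: for each $x\in X$, it shows that $\{\alpha: x\notin\bigcup\mathcal{V}_\alpha\}$ has size less than $\kappa$, using only where $x$ sits in the scale. No step uses that the points of the scale actually belong to the space being covered. The covers are open in $P(\kappa)$, and Lemma~\ref{lemma1} only needs $\fink$ inside the space. Hence the same selection works verbatim for $Z$. Checking this is the main thing to do carefully. It remains to check that $\{\bigcup\mathcal{V}_\alpha\}$ has size $\kappa$. This follows as in Theorem~\ref{thm7}: each member is a proper subset of $Z$, so no single set can repeat $\kappa$ many times. Therefore the family is a $\gamma_\kappa$-cover of $Z$, and $Z$ is $\kappa$-Hurewicz.
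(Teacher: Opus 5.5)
There is a genuine gap in how you treat the initial segment of the scale. The rest of your route matches the paper's. You reduce to $\kappa$-covers, where the paper invokes Theorem~\ref{thm5} instead of your Step~1, and you apply Lemma~\ref{lemma1}. You use unboundedness of the scale to fix an index $\delta$ and a set $I\in[\kappa]^\kappa$ of coordinates where $x_\delta$ beats the function built from the $a_\xi$'s. You treat the tail $\{x_\xi:\xi\ge\delta\}$ by the interval-missing mechanism and the fewer than $\mathfrak{b}_\kappa$ earlier points separately. The tail argument is indeed pointwise and valid for every $x\in P(\kappa)$, so it transfers to $Y$ as you say. One caveat: non-domination controls only the coordinates in $I$. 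You must either set $\mathcal{V}_\xi=\emptyset$ off $I$, as the paper does, or spread the selection via $\min(I\setminus\xi)$, as in Theorem~\ref{thm7}. Your assertion that $x\notin\bigcup\mathcal{V}_\alpha$ for fewer than $\kappa$ many $\alpha<\kappa$ is not what the construction gives.

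The gap is the sentence ``bounded by some $h$; folding $h$ into $b$ beforehand''.
\begin{itemize}
\item It is circular. $\delta$ is chosen from $g$, which is built from $b$, so $h$, which depends on $\delta$, cannot be put into $b$ before $\delta$ exists.
\item More fundamentally, a $\leq^*_\kappa$-bound on the increasing enumerations $x_\xi$, $\xi<\delta$, is trivial ($x_\delta$ itself is one) and points the wrong way. A slowly growing set \emph{meets} intervals, whereas Lemma~\ref{lemma1} puts a point into $U^{(\alpha)}_\beta$ only when it \emph{misses} $[a_\alpha(\beta),a_\alpha(\beta+1))$. No bound on the enumerations can be made to help here.
\end{itemize}

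What is needed is a bound on the cover-index functions, chosen \emph{after} $\delta$ and $I$ are fixed.
\begin{itemize}
\item Enumerate $\mathcal{U}_\xi=\{W^{(\xi)}_\eta:\eta<\kappa\}$.
\item Put $f_x(\xi)=\min\{\eta: x\in W^{(\xi)}_\eta\}$ for $\xi\in I$ and $x\in A=Y\cap\{x_\beta:\beta<\delta\}$.
\item Since $|A|<\mathfrak{b}_\kappa$, bound $\{f_x:x\in A\}$ by a single $g$.
\item Add $\{W^{(\xi)}_\eta:\eta\le g(\xi)\}$ to $\mathcal{V}_\xi$. This is harmless: enlarging $\bigcup\mathcal{V}_\xi$ never lets it contain $Y\cup Q_\kappa$, by the no-small-subcover hypothesis.
\end{itemize}

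This step defines $f_x$ only for points that the $\mathcal{U}_\xi$ actually cover, namely points of $Y\cup Q_\kappa$. So your claim that no step uses membership of scale points in the space is false exactly here. Points of $X\setminus Y$ below $\delta$ need not be covered by the $\mathcal{U}_\xi$ at all. The correct adaptation, which is the paper's, is to restrict the head to $Y$ and handle it directly with the definition of $\mathfrak{b}_\kappa$ as above.
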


\begin{proof}
We modify the argument of \cite[Theorem~3.6]{ucsp}. The tail of the scale is treated as in that proof, while the initial part is handled directly using the definition of $\bk$.

By Theorem~\ref{thm5}, it is enough to show that $Y\cup \fink$ satisfies $\Uk(\Omega_\kappa, \Gamma_\kappa)$. Let $(\mathcal{U}_\xi : \xi< \kappa)$ be a sequence of $\kappa$-covers of $Y\cup \fink$ of size $\kappa$ by open sets in $P(\kappa)$ such that no $\mathcal{U}_\xi$ has a subcover of size less than $\kappa$. For every $\xi< \kappa$, Lemma~\ref{lemma1} provides a normal function $a_\xi \in \kappa^\kappa$ and a sequence $(U_\eta^{(\xi)} : \eta< \kappa)$ of pairwise distinct members of $\mathcal{U}_\xi$ such that for every $x\in P(\kappa)$ and every $\eta< \kappa$, $x \in U_\eta^{(\xi)}$ whenever $x\cap [a_\xi(\eta), a_\xi(\eta+1)) = \emptyset$. Define $c(\xi) = a_\xi(|\xi|^+)$ for all $\xi< \kappa$. Since $X$ is unbounded, there is an $\alpha< \bk$ such that $x_\alpha \not \leq_\kappa^* c$. Consequently, the set $I = \{\xi< \kappa : c(\xi)< x_\alpha(\xi)\}$ is of size $\kappa$. Put $A = Y\cap \{x_\beta : \beta< \alpha\}$. Then $|A|\leq |\alpha|< \bk$. For each $\xi\in I$, fix an enumeration $\mathcal{U}_\xi = \{W_\eta^{(\xi)} : \eta< \kappa\}$. For every $x\in A$, define $f_x\in \kappa^I$ by \[f_x(\xi) = \min \{\eta< \kappa : x\in W_\eta^{(\xi)}\}\] for all $\xi\in I$. Since $|I| = \kappa$, we may identify $I$ with $\kappa$. As the family $\{f_x : x\in A\}$ has size less than $\bk$, it is bounded. Hence there exists $g\in \kappa^I$ such that $f_x \leq_\kappa^* g$ for every $x\in A$.

For each $\xi< \kappa$, define
\[\mathcal{V}_\xi = \begin{cases}
\{W_\eta^{(\xi)} : \eta \leq g(\xi)\} \cup \{U_\eta^{(\xi)} : \eta \leq |\xi|^+\}, & \mbox{if } \xi\in I\\
\emptyset, & \mbox{if } \xi \notin I.
\end{cases}\]
Since $\kappa$ is strongly inaccessible, we have $|\xi|^+ < \kappa$ for every $\xi< \kappa$. Thus $\mathcal{V}_\xi \in [\mathcal{U}_\xi]^{<\kappa}$ for every $\xi< \kappa$. We show that every point of $Y\cup \fink$ belongs to $\bigcup \mathcal{V}_\xi$ for all but less than $\kappa$ many $\xi\in I$. First, let $x\in A$. Since $f_x \leq_\kappa^* g$, we have $f_x(\xi) \leq g(\xi)$ for all but less than $\kappa$ many $\xi\in I$. Hence $x\in W_{f_x(\xi)}^{(\xi)} \subseteq \bigcup \mathcal{V}_\xi$.

Next, let $x\in \fink$. Since $|x|< \kappa$ and $\kappa$ is regular, there exists $\gamma< \kappa$ such that $x\subseteq \gamma$. Also since $a_\xi$ is strictly increasing for every $\xi< \kappa$, we get $x\cap [a_\xi(\xi), a_\xi(\xi+1)) = \emptyset$ for all sufficiently large $\xi\in I$. It follows that $x\in U_\xi^{(\xi)} \subseteq \bigcup \mathcal{V}_\xi$ as $\xi< |\xi|^+$.

Finally, let $x_\beta\in Y$ with $\beta \geq \alpha$. Since $X$ is a $\bk$-scale, $x_\alpha \leq_\kappa^* x_\beta$. Thus for all but less than $\kappa$ many $\xi\in I$, $x_\beta(\xi) \geq x_\alpha(\xi) > c(\xi) = a_\xi(|\xi|^+)$. It follows that $|x_\beta \cap [0, a_\xi(|\xi|^+))| < |\xi|^+$ since $x_\beta$ is identified with the increasing enumeration of its elements. As $a_\xi(0) = 0$, $a_\xi$ is continuous, and $|\xi|^+$ is a limit ordinal, we have $a_\xi(|\xi|^+) = \sup_{\eta< |\xi|^+} a_\xi(\eta)$. Therefore, $[0, a_\xi(|\xi|^+)) = \bigcup_{\eta< |\xi|^+} [a_\xi(\eta), a_\xi(\eta+1))$. The intervals on the right are pairwise disjoint, and there are $|\xi|^+$ many of them. Hence there exists $\eta< |\xi|^+$ such that $x_\beta \cap [a_\xi(\eta), a_\xi(\eta+1)) = \emptyset$. Thus $x_\beta \in U_\eta^{(\xi)} \subseteq \bigcup \mathcal{V}_\xi$.

We have therefore shown that for each $x\in Y\cup \fink$, the set $\{\xi\in I : x\notin \bigcup \mathcal{V}_\xi\}$ has size less than $\kappa$. It follows that $\{V\in \{\bigcup \mathcal{V}_\xi : \xi< \kappa\} : x\notin V\}$ has size less than $\kappa$. It remains to verify that $\{\bigcup \mathcal{V}_\xi : \xi< \kappa\}$ is of size $\kappa$. For every $\xi\in I$, $\bigcup \mathcal{V}_\xi \neq Y\cup \fink$. Suppose, to the contrary, that $\{\bigcup \mathcal{V}_\xi : \xi\in I\}$ has size less than $\kappa$. For each distinct set $V = \bigcup \mathcal{V}_\xi$, $\xi\in I$, choose $x_V\in (Y\cup \fink) \setminus V$. Clearly, $\{\xi\in I : \bigcup \mathcal{V}_\xi = V\} \subseteq \{\xi\in I : x_V \notin \bigcup \mathcal{V}_\xi\}$. It follows that $\{\xi\in I : \bigcup \mathcal{V}_\xi = V\}$ has size less than $\kappa$. Since $\kappa$ is regular, less than $\kappa$ many such sets cannot cover $I$, contradicting $|I| = \kappa$. Hence $\{\bigcup \mathcal{V}_\xi : \xi< \kappa\}$ is of size $\kappa$. Therefore, $\{\bigcup \mathcal{V}_\xi : \xi< \kappa\}$ is $\gamma_\kappa$-cover of $Y\cup \fink$ by open sets in $P(\kappa)$. Thus $Y\cup \fink$ satisfies $\Uk(\Omega_\kappa, \Gamma_\kappa)$.
\end{proof}

Thus a single $\bk$-scale produces a large collection of $\kappa$-Hurewicz spaces. Indeed, since $X\cap \fink = \emptyset$, the sets $Y\cup \fink$ and $Y\subseteq X$ are pairwise distinct. Consequently, there are at least $2^{\bk}$ distinct $\kappa$-Hurewicz subspaces of $P(\kappa)$ of this form. We next estimate the number of $K_\kappa$-subspaces of $P(\kappa)$ when $\kappa$ is weakly compact.

\begin{Lemma}
\label{lemma6}
If $\kappa$ is weakly compact, then the number of $K_\kappa$-subspaces of
$P(\kappa)$ is at most $2^\kappa$.
\end{Lemma}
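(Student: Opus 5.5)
The plan is to show that every $K_\kappa$-subspace of $P(\kappa)$ is determined by at most $\kappa$ many closed subsets of $2^\kappa$, and that $2^\kappa$ has at most $2^\kappa$ closed sets. The argument then reduces to counting.

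First I will count the closed subsets of $P(\kappa)\cong 2^\kappa$. Since $\kappa$ is weakly compact, it is strongly inaccessible, so $\kappa^{<\kappa}=\kappa$. As recalled in Section~\ref{sec-cb}, the space $2^\kappa$ then has a base $\mathcal{B}$ of size $\kappa$, namely the basic clopen sets $[s]$ with $s\in 2^{<\kappa}$. Every open set is a union of members of $\mathcal{B}$, so there are at most $2^{|\mathcal{B}|}=2^\kappa$ open sets. Hence there are at most $2^\kappa$ closed sets.

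Next I will show that every $\kappa$-compact subspace $K\subseteq P(\kappa)$ is closed. The space $2^\kappa$ is Hausdorff. Moreover it is a $P_\kappa$-space, so any intersection of fewer than $\kappa$ open sets is open. Fix $p\notin K$. For each $x\in K$, choose disjoint basic clopen sets $U_x\ni x$ and $V_x\ni p$. By $\kappa$-compactness, fewer than $\kappa$ of the sets $U_x$ cover $K$. The intersection $V$ of the corresponding $V_x$ is then an open neighbourhood of $p$, by the $P_\kappa$ property, and $V$ is disjoint from $K$. So $K$ is closed. This step carries the only real content, and the point to handle carefully is that the $P_\kappa$ property is what makes the intersection of fewer than $\kappa$ neighbourhoods open.

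Finally, each $K_\kappa$-subspace has the form $X=\bigcup_{\xi<\kappa}K_\xi$, where each $K_\xi$ is $\kappa$-compact and hence closed in $P(\kappa)$. So $X$ is determined by a $\kappa$-sequence of closed sets, and the map $(K_\xi:\xi<\kappa)\mapsto\bigcup_\xi K_\xi$ from sequences of closed sets onto $K_\kappa$-subspaces is surjective. The number of $K_\kappa$-subspaces is therefore at most
\[
(2^\kappa)^\kappa=2^{\kappa\cdot\kappa}=2^\kappa.
\]

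Weak compactness is used only to get $\kappa^{<\kappa}=\kappa$, which gives the small base. The fact that $2^\kappa$ itself is $\kappa$-compact is not needed for this bound.
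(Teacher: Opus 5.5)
Your proof is correct. The paper states this lemma without a written proof, but your argument is the one it evidently relies on: $\kappa$-compact subsets of the Hausdorff $P_\kappa$-space $P(\kappa)$ are closed (the paper uses this explicitly in the proofs of Lemma~\ref{lemma16} and Lemma~\ref{lemma18}), the base of size $\kappa$ gives at most $2^\kappa$ closed sets, and so there are at most $(2^\kappa)^\kappa=2^\kappa$ unions of $\kappa$ many of them.
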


\begin{Th}
\label{thm1}
Assume that $\kappa$ is weakly compact. If $2^{\bk} > 2^\kappa$, then there exists a $\kappa$-Hurewicz subspace of $P(\kappa)$ which is not $K_\kappa$.
\end{Th}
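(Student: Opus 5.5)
The proof will be a counting argument that combines Lemma~\ref{lemma5} with Lemma~\ref{lemma6}.

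First, since $\kappa$ is weakly compact, it is strongly inaccessible. Fix a $\bk$-scale $X = \{x_\alpha : \alpha< \bk\}$, which exists in ZFC.

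By Lemma~\ref{lemma5}, for every $Y\subseteq X$ the space $Y\cup \fink$ is a $\kappa$-Hurewicz subspace of $P(\kappa)$. The assignment $Y\mapsto Y\cup\fink$ is injective, because $X\cap \fink = \emptyset$: every $x_\alpha\in[\kappa]^\kappa$ has size $\kappa$. Hence we recover $Y$ as $(Y\cup\fink)\setminus\fink$. The family
\[\mathcal{H} = \{Y\cup \fink : Y\subseteq X\}\]
therefore consists of $|P(X)| = 2^{\bk}$ pairwise distinct $\kappa$-Hurewicz subspaces of $P(\kappa)$.

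Next, by Lemma~\ref{lemma6}, the collection of all $K_\kappa$-subspaces of $P(\kappa)$ has cardinality at most $2^\kappa$. Since $2^{\bk} > 2^\kappa$, the family $\mathcal{H}$ cannot be contained in that collection. So some $Y\cup\fink\in\mathcal{H}$ is not $K_\kappa$, and it is the required $\kappa$-Hurewicz non-$K_\kappa$ subspace.

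Given the two lemmas, there is no real obstacle. The only point to check is the injectivity of $Y\mapsto Y\cup\fink$, and that is immediate from $X\cap\fink=\emptyset$.

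If Lemma~\ref{lemma6} needed justification, I would argue it as follows. In $2^\kappa$ with $\kappa$ weakly compact, every $\kappa$-compact subset is closed, since $2^\kappa$ is a Hausdorff $P_\kappa$-space. A closed set is determined by the basic open sets it misses, and there are $\kappa^{<\kappa}=\kappa$ basic open sets. So there are at most $2^\kappa$ closed sets, and hence at most $(2^\kappa)^\kappa = 2^\kappa$ unions of $\kappa$ many of them.
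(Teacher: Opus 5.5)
Your proposal is correct and follows the paper's proof: you use Lemma~\ref{lemma5} and the injectivity of $Y\mapsto Y\cup\fink$ (from $X\cap\fink=\emptyset$) to get $2^{\bk}$ distinct $\kappa$-Hurewicz subspaces, then Lemma~\ref{lemma6} bounds the $K_\kappa$-subspaces by $2^\kappa$. Your sketch of Lemma~\ref{lemma6}, which the paper states without proof, is also sound: $\kappa$-compact sets are closed in the Hausdorff $P_\kappa$-space $2^\kappa$, there are at most $2^\kappa$ closed sets, and so there are at most $(2^\kappa)^\kappa=2^\kappa$ unions of $\kappa$ of them.
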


\begin{proof}
Let $X = \{x_\alpha : \alpha< \bk\}$ be a $\bk$-scale. By Lemma~\ref{lemma5}, for every $Y\subseteq X$, the space $Y\cup \fink$ is $\kappa$-Hurewicz. Since $X\cap \fink = \emptyset$, the map $Y\mapsto Y\cup \fink$ is injective. Hence there are $2^{\bk}$ distinct $\kappa$-Hurewicz subspaces of $P(\kappa)$ of this form.

On the other hand, by Lemma~\ref{lemma6}, there are at most $2^\kappa$ $K_\kappa$-subspaces of $P(\kappa)$. Since $2^{\bk} > 2^\kappa$, there exists some $Y\subseteq X$ such that $Y\cup \fink$ is $\kappa$-Hurewicz but not $K_\kappa$.
\end{proof}

In particular, the $\kappa$-Hurewicz Conjecture cannot hold under GCH at $\kappa$.

\begin{Cor}
\label{cor2}
Assume that $\kappa$ is weakly compact, and assume GCH at $\kappa$. Then there exists a $\kappa$-Hurewicz subspace of $P(\kappa)$ which is not $K_\kappa$.
\end{Cor}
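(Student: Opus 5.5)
The plan is to reduce Corollary~\ref{cor2} directly to Theorem~\ref{thm1}. That means checking that, under GCH at $\kappa$ (that is, $2^\kappa = \kappa^+$), the hypothesis $2^{\bk} > 2^\kappa$ holds automatically.

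The key input is the fact recalled in Section~\ref{sec-car} that $\kappa < \bk$. Since $\bk$ is a cardinal, this gives $\bk \geq \kappa^+$. Also $\bk \leq 2^\kappa$, since $\bk \leq \dk \leq |\kappa^\kappa| = 2^\kappa$. Under GCH at $\kappa$ these bounds force $\bk = \kappa^+ = 2^\kappa$.

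Cantor's theorem then gives
\[2^{\bk} = 2^{\kappa^+} > \kappa^+ = 2^\kappa.\]
No assumption on $2^{\kappa^+}$ is needed: GCH at $\kappa$ alone suffices, because we only need $2^{\bk} > \bk = 2^\kappa$.

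Finally, since $\kappa$ is weakly compact, Theorem~\ref{thm1} applies. Concretely, fixing a $\bk$-scale $X$, some $Y \subseteq X$ gives a $\kappa$-Hurewicz space $Y\cup\fink$ that is not $K_\kappa$. There is no real obstacle here; the only point needing care is the justification that $\kappa<\bk$, which is taken from the preliminaries.
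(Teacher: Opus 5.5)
Your proposal is correct and is essentially the paper's argument: the corollary is stated directly after Theorem~\ref{thm1} and follows from it because $\kappa<\bk$ gives $2^{\bk}\geq 2^{\kappa^+}>\kappa^+=2^\kappa$ under GCH at $\kappa$. Your additional step showing $\bk\leq 2^\kappa$, and hence $\bk=\kappa^+$, is correct but not needed; the lower bound $\bk\geq\kappa^+$ alone suffices.
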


The preceding results do not settle Problem~\ref{prob7}, but they show that any model in which $\kappa$ is weakly compact and the $\kappa$-Hurewicz Conjecture holds must satisfy $2^{\bk} = 2^\kappa$. In particular, such a model cannot satisfy $2^\kappa = \kappa^+$.

\section{On further problems posed in \cite{ucsp}}
\label{sec-fp}
Finally, we consider the following problems posed in \cite{ucsp}.

\begin{Prob}[{\cite[Problem~8.2]{ucsp}}]
\label{prob8}
Is it consistent with ZFC + $\kappa$ is weakly compact that there is a $\bk$-scale set which is $K_\kappa$?
\end{Prob}

\begin{Prob}[{\cite[Problem~8.3]{ucsp}}]
\label{prob10}
Is there an alternative construction of a $\kappa$-Menger space which is not a $K_\kappa$-space, different from the construction used in the solution of the $\kappa$-Hurewicz Problem?
\end{Prob}

Problem~\ref{prob8} asks whether there is a model of ZFC + $\kappa$ is weakly compact in which there exists a $\bk$-scale $X$ such that the corresponding scale set $X\cup \fink$ is $K_\kappa$. We first point out that Theorem~\ref{thm1} does not settle Problem~\ref{prob8}. Under the assumption $2^{\bk} > 2^\kappa$, the counting argument yields a subset $Y$ of a $\bk$-scale $X$ such that $Y\cup \fink$ is $\kappa$-Hurewicz but not $K_\kappa$. However, this does not imply that the full $\bk$-scale set $X\cup \fink$ fails to be $K_\kappa$. Consequently, Theorem~\ref{thm1} does not imply the existence of a model of ZFC + $\kappa$ is weakly compact in which Problem~\ref{prob8} has a negative answer.

We reformulate Problem~\ref{prob8} as Problem~\ref{prob2} and then derive a necessary tree-theoretic condition for a positive answer; see Theorem~\ref{thm13}. As a consequence, under the additional assumption of the Perfect Subtree Property \cite{hm23}, we obtain a negative answer to Problem~\ref{prob8}; the question under weak compactness alone remains open. To this end, we begin by recording a characterization of the $K_\kappa$ property for sets of the form $X\cup \fink$, where $X\subseteq [\kappa]^\kappa$.

\begin{Lemma}
\label{lemma16}
Let $\kappa$ be a weakly compact cardinal, and let $X\subseteq [\kappa]^\kappa$. Then the following assertions are equivalent.
\begin{enumerate}[wide=0pt,label={\upshape(\arabic*)},leftmargin=*]
\item $X\cup \fink$ is a $K_\kappa$-space.
\item $X$ is a union of $\kappa$ many closed subsets of $[\kappa]^\kappa$.
\end{enumerate}
\end{Lemma}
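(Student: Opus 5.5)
The plan is to prove both implications directly. I will use three facts about $P(\kappa)\cong 2^\kappa$ with the $\kappa$-product topology.

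\begin{enumerate}[wide=0pt,label={\upshape(\roman*)},leftmargin=*]
\item $P(\kappa)$ is $\kappa$-compact because $\kappa$ is weakly compact. Hence every closed subset of $P(\kappa)$ is $\kappa$-compact.
\item $P(\kappa)$ is a Hausdorff $P_\kappa$-space, since $\kappa$ is regular.
\item $|\fink|=\kappa^{<\kappa}=\kappa$, because a weakly compact cardinal is strongly inaccessible.
\end{enumerate}

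I will also use that $P(\kappa)=[\kappa]^\kappa\cup\fink$ is a disjoint union.

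For $(2)\Rightarrow(1)$, write $X=\bigcup_{i<\kappa}C_i$ with each $C_i$ closed in $[\kappa]^\kappa$. Let $\overline{C_i}$ denote the closure of $C_i$ in $P(\kappa)$. Since $C_i$ is closed in the subspace $[\kappa]^\kappa$, we have $\overline{C_i}\cap[\kappa]^\kappa=C_i$, and therefore $\overline{C_i}\subseteq C_i\cup\fink\subseteq X\cup\fink$. By (i), each $\overline{C_i}$ is $\kappa$-compact. Then
\[X\cup\fink=\bigcup_{i<\kappa}\overline{C_i}\cup\bigcup_{q\in\fink}\{q\}.\]
By (iii) this is a union of $\kappa$ many $\kappa$-compact sets. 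Hence $X\cup\fink$ is $K_\kappa$.

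For $(1)\Rightarrow(2)$, write $X\cup\fink=\bigcup_{i<\kappa}K_i$ with each $K_i$ $\kappa$-compact. The key step is to show that every $\kappa$-compact subset $K$ of a Hausdorff $P_\kappa$-space is closed. Fix $y\notin K$. For each $x\in K$, choose disjoint open sets $U_x\ni x$ and $V_x\ni y$. By $\kappa$-compactness, some $F\in[K]^{<\kappa}$ satisfies $K\subseteq\bigcup_{x\in F}U_x$. By (ii), the set $\bigcap_{x\in F}V_x$ is an open neighbourhood of $y$. It is disjoint from $K$, so $y\notin\overline{K}$.

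Applying this in $P(\kappa)$ shows that each $K_i$ is closed in $P(\kappa)$. Hence $K_i\cap[\kappa]^\kappa$ is closed in $[\kappa]^\kappa$. Since $X\subseteq[\kappa]^\kappa$ and $\fink\cap[\kappa]^\kappa=\emptyset$, we obtain
\[X=\bigcup_{i<\kappa}\bigl(K_i\cap[\kappa]^\kappa\bigr),\]
which is $(2)$.

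The only nonroutine point is this closedness argument. It is exactly where regularity of $\kappa$ enters, through the $P_\kappa$ property. The rest of the argument is bookkeeping with the decomposition $P(\kappa)=[\kappa]^\kappa\cup\fink$.
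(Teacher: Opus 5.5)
Your proof is correct and follows essentially the same route as the paper. For $(2)\Rightarrow(1)$ you take the closures $\overline{C_i}$ in $P(\kappa)$, which are $\kappa$-compact and contained in $C_i\cup\fink$. For $(1)\Rightarrow(2)$ you intersect the closed $\kappa$-compact pieces with $[\kappa]^\kappa$.

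The differences are cosmetic. You cover $\fink$ by its $\kappa$ singletons, while the paper uses the $\kappa$-compact sets $P(\alpha)$, $\alpha<\kappa$. You also prove that $\kappa$-compact subsets of a Hausdorff $P_\kappa$-space are closed, which the paper only asserts.
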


\begin{proof}
Assume first that $X\cup \fink = \bigcup_{\xi< \kappa} K_\xi$, where each $K_\xi$ is $\kappa$-compact. Then every $K_\xi$ is closed in $P(\kappa)$. Consequently, $C_\xi = K_\xi \cap [\kappa]^\kappa$ is closed in $[\kappa]^\kappa$ for every $\xi< \kappa$, and $X = \bigcup_{\xi< \kappa} C_\xi$.

Conversely, suppose that $X = \bigcup_{\xi< \kappa} C_\xi$, where every $C_\xi$ is closed in $[\kappa]^\kappa$. For each $\xi< \kappa$, let $F_\xi = \overline{C_\xi}^{P(\kappa)}$. Then $F_\xi \cap [\kappa]^\kappa = C_\xi$, and so $F_\xi \subseteq C_\xi \cup \fink \subseteq X\cup \fink$. Moreover, each $F_\xi$ is $\kappa$-compact. Also since $\kappa$ is regular, no member of $\fink$ is cofinal in $\kappa$, and hence $\fink = \bigcup_{\alpha< \kappa} P(\alpha)$. For every $\alpha< \kappa$, the set $P(\alpha)$ is $\kappa$-compact. It follows that $X\cup \fink = \bigcup_{\xi< \kappa} F_\xi \cup \bigcup_{\alpha< \kappa} P(\alpha)$ is a union of $\kappa$ many $\kappa$-compact subspaces. Thus $X\cup \fink$ is a $K_\kappa$-space.
\end{proof}

Lemma~\ref{lemma16} reformulates Problem~\ref{prob8} as follows.

\begin{Prob}[{\cite[Problem~8.2]{ucsp}}]
\label{prob2}
Is it consistent with ZFC + $\kappa$ is weakly compact that there exists a $\bk$-scale which is a union of $\kappa$ many closed subsets of $[\kappa]^\kappa$?
\end{Prob}

Let $\lambda$ be an infinite cardinal. Recall that a set $X\subseteq P(\kappa)$ is said to be $\lambda$-concentrated on a set $Q\subseteq X$ if for each open set $U\subseteq P(\kappa)$ containing $Q$, we have $|X\setminus U|< \lambda$.

\begin{Lemma}[{\cite[Lemma~3.8]{ucsp}}]
\label{lemma17}
Assume that $\kappa$ is weakly compact. If $X$ is a $\bk$-scale, then $X\cup \fink$ is $\bk$-concentrated on $\fink$.
\end{Lemma}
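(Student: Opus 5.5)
This statement is quoted from \cite[Lemma~3.8]{ucsp}, so the plan below reconstructs its proof from the tools above. Let $X = \{x_\alpha : \alpha< \bk\}$ be a $\bk$-scale and let $U\subseteq P(\kappa)$ be open with $\fink \subseteq U$. We must show $|X\setminus U|< \bk$.

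The plan is to turn $U$ into a single function $c\in\kappa^\kappa$ such that every $x\in[\kappa]^\kappa$ with $c\leq_\kappa^* x$ lies in $U$. Then use the scale to find an index beyond which all $x_\beta$ dominate $c$.

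\textbf{Step 1: compactness.} For each $\gamma<\kappa$, the set $P(\gamma)\subseteq \fink\subseteq U$ is $\kappa$-compact, since it is homeomorphic to $2^\gamma$ with the ${<}\kappa$-box topology, which is a small discrete space when $\kappa$ is inaccessible. A standard tube-lemma argument in $P(\kappa)\cong P(\gamma)\times P(\kappa\setminus\gamma)$ then gives an ordinal $\delta_\gamma\in(\gamma,\kappa)$ with the following property. If $x\cap[\gamma,\delta_\gamma)=\emptyset$, then $x\in U$. Indeed, each $s\in P(\gamma)$ has a basic neighbourhood inside $U$ that is determined by $x\cap\gamma=s$ and $x\cap[\gamma,\delta_s)=\emptyset$ for some $\delta_s$; here we may take the neighbourhood to fix $x\cap\gamma$ exactly, since $|\gamma|<\kappa$. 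Take $\delta_\gamma=\sup_s\delta_s$, which is less than $\kappa$ because $2^{|\gamma|}<\kappa$ and $\kappa$ is regular. Alternatively, Lemma~\ref{lemma1} can be applied to the $\kappa$-cover consisting of all finite unions of such basic sets. Iterating $\gamma\mapsto\delta_\gamma$ and taking sups at limits yields a normal function $a\in\kappa^\kappa$ such that $x\in U$ whenever $x\cap[a(\alpha),a(\alpha+1))=\emptyset$ for some $\alpha$.

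\textbf{Step 2: translate to domination.} Define $c(\xi)=a(\xi+1)$. Suppose $x\in[\kappa]^\kappa$ satisfies $c(\xi)<x(\xi)$ for some $\xi$. Then $x$ has at most $|\xi|$ elements below $a(\xi+1)$. On the other hand, the $\xi+1$ disjoint intervals $[a(\eta),a(\eta+1))$ with $\eta\le\xi$ all lie below $a(\xi+1)$. So, in the manner of the proof of Lemma~\ref{lemma5}, some interval misses $x$, and hence $x\in U$. This pigeonhole needs a little care for finite $\xi$ versus infinite $\xi$; replacing $c(\xi)$ by $a(|\xi|^+)$ as in Lemma~\ref{lemma5} fixes it. We conclude that every $x$ with $c\not\geq x$ at even one coordinate lies in $U$. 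In particular, every $x$ with $c\leq_\kappa^* x$, or merely $x\not\leq_\kappa^* c$, lies in $U$.

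\textbf{Step 3: use the scale.} Since $X$ is unbounded, there is $\alpha<\bk$ with $x_\alpha\not\leq_\kappa^* c$. For $\beta\ge\alpha$ we have $x_\alpha\leq_\kappa^* x_\beta$, so $x_\beta(\xi)\ge x_\alpha(\xi)>c(\xi)$ for $\kappa$ many $\xi$. In particular this holds for at least one $\xi$, so $x_\beta\in U$. Hence $X\setminus U\subseteq\{x_\beta:\beta<\alpha\}$, which has size less than $\bk$.

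The main obstacle is Step 1, namely producing the normal function from an arbitrary open $U\supseteq\fink$. This is where weak compactness, or at least strong inaccessibility, enters, and it is essentially the content of Lemma~\ref{lemma1}. The remaining steps are bookkeeping.
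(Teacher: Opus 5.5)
Your proof is correct, and it differs from the natural argument at the key step. The paper gives no proof of this lemma; it only quotes it from \cite{ucsp}. So the comparison is with the argument the hypothesis suggests. Since $\kappa$ is weakly compact, $K=P(\kappa)\setminus U$ is closed in the $\kappa$-compact space $2^\kappa$, hence $\kappa$-compact. Also $K\subseteq[\kappa]^\kappa$, because $\fink\subseteq U$. Covering $K$ by the increasing open sets $\{x\in[\kappa]^\kappa : x(\xi)<\eta\}$, $\eta<\kappa$, and using regularity, gives $g(\xi)=\sup\{x(\xi):x\in K\}<\kappa$ for every $\xi$. Your Step~3, with $g$ in place of $c$, then finishes.

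You instead bound $P(\kappa)\setminus U$ by hand. You build a normal function $a$ such that every $x$ missing some block $[a(\alpha),a(\alpha+1))$ lies in $U$. This is essentially the tail argument from the proof of Lemma~\ref{lemma5}, applied to a single open set. The compactness route is shorter. Yours uses only $2^{|\gamma|}<\kappa$ and regularity, so it proves the lemma for every strongly inaccessible $\kappa$, not just weakly compact ones.

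Two small remarks. First, the worry in Step~2 is unnecessary. If $x(\xi)\geq a(\xi+1)$, then $x\cap a(\xi+1)\subseteq\{x(\eta):\eta<\xi\}$, which has order type at most $\xi$. If $x$ met all $\xi+1$ blocks $[a(\eta),a(\eta+1))$, $\eta\leq\xi$, their minima would give a strictly increasing map from $\xi+1$ into that set, which is impossible. So $c(\xi)=a(\xi+1)$ works for every $\xi$. It is also strictly increasing, so it is literally a member of $[\kappa]^\kappa$ against which the unboundedness of $X$ can be tested.

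Second, your suggested alternative via Lemma~\ref{lemma1} with finite unions of basic sets does not give a $\kappa$-cover of $\fink$; you would need unions of fewer than $\kappa$ many. Your direct argument makes that alternative superfluous anyway.
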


\begin{Lemma}
\label{lemma18}
Assume that $\kappa$ is weakly compact. If $X$ is a $\bk$-scale, then $X\cup \fink$ contains no subspace homeomorphic to $2^\kappa$.
\end{Lemma}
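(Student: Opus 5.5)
Suppose, towards a contradiction, that $K\subseteq X\cup \fink$ is homeomorphic to $2^\kappa$. The plan is to use Lemma~\ref{lemma17} to show that $K$ must meet $\fink$ in a set whose complement in $K$ has size less than $\bk$, and then find, inside $K$, a copy of $2^\kappa$ contained in $X$. Such a copy has size $2^\kappa\geq \bk$ and lies outside an open set containing $\fink$, which contradicts concentration.

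First I would locate a copy of $2^\kappa$ inside $K\cap X\subseteq [\kappa]^\kappa$. The set $\fink=\bigcup_{\alpha<\kappa}P(\alpha)$ is a union of $\kappa$ many closed ($\kappa$-compact) subsets of $P(\kappa)$. Hence $K\cap \fink$ is a union of $\kappa$ many closed subsets $K\cap P(\alpha)$ of $K$. I would show that each $K\cap P(\alpha)$ is nowhere dense in $K$. Otherwise some basic clopen set $[s]\cap K$, itself homeomorphic to $2^\kappa$, is contained in $P(\alpha)$. But $P(\alpha)$ has size $2^{|\alpha|}<\kappa$, since $\kappa$ is strongly inaccessible, while $[s]\cap K$ has size $2^\kappa$; this is impossible. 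Thus $K\cap\fink$ is $\kappa$-meager in $K\cong 2^\kappa$.

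Next I would build a perfect-tree copy inside the complement. Using the $\kappa$-Baire category theorem for $2^\kappa$, valid since $\kappa^{<\kappa}=\kappa$ and $\kappa$ is regular, I would run the standard fusion of length $\kappa$. This produces a closed subset $P\subseteq K\setminus\fink$ homeomorphic to $2^\kappa$: at stage $\alpha$ one splits every node and avoids the $\alpha$-th nowhere dense set $K\cap P(\alpha)$. At limit stages of cofinality less than $\kappa$, intersections of decreasing clopen sets are nonempty because the space is $\kappa$-closed, being a copy of $2^\kappa$ with the $\kappa$-box topology. So $P\subseteq X$ and $|P|=2^\kappa$.

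Finally, I would derive the contradiction. The set $P$ is $\kappa$-compact, being closed in the $\kappa$-compact $K$ (weak compactness), and hence closed in $P(\kappa)$. Therefore $U=P(\kappa)\setminus P$ is open and contains $\fink$. By Lemma~\ref{lemma17}, $|(X\cup\fink)\setminus U|<\bk$. But $(X\cup\fink)\setminus U=P$, which has size $2^\kappa\ge \bk$, since $\bk\le\dk\le 2^\kappa$. This is a contradiction. The main obstacle I expect is the fusion step: carrying out the $\kappa$-length perfect-set construction while avoiding $\kappa$ many nowhere dense sets and keeping the limit stages nonempty. Here regularity of $\kappa$ and the $P_\kappa$/$\kappa$-closedness of $2^\kappa$ must be used carefully.
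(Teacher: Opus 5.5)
Your argument is correct. Its outer frame is the same as the paper's: find a closed copy $P$ of $2^\kappa$ inside $K\setminus\fink$, note that $P$ is $\kappa$-compact and hence closed in the Hausdorff $P_\kappa$-space $P(\kappa)$, and contradict Lemma~\ref{lemma17} using the open set $P(\kappa)\setminus P\supseteq\fink$.

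The difference lies in how the copy avoiding $\fink$ is produced.

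\textbf{The paper's route.} It uses only that $|\fink|=\kappa$. It pulls back the at most $\kappa$ points of $K\cap\fink$ to points $z_\xi\in 2^\kappa$, $\xi<\lambda\le\kappa$. It gives each one a private coordinate $\gamma_\xi$ from a set $B$ with $|\kappa\setminus B|=\kappa$, and takes
\[D=\{z\in 2^\kappa : z(\gamma_\xi)\neq z_\xi(\gamma_\xi)\text{ for all }\xi<\lambda\}.\]
This set is visibly closed and homeomorphic to $2^{\kappa\setminus\{\gamma_\xi:\xi<\lambda\}}\cong 2^\kappa$, so no recursion is needed at all.

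\textbf{Your route.} You show that $K\cap\fink$ is $\kappa$-meager in $K$ and run a $\kappa$-length fusion to get a perfect closed set in its complement. This works. At limit stages of length less than $\kappa$, regularity of $\kappa$ gives that $\bigcap_\beta[s_\beta]=[\bigcup_\beta s_\beta]$ is again a nonempty basic clopen set. You should also state explicitly that the lengths of the nodes are cofinal in $\kappa$, so that each branch determines a single point.

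\textbf{Comparison.} Your fusion proves something stronger: any $\kappa$-meager subset of $K$ can be avoided, not just a set of at most $\kappa$ points. For the lemma at hand, however, it is heavier than the paper's one-line coordinate trick. Separately, your opening sentence of the plan (that concentration makes $K\setminus\fink$ small) is not what Lemma~\ref{lemma17} says, but the argument you actually carry out never uses it.
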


\begin{proof}
Suppose, towards a contradiction, that $C\subseteq X\cup \fink$ is homeomorphic to $2^\kappa$. Let $\Phi: 2^\kappa \to C$ be a homeomorphism. Since $|\fink| = \kappa$, we have $|\Phi^{-1}[C\cap \fink]| \leq \kappa$. We claim that there is a closed set $D\subseteq 2^\kappa \setminus \Phi^{-1}[C\cap \fink]$ homeomorphic to $2^\kappa$. To see this, enumerate $\Phi^{-1}[C\cap \fink] = \{z_\xi : \xi< \lambda\}$, $\lambda \leq \kappa$. Choose a set $B\subseteq \kappa$ such that $|B| = |\kappa \setminus B| = \kappa$, and choose pairwise distinct coordinates $\gamma_\xi \in B$ for every $\xi< \lambda$. Define \[D = \left\{z\in 2^\kappa : z(\gamma_\xi) \neq z_\xi(\gamma_\xi) \text{ for every } \xi< \lambda\right\}.\] Then $D$ is closed. Moreover, $z_\xi \notin D$ for every $\xi< \lambda$, and hence $D\cap \Phi^{-1}[C\cap \fink] = \emptyset$. Since the coordinates in $\kappa \setminus B$ are unrestricted and $|\kappa \setminus B| = \kappa$, $D$ is homeomorphic to $2^\kappa$.

The set $\Phi[D]$ is homeomorphic to $2^\kappa$, and therefore is $\kappa$-compact. Thus $\Phi[D]$ is closed in $P(\kappa)$. Moreover, $\Phi[D] \cap \fink = \emptyset$. Consequently, $U = P(\kappa) \setminus \Phi[D]$ is an open set containing $\fink$. Since $X\cup \fink$ is $\bk$-concentrated on $\fink$ (see Lemma~\ref{lemma17}), $|(X\cup \fink) \setminus U|< \bk$. On the other hand, $\Phi[D] \subseteq (X\cup \fink) \setminus U$ and $|\Phi[D]| = 2^\kappa \geq \bk$, a contradiction. Therefore, $X\cup \fink$ contains no subspace homeomorphic to $2^\kappa$.
\end{proof}

We can now obtain a tree-theoretic obstruction to a positive answer to Problem~\ref{prob8}. Recall that a $\kappa$-tree is a tree of height $\kappa$ whose levels have size less than $\kappa$.

\begin{Th}
\label{thm13}
Assume that $\kappa$ is weakly compact. If $X = \{x_\alpha : \alpha< \bk\}$ is a $\bk$-scale such that $X\cup \fink$ is a $K_\kappa$-space, then there exists a $\kappa$-tree having exactly $\bk$ many cofinal branches and having no perfect subtree.
\end{Th}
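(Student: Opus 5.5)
By Lemma~\ref{lemma16}, the hypothesis that $X\cup \fink$ is $K_\kappa$ gives $X=\bigcup_{\xi<\kappa} C_\xi$ with each $C_\xi$ closed in $[\kappa]^\kappa$. Since $|X|=\bk$, $\bk>\kappa$ and $\bk$ is regular, some $C=C_\xi$ has $|C|=\bk$; fix it. Viewing $P(\kappa)$ as $2^\kappa$, I let $F=\overline{C}^{P(\kappa)}$. As in the proof of Lemma~\ref{lemma16}, $F\cap[\kappa]^\kappa=C$ and $F\subseteq C\cup\fink\subseteq X\cup\fink$. The tree will be
\[T=\{x\restriction\alpha : x\in F,\ \alpha<\kappa\}\subseteq 2^{<\kappa},\]
ordered by end-extension.

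First I check that $T$ is a $\kappa$-tree. Its height is $\kappa$ because $F\neq\emptyset$. Its $\alpha$-th level has size at most $2^{|\alpha|}<\kappa$, since a weakly compact $\kappa$ is strongly inaccessible.

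Next I count cofinal branches. Since $2^\kappa$ carries the $\kappa$-product topology, $b\in 2^\kappa$ is a cofinal branch of $T$ iff every basic neighbourhood $[b\restriction\alpha]$ meets $F$, i.e.\ iff $b\in\overline F=F$. So the cofinal branches of $T$ correspond exactly to the points of $F$. Hence their number is at least $|C|=\bk$ and at most $|C|+|\fink|=\bk+\kappa=\bk$, so it is exactly $\bk$.

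Finally I show $T$ has no perfect subtree, using the notion of \cite{hm23}: a subtree $S\subseteq T$ whose splitting nodes are cofinal above every node and which is closed under unions of increasing chains of length $<\kappa$. Suppose such an $S$ exists. I define $e:2^{<\kappa}\to S$ by recursion on the length of $s$. I set $e(\emptyset)$ to a splitting node of $S$. Given $e(s)$, I let $e(s^\frown i)$, for $i=0,1$, be splitting nodes of $S$ above the two distinct immediate successors of $e(s)$ in $S$. At a limit $s$, I let $e(s)$ be a splitting node above $\bigcup_{\beta<\operatorname{dom}(s)}e(s\restriction\beta)$, which lies in $S$ by the closure property. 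The map $z\mapsto\bigcup_{\alpha<\kappa}e(z\restriction\alpha)$ is then injective. It is continuous and open onto its image, since the basic sets $[s]$ correspond to $[e(s)]$ intersected with the image. Each image point is a cofinal branch of $T$, hence lies in $F$. Thus $F\subseteq X\cup\fink$ contains a homeomorphic copy of $2^\kappa$, contradicting Lemma~\ref{lemma18}.

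The main obstacle is this last step. There I must fix the precise definition of perfect subtree from \cite{hm23} and verify the homeomorphism: continuity of the inverse needs that $e$ respects the basic open sets $[s]$ for $s$ of every length $<\kappa$, which is exactly what the limit-closure clause provides. The other steps are routine.
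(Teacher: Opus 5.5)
Your proposal is correct and follows essentially the same route as the paper. You use Lemma~\ref{lemma16} to get a closed piece $C_\xi$ of size $\bk$, take the tree of initial segments of $F=\overline{C_\xi}^{P(\kappa)}$ with $[T]=F$, and embed $2^\kappa$ into $F$ via a perfect subtree to contradict Lemma~\ref{lemma18}. Your bound $|F|\le \bk+|\fink|=\bk$ and your choice of splitting nodes at every stage (which yields $\dom(e(s))\ge\dom(s)$ and incompatibility at the splits) only make explicit details the paper leaves implicit.
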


\begin{proof}
By Lemma~\ref{lemma16}, there are closed subsets $C_\xi \subseteq [\kappa]^\kappa$, $\xi< \kappa$, such that $X = \bigcup_{\xi< \kappa} C_\xi$. Since $\bk$ is regular and $\kappa< \bk$, there exists $\xi< \kappa$ such that $|C_\xi| = \bk$. Fix such a $C_\xi$ and put $F = \overline{C_\xi}^{P(\kappa)}$. Then $F\subseteq C_\xi\cup Q_\kappa \subseteq X\cup Q_\kappa$ and  $|F| = \bk$. Associate with $F$ the tree \[T_F = \left\{x\restriction \alpha : x\in F,\ \alpha< \kappa\right\} \subseteq 2^{<\kappa},\] where the members of $P(\kappa)$ are identified with their characteristic functions. Since $\kappa$ is strongly inaccessible, $|(T_F)_\alpha| \leq 2^{|\alpha|} < \kappa$ for every $\alpha< \kappa$. Hence $T_F$ is a $\kappa$-tree. Since $F$ is closed in $2^\kappa$, the set \[[T_F] = \{x\in 2^\kappa : x\restriction \alpha \in T_F \text{ for every } \alpha< \kappa\}\] of cofinal branches through $T_F$ is precisely $F$. Thus $|[T_F]| = |F| = \bk$.

We claim that $T_F$ has no perfect subtree. Suppose, to the contrary, that $S\subseteq T_F$ is a perfect subtree. Using the splitting property of $S$, recursively choose $\{s_u : u\in 2^{<\kappa}\} \subseteq S$ such that $u\subseteq v$ implies $s_u\subseteq s_v$, $s_{u^\frown 0} \perp s_{u^\frown 1}$ for every $u\in 2^{<\kappa}$, and $\dom(s_u) \geq \dom(u)$ for every $u\in 2^{<\kappa}$. At limit stages of length less than $\kappa$, the required nodes are obtained by taking unions, using the $<\kappa$-closure of $S$.

Define $\Phi: 2^\kappa \to [S]$ by $\Phi(x) = \bigcup_{\alpha< \kappa} s_{x\restriction \alpha}$ for all $x\in 2^\kappa$. Since $\Phi$ is injective, we may regard it as a bijection $\Phi: 2^\kappa \to \Phi[2^\kappa]$. For $u\in 2^{<\kappa}$, let $[u] = \{x\in 2^\kappa : u\subseteq x\}$. We have $\Phi[[u]] = [s_u] \cap \Phi[2^\kappa]$. Indeed, if $x\in [u]$, then $s_u\subseteq \Phi(x)$. Conversely, if $x\notin [u]$, let $\delta< \dom(u)$ be the first coordinate at which $x$ and $u$ differ. Then the nodes corresponding to $u\restriction (\delta+1)$ and $x\restriction (\delta+1)$ are incompatible. Hence $s_u \nsubseteq \Phi(x)$, and therefore $\Phi(x) \notin [s_u]$. It follows that $\Phi^{-1}: h\to 2^\kappa$ is continuous.

To see that $\Phi$ is continuous, let $x\in 2^\kappa$ and let $[\Phi(x)\restriction \beta]$, $\beta< \kappa$, be a basic neighbourhood of $\Phi(x)$. Since the domains of the nodes $s_{x\restriction \alpha}$, $\alpha< \kappa$, are cofinal in $\kappa$, there exists $\alpha< \kappa$ such that $\dom(s_{x\restriction \alpha}) \geq \beta$. Whenever $y\restriction \alpha = x\restriction \alpha$, we have $s_{x\restriction \alpha} \subseteq \Phi(y)$, and consequently $\Phi(y)\restriction \beta = \Phi(x)\restriction \beta$. Therefore, $\Phi[[x\restriction \alpha]] \subseteq [\Phi(x)\restriction \beta]$. Thus $\Phi$ is continuous. Hence $\Phi$ is a homeomorphism of $2^\kappa$ onto $\Phi[2^\kappa] \subseteq [S]$. Therefore, $[S]$ contains a subspace homeomorphic to $2^\kappa$. Since $S\subseteq T_F$, $[S] \subseteq [T_F] = F$. Thus $F$ contains a subspace homeomorphic to $2^\kappa$. But $F\subseteq X\cup \fink$, contradicting Lemma~\ref{lemma18}. Therefore, $T_F$ has no perfect subtree.
\end{proof}

The preceding theorem connects Problem~\ref{prob8} with the Perfect Subtree Property. Recall that the Perfect Subtree Property for $\kappa$ \cite{hm23} asserts that every $\kappa$-tree with at least $\kappa^+$ many cofinal branches has a perfect subtree.

\begin{Cor}
\label{cor9}
Assume that $\kappa$ is weakly compact and satisfies the Perfect Subtree Property. Then no $\bk$-scale set is a $K_\kappa$-space.
\end{Cor}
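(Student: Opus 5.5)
Argue by contradiction. Suppose $\kappa$ is weakly compact, satisfies the Perfect Subtree Property, and some $\bk$-scale set $X\cup\fink$ is a $K_\kappa$-space, where $X=\{x_\alpha:\alpha<\bk\}$ is a $\bk$-scale. Theorem~\ref{thm13} then gives a $\kappa$-tree $T$ with exactly $\bk$ many cofinal branches and no perfect subtree. Recall that $\kappa<\bk$, so $\bk\geq\kappa^+$. Hence $T$ is a $\kappa$-tree with at least $\kappa^+$ many cofinal branches. The Perfect Subtree Property for $\kappa$ then says $T$ has a perfect subtree, which is a contradiction. So no $\bk$-scale set is $K_\kappa$.

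There are two small points to check. The first is that the notion of ``perfect subtree'' used in the proof of Theorem~\ref{thm13} matches the one in \cite{hm23}: a $<\kappa$-closed subtree in which every node has splitting extensions. Those are exactly the two properties the proof of Theorem~\ref{thm13} uses. The second is that the inequality $\kappa<\bk$ holds for regular $\kappa$. The paper records this in Section~\ref{sec-car}, and weakly compact cardinals are regular. No further obstacle is expected; the corollary is essentially immediate from Theorem~\ref{thm13}.

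If one prefers to avoid quoting Theorem~\ref{thm13} as a black box, the same argument can be run directly. Use Lemma~\ref{lemma16} to write $X=\bigcup_{\xi<\kappa}C_\xi$ with each $C_\xi$ closed in $[\kappa]^\kappa$. By the regularity of $\bk>\kappa$, pick $\xi$ with $|C_\xi|=\bk$. Form the tree $T_F$ of $F=\overline{C_\xi}^{P(\kappa)}$; it has $\bk\ge\kappa^+$ branches. Apply the Perfect Subtree Property to obtain a perfect subtree. This yields a copy of $2^\kappa$ inside $F\subseteq X\cup\fink$, contradicting Lemma~\ref{lemma18}.
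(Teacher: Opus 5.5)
Your argument is correct and is exactly the paper's proof: Theorem~\ref{thm13} gives a $\kappa$-tree with $\mathfrak{b}_\kappa\geq\kappa^+$ cofinal branches and no perfect subtree, which contradicts the Perfect Subtree Property. Your unfolded variant just inlines the proof of Theorem~\ref{thm13} (via Lemmas~\ref{lemma16} and~\ref{lemma18}) and is not a different route.
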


\begin{proof}
Suppose that $X$ is a $\bk$-scale and that $X\cup \fink$ is a $K_\kappa$-space. By Theorem~\ref{thm13}, there is a $\kappa$-tree with $\bk$ many cofinal branches and no perfect subtree. Since $\bk> \kappa$, the tree has at least $\kappa^+$ many cofinal branches. This contradicts the Perfect Subtree Property.
\end{proof}

Theorem~\ref{thm13} yields a necessary condition for a positive answer to Problem~\ref{prob8}: in any model in which a $\bk$-scale set is a $K_\kappa$-space, there must exist a $\kappa$-tree with exactly $\bk$ many cofinal branches and no perfect subtree.

\begin{Rem}
\label{rem6}
The additional hypothesis in Corollary~\ref{cor9} is substantial. Hayut and M\"{u}ller \cite{hm23} studied the Perfect Subtree Property at weakly compact cardinals and showed that it has considerable consistency strength.
\end{Rem}

\begin{Cor}
\label{cor10}
Assume that $\kappa$ is weakly compact and satisfies the Perfect Subtree Property. Then there exists a $\kappa$-Menger subspace of $2^\kappa$ which is not a $K_\kappa$-space. In fact, every $\mathfrak b_\kappa$-scale set has this property.
\end{Cor}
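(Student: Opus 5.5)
The plan is to combine Corollary~\ref{cor9} with the known $\kappa$-Menger (indeed $\kappa$-Hurewicz) property of $\bk$-scale sets. Fix any $\bk$-scale $X=\{x_\alpha:\alpha<\bk\}\subseteq[\kappa]^\kappa$; such scales exist in ZFC, as recalled in Section~\ref{sec-car}. Consider the $\bk$-scale set $X\cup\fink\subseteq P(\kappa)$. Via the identification of subsets of $\kappa$ with their characteristic functions, we regard it as a subspace of $2^\kappa$. It therefore suffices to show that every such set is $\kappa$-Menger and is not $K_\kappa$; the existence statement then follows from the existence of a $\bk$-scale.

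For the $\kappa$-Menger part, apply Lemma~\ref{lemma5} with $Y=X$. A weakly compact cardinal is strongly inaccessible, so the lemma applies and shows that $X\cup\fink$ is $\kappa$-Hurewicz. Since $\kappa$ is regular, the basic implication diagram (Figure~\ref{bd}), namely $\kappa$-Hurewicz $\rightarrow$ $\kappa$-Scheepers $\rightarrow$ $\kappa$-Menger, gives that $X\cup\fink$ is $\kappa$-Menger. If one wants to make this explicit: $\Uk(\mathcal{O}_\kappa,\Gamma_\kappa)$ implies $\Uk(\mathcal{O}_\kappa,\Omega_\kappa)$ by Remark~\ref{rem7}(2). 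Then $\Uk(\mathcal{O}_\kappa,\Omega_\kappa)$ implies $\Sk(\mathcal{O}_\kappa,\mathcal{O}_\kappa)$ by the proposition stated just before Remark~\ref{rem7}.

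For the failure of $K_\kappa$, invoke Corollary~\ref{cor9} directly. Under weak compactness of $\kappa$ together with the Perfect Subtree Property, no $\bk$-scale set is a $K_\kappa$-space. Hence $X\cup\fink$ is not $K_\kappa$.

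There is no real obstacle, since both ingredients are already established. The only points needing care are bookkeeping. First, the hypotheses of Lemma~\ref{lemma5} (strong inaccessibility) follow from weak compactness. Second, the regularity needed for the implication chain in Figure~\ref{bd} holds. Third, the statement is phrased for subspaces of $2^\kappa$, while the earlier results are phrased in $P(\kappa)$; this is harmless because the identification of $P(\kappa)$ with $2^\kappa$ is a homeomorphism by definition of the topology on $P(\kappa)$.
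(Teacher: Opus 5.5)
Your proof is correct and matches the paper's. In both, the scale set is $\kappa$-Hurewicz and hence $\kappa$-Menger, and Corollary~\ref{cor9} rules out $K_\kappa$. The only cosmetic difference is that you obtain the $\kappa$-Hurewicz property from Lemma~\ref{lemma5} with $Y=X$, whereas the paper cites \cite[Theorem~3.6]{ucsp} directly (Lemma~\ref{lemma5} is a strengthening of that result).
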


\begin{proof}
Let $X\subseteq [\kappa]^\kappa$ be a $\bk$-scale. By \cite[Theorem~3.6]{ucsp}, the $\bk$-scale set $X\cup \fink$ is $\kappa$-Hurewicz, and hence it is $\kappa$-Menger. On the other hand, by Corollary~\ref{cor9}, the Perfect Subtree Property implies that no $\bk$-scale set is a $K_\kappa$-space. Consequently, $X\cup \fink$ is $\kappa$-Menger but not $K_\kappa$.
\end{proof}

Thus under the Perfect Subtree Property, every $\bk$-scale set provides an alternative example of a $\kappa$-Menger space which is not a $K_\kappa$-space. In particular, this gives a positive answer to Problem~\ref{prob10} under this additional assumption. Notice that no new scale set is constructed here; rather, the Perfect Subtree Property ensures that every $\bk$-scale set fails to be a $K_\kappa$-space. More importantly, the examples obtained in this way are $\kappa$-Hurewicz, whereas the example constructed in \cite[Theorem~4.4]{ucsp} is $\kappa$-Menger but not $\kappa$-Hurewicz. Thus the failure of the $K_\kappa$ property in Corollary~\ref{cor10} does not follow from a failure of the $\kappa$-Hurewicz property.

%
%
%
%

\end{document}